\documentclass[11pt]{amsart}
\usepackage[T1]{fontenc}
\usepackage{lmodern}
\usepackage{amssymb,mathtools,booktabs,array}
\usepackage[margin=1.12in]{geometry}
\usepackage[expansion=false]{microtype}
\usepackage{xurl}
\usepackage{xcolor}
\usepackage[colorlinks=true,linkcolor=blue!45!black,citecolor=blue!45!black,urlcolor=blue!45!black]{hyperref}

\newtheorem{theorem}{Theorem}[section]
\newtheorem{lemma}[theorem]{Lemma}
\newtheorem{proposition}[theorem]{Proposition}
\newtheorem{corollary}[theorem]{Corollary}
\theoremstyle{remark}

\numberwithin{equation}{section}
\DeclareMathOperator{\sinc}{sinc}
\DeclareMathOperator{\exprel}{exprel}
\DeclareMathOperator{\Si}{Si}
\DeclareMathOperator{\Ci}{Ci}
\DeclareMathOperator{\Arg}{Arg}
\newcommand{\D}{\mathbb D}
\newcommand{\R}{\mathbb R}
\newcommand{\E}{\mathbb E}
\newcommand{\ii}{\mathrm i}
\newcommand{\dd}{\,\mathrm d}
\newcommand{\ga}{\Gamma}
\newcommand{\eps}{\varepsilon}
\newcommand{\file}[1]{{\small\path{#1}}}
\hypersetup{
 pdftitle={A proof of the Koumandos--Ruscheweyh conjecture},
 pdfauthor={Yicen Ma},
 pdfsubject={Positive trigonometric sums and sharp sector subordination},
 pdfkeywords={Koumandos--Ruscheweyh conjecture, computer-assisted proof, subordination}
}
\AddToHook{env/thebibliography/before}{\clearpage}
\allowdisplaybreaks[2]
\title[A proof of the Koumandos--Ruscheweyh conjecture]{A proof of the Koumandos--Ruscheweyh conjecture}
\author{Yicen Ma}
\address{College of Chemistry, Zhejiang University, Hangzhou, China}
\date{October 5, 2026 (Version 2)}
\subjclass[2020]{Primary 42A05, 30C45; Secondary 26D05, 33B15, 33C45, 65G20}
\keywords{Koumandos--Ruscheweyh conjecture, positive trigonometric sums,
subordination, gamma function, computer-assisted proof}

\begin{document}
\begin{abstract}
We prove the Koumandos--Ruscheweyh conjecture for every
$0<\rho\leq1$. If $\nu(\rho)$ is the unique root in $(0,1]$ of
$\int_0^{(1+\rho)\pi}t^{\mu-1}\sin(t-\rho\pi)\dd t=0$, then
$(1-z)^\rho s_n^\mu(z)\prec((1+z)/(1-z))^\rho$ for every $n\geq0$,
$0<\mu\leq\nu(\rho)$ and $z\in\mathbb D$, where
$s_n^\mu(z)=\sum_{k=0}^n(\mu)_kz^k/k!$.
The parameter $\nu(\rho)$ is optimal. The proof combines an
all-parameter gamma-coefficient comparison with an exact beta integral
and a binomial variance estimate, reducing the infinitely many degrees
to finitely many continuous interval inequalities. The remaining
inequalities are certified by 256-bit ball arithmetic; rational covers,
source code and alternative-formula verifiers are provided.
The weak positive-real-part conjecture follows as a sharp corollary.
We also derive sharp consequences for starlike functions and
Gegenbauer polynomial sections: a full-parameter convolution
subordination, the optimal starlike order for uniform partial-sum
sectors, and the optimal Gegenbauer parameter and sector angle.
The necessary bounds and angular sharpness are obtained from explicit
kernels and interior scaling limits.
\end{abstract}
\maketitle

\section{Introduction and main result}

For $\mu>0$ put
\[
 a_k(\mu)=\frac{(\mu)_k}{k!},\qquad
 s_n^\mu(z)=\sum_{k=0}^n a_k(\mu)z^k,\qquad
 \D=\{z\in\mathbb C:|z|<1\}.
\]
Here $(\mu)_0=1$ and $(\mu)_k=\mu(\mu+1)\cdots(\mu+k-1)$.
For $0<\rho\leq1$ define
\begin{equation}\label{eq:I}
 I(\mu,\rho)=\int_0^{(1+\rho)\pi}
                 t^{\mu-1}\sin(t-\rho\pi)\dd t,
 \qquad \nu(\rho)=\mu^*(\rho),
\end{equation}
where $\nu(\rho)$ is the unique zero of $I(\cdot,\rho)$ in $(0,1]$.
Existence and uniqueness will also be proved below.
All powers of $1-z$ are taken in the analytic branch equal to $1$ at
$z=0$.

Koumandos and Ruscheweyh \cite{KR} conjectured that $\nu(\rho)$ is
the largest parameter for which
\begin{equation}\label{eq:strong}
 (1-z)^\rho s_n^\mu(z)
 \prec \left(\frac{1+z}{1-z}\right)^\rho
\end{equation}
holds for every degree and every smaller positive $\mu$.
For analytic functions, $f\prec g$ means $f=g\circ\omega$ for an analytic
map $\omega:\D\to\D$ with $\omega(0)=0$.
The right-hand side of \eqref{eq:strong} maps $\D$ conformally onto
the sector $|\arg w|<\rho\pi/2$.

The case $\rho=1/2$ was established in \cite{KR}, and the case
$\rho=1/4$ in \cite{KL}. Sangal and Swaminathan \cite{SS}
established the strong conjecture in a neighborhood of $\rho=1/3$.
These works use positivity of trigonometric sums and comparisons
between gamma coefficients and oscillatory integrals.
The present proof treats the entire interval of parameters.

\begin{theorem}\label{thm:main}
Let $0<\rho\leq1$. For every integer $n\geq0$, every
$0<\mu\leq\nu(\rho)$, and every $z\in\D$, \eqref{eq:strong} holds.
Equivalently,
\[
 \left|\arg\bigl((1-z)^\rho s_n^\mu(z)\bigr)\right|
       <\frac{\rho\pi}{2}.
\]
For each fixed $\rho$, every $\mu>\nu(\rho)$ violates
\eqref{eq:strong} for some $n$ and some $z\in\D$.
\end{theorem}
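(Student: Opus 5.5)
We begin by reducing the subordination to a boundary statement. Since $(1-z)^\rho s_n^\mu(z)$ is analytic in a neighbourhood of $\overline{\D}\setminus\{1\}$ and equals $1$ at $z=0$, the minimum principle for the harmonic function $\arg$ (or for $\Re$ of a suitable power) reduces \eqref{eq:strong} to the inequality $|\arg((1-e^{\ii\theta})^\rho s_n^\mu(e^{\ii\theta}))|\le\rho\pi/2$ for $0<\theta<2\pi$, with strictness in the interior following from the open mapping theorem. We then write $1-e^{\ii\theta}=2\sin(\theta/2)e^{\ii(\theta-\pi)/2}$. By the standard Koumandos--Ruscheweyh reformulation, the boundary inequality is equivalent to the pair of trigonometric inequalities
\[
 \sum_{k=0}^n a_k(\mu)\sin\bigl(k\theta+\rho(\pi-\theta)/2\cdot 2\bigr)\ \gtrless\ 0
\]
in the appropriately phase-shifted form, i.e.\ to the positivity of
\[
 T_n(\theta)=\sum_{k=0}^n a_k(\mu)\sin\bigl((k+\rho/2)\theta\bigr)\cdot(\ldots)
\]
on $(0,\pi)$. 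By conjugate symmetry, only $0<\theta<\pi$ is needed. We also use monotonicity in $\mu$: since $a_k(\mu)/a_k(\mu')$ is monotone in $k$, summation by parts shows it suffices to treat $\mu=\nu(\rho)$.

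For the main inequality at $\mu=\nu(\rho)$, we split $a_k(\mu)=k^{\mu-1}/\ga(\mu)+r_k$. The model sum $\sum_k k^{\mu-1}\sin(k\theta-\rho\pi+\ldots)$ is compared with the integral $\theta^{-\mu}\int_0^{x}t^{\mu-1}\sin(t-\rho\pi)\dd t$, where $x=(n+\tfrac12)\theta$. The defining equation $I(\nu,\rho)=0$ is precisely the statement that this integral is nonnegative for all $x$, with equality at the critical point $x=(1+\rho)\pi$. Existence and uniqueness of $\nu(\rho)$ is proved first, from the sign change of $I(\cdot,\rho)$ on $(0,1]$ and the strict monotonicity of $\mu\mapsto I(\mu,\rho)$. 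Monotonicity follows after rescaling $t$ to $(1+\rho)\pi s$, because the negative part of the sine is concentrated at small $t$. The error terms are: the Euler--Maclaurin/gamma-ratio defect $r_k$, which we control uniformly in $\mu$ and $\rho$ by an exact beta-integral representation $a_k(\mu)=\frac{1}{\ga(\mu)\ga(1-\mu)}\int_0^1 s^{k+\mu-1}(1-s)^{-\mu}\dd s$; and the discretisation error between sum and integral, which is $O(\theta^{1-\mu})$ relative to a main term of size $\theta^{-\mu}$. For $n\ge N_0$ and $\theta$ not too small, these errors are dominated. Small $\theta$ (with $n\theta$ bounded) is handled by the scaling limit, and there the integral positivity gives a margin except near $x=(1+\rho)\pi$. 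Near that point we use a second-order (variance) estimate: the binomial-type averaging of $k^{\mu-1}$ against the beta density produces a strictly positive correction.

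The remaining finitely many degrees $n<N_0$ and the compact parameter region reduce to continuous inequalities in $(\theta,\rho)$ on boxes. We certify these with interval (ball) arithmetic, with a separate Taylor treatment at the degenerate endpoints $\theta\to0$ and $\rho\to0$. For sharpness, we take $\mu>\nu(\rho)$, so that $I(\mu,\rho)<0$. We set $\theta=x/n$ with $x=(1+\rho)\pi$ and let $n\to\infty$. The scaling limit $n^{-\mu}\ga(\mu)(\ldots)\to\int_0^x t^{\mu-1}\sin(t-\rho\pi)\dd t<0$ shows that the argument exceeds $\rho\pi/2$ at a boundary point, and hence by continuity at nearby points of $\D$. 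The main obstacle is the uniform error analysis near the double-zero point $x=(1+\rho)\pi$ as $\rho$ ranges over all of $(0,1]$. Near $\rho\to0$ in particular, $\nu(\rho)\to0$ and the constants degenerate, and I expect this to need a separate asymptotic argument before the interval certification can close.
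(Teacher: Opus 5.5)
The decisive step, large-degree positivity at the critical scale $2n\phi\approx a=(1+\rho)\pi$, is asserted rather than proved. You flag it yourself as the main obstacle, but it is the heart of the theorem.

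Your size count (discretisation error $O(\theta^{1-\mu})$ against a main term of size $\theta^{-\mu}$) is what pushes you to a scaling limit. At the critical scale the comparison integral vanishes to first order (in the paper, $J(a)=-D\sin(\rho\phi)$). So the ``errors'' there are of the same order as what remains and decide the sign; they are never dominated, for any $n$. The variance estimate does not rescue this as stated, since \eqref{eq:variance} only bounds $|W_n-C|$, and that is useless at $x=a$, where $C(a)=0$.

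To get a positive correction you must compute the exact $1/n$ term. One finds $W_n(a)=\tfrac12\nu a^{1-\nu}(\nu-\rho)D\,n^{-1}+O(n^{-2})$: the phase shift $-\rho x/(2n)$ contributes $-\rho D$ and the binomial variance $+\nu D$, so the sign rests on $\nu>\rho$. You would still need explicit $O(n^{-2})$ remainders uniform for $x$ near $a$, a quantitative matching with the large-$x$ regime, and uniformity in $\rho$. Only then is there an explicit $N_0$ for the finite computation.

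The paper avoids asymptotics altogether:
Proposition~\ref{prop:comparison} holds for every $n\ge1$ and $0<\phi\le\pi/2$. It keeps the next-order terms exactly, namely the gamma term and the half-integer endpoint in $J((2n+1)\phi)$, and leaves explicit errors decreasing in $n$ and $2n\phi$. This yields $n$-free envelopes $G_R,G_L$ for $n\ge N$, whose positivity is a continuous inequality in $(\rho,\phi)$. At the critical scale it is driven by $\ga(\nu)\sigma\cos(\delta+\sigma\phi/2)\sinc(\sigma\phi/2)/(pu)$ with $\sigma=\nu-\rho>0$. In the positivity proof the variance bound is used only before the critical scale (Proposition~\ref{prop:precritical}), where $C$ has a definite margin.

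Second, you place the degeneration at $\rho\to0$, but the genuinely singular end is $\rho\to1$, and your scheme has no mechanism for it. There $p=1-\nu\asymp\eps^2$ (Lemma~\ref{lem:upperroot}). At $\rho=1$ both critical sums equal $\sin^2((n+1)\phi)/\sin\phi$, with zeros at every $\phi=m\pi/(n+1)$, and $H(x)=1-\cos x$. So near $\rho=1$ there is no isolated double zero at $x=a$: the later minima of $H$ also tend to $0$, and the positive coefficient above is $O(\eps^2)$.

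Meanwhile $W_n-C$ stays of exact order $1/n$ near $x=a$ even at $\rho=1$, where the zero sits at $2n\phi=a-2\pi/(n+1)$. Any threshold derived from remainder bounds that do not vanish at $\rho=1$ therefore blows up as $\rho\to1$, leaving no compact region to certify. The paper copes because every error in Proposition~\ref{prop:comparison} carries the factor $p$, so the margin $pG_R$ and the errors scale together. This is combined with $c_-\eps^2<p<c_+\eps^2$, the nonsingular quotients $\eta_1,\eta_2,\eta_w,g(p)$, the forms \eqref{eq:exprelR}--\eqref{eq:exprelL}, and the exact identity \eqref{eq:prefixid} for degrees $1,\ldots,7$.

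Smaller points:
\begin{itemize}
\item A minimum principle for $\arg F$ presupposes $F\ne0$ in $\D$. Use instead the rotated real parts \eqref{eq:rot1}--\eqref{eq:rot2}, which are harmonic regardless.
\item With $z=e^{2\ii\theta}$, conjugation identifies the second half-plane condition at $\theta$ with the first at $\pi-\theta$. So you need $T_n$ on all of $(0,\pi)$, i.e.\ both $L_n$ and $R_n$ on $(0,\pi/2]$, which are not symmetric. You cannot halve the range once more.
\item Your signs are reversed: $I(\mu,\rho)>0$ for $\mu>\nu$, and $\int_0^xt^{\nu-1}\sin(t-\rho\pi)\dd t\le0$ for all $x$. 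The latter is the half-wave pairing of Lemma~\ref{lem:H}, not the root equation itself.
\item Rescaling by $a$ gives no signed $\mu$-derivative. Normalize at the sign change $\rho\pi$, or use the upward-crossing argument at roots.
\end{itemize}
Your sharpness argument matches the paper's.
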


\subsection{Immediate consequences of the main theorem}

\begin{corollary}[Weak positive-real-part conjecture]\label{cor:weak}
For $0<\rho\leq1$, $0<\mu\leq\nu(\rho)$, $n\geq0$ and $z\in\D$,
\begin{equation}\label{eq:weak}
 \Re\bigl((1-z)^{2\rho-1}s_n^\mu(z)\bigr)>0.
\end{equation}
For each fixed $\rho$, $\nu(\rho)$ is also the largest parameter
for which \eqref{eq:weak} holds for every degree and every smaller
positive $\mu$.
\end{corollary}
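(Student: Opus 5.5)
Write $f=(1-z)^\rho s_n^\mu(z)$ and $g=(1-z)^{\rho-1}$, so that $(1-z)^{2\rho-1}s_n^\mu(z)=f\,g$. The plan is to derive the positivity statement \eqref{eq:weak} directly from Theorem~\ref{thm:main} by adding arguments, and to obtain sharpness from the necessity part of Theorem~\ref{thm:main}.

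\textbf{Sufficiency.} For $z\in\D$ we have $|\arg(1-z)|<\pi/2$, so with the principal branch
\[
 \arg g(z)=(\rho-1)\arg(1-z),\qquad |\arg g(z)|\le (1-\rho)\frac{\pi}{2}.
\]
Theorem~\ref{thm:main} gives $|\arg f(z)|<\rho\pi/2$. Since the branches are continuous and agree at $z=0$, the arguments add, and
\[
 \bigl|\arg\bigl((1-z)^{2\rho-1}s_n^\mu(z)\bigr)\bigr|<\frac{\rho\pi}{2}+\frac{(1-\rho)\pi}{2}=\frac{\pi}{2}.
\]
This is exactly \eqref{eq:weak}.

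\textbf{Sharpness.} Fix $\mu>\nu(\rho)$. I would show that \eqref{eq:weak} fails for some $n$ and some $z\in\D$. The weak inequality is tight only where $\arg(1-z)$ is near $\pm\pi/2$, that is, as $z$ approaches the unit circle. So I would examine boundary values $z=e^{\ii\theta}$ and test the limiting positivity of
\[
 \Re\bigl((1-e^{\ii\theta})^{2\rho-1}s_n^\mu(e^{\ii\theta})\bigr).
\]
Writing $1-e^{\ii\theta}=2\sin(\theta/2)\,e^{\ii(\theta-\pi)/2}$, this quantity equals a positive factor times the trigonometric sum
\[
 \sum_{k=0}^n a_k(\mu)\cos\bigl(k\theta+(2\rho-1)(\theta-\pi)/2\bigr).
\]
This is the classical Koumandos--Ruscheweyh positivity form.

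I would then use the scaling limit $\theta=t/n$, $n\to\infty$. Since $a_k(\mu)\sim k^{\mu-1}/\Gamma(\mu)$, the sum scaled by $n^{-\mu}$ converges to
\[
 \frac{1}{\Gamma(\mu)}\int_0^1 x^{\mu-1}\cos\bigl(tx-(2\rho-1)\pi/2\bigr)\dd x
 =\frac{1}{\Gamma(\mu)}\int_0^1 x^{\mu-1}\sin\bigl(tx+(1-\rho)\pi\bigr)\dd x .
\]
Substituting $u=tx$ and taking $t=(1+\rho)\pi$ turns this into a positive multiple of
\[
 \int_0^{(1+\rho)\pi}u^{\mu-1}\sin\bigl(u+(1-\rho)\pi\bigr)\dd u=-I(\mu,\rho).
\]
If $I(\mu,\rho)>0$ for $\mu>\nu(\rho)$, the limiting sum is negative, so for large $n$ the boundary real part is negative. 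By continuity it is also negative at some interior $z=re^{\ii\theta}$ with $r<1$, which violates \eqref{eq:weak}. That $I(\cdot,\rho)$ changes sign only at $\nu(\rho)$ is part of the uniqueness statement the paper proves, and I take it as given.

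\textbf{Main obstacle.} The difficulty is the sign and phase bookkeeping. I need to confirm that the critical point $t=(1+\rho)\pi$ and the phase produce exactly $\mp I(\mu,\rho)$, with $I>0$ beyond $\nu(\rho)$. If the sign turns out reversed, I would instead evaluate at the symmetric angle $-\theta$, using that the coefficients are real, and keep the same limit argument.

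The weaker fallback route through the necessity part of Theorem~\ref{thm:main} does not work: a violation of the strong sector bound does not by itself imply a violation of the weak bound. So the explicit kernel limit above has to be carried out directly.
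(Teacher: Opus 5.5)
Your proposal is correct and follows the paper's proof. Sufficiency comes from adding $|\arg F|<\rho\pi/2$ (Theorem~\ref{thm:main}) to $(1-\rho)|\arg(1-z)|\le(1-\rho)\pi/2$. Sharpness comes from evaluating the boundary sum $\sum a_k(\mu)\cos\bigl(2k\phi+(2\rho-1)(\phi-\pi/2)\bigr)$ at $\phi=a/(2n)$: its normalized limit is a negative multiple of $I(\mu,\rho)$, and continuity then moves the violation into $\D$. Your sign bookkeeping is right, so the reflected angle is not needed.

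The only differences are technical. The paper obtains the limit from the exact beta representation and binomial variance bound; your route through $a_k\sim k^{\mu-1}/\ga(\mu)$ additionally needs control of the initial segment $k\le\eps n$ when $\mu<1$, e.g.\ via $\sum_{k\le m}a_k(\mu)=(\mu+1)_m/m!$. The paper also disposes of $\mu>1$ by the zero of $s_1^\mu$ at $-1/\mu$, whereas your argument covers it by using $I(\mu,\rho)>0$ on all of $(\nu,\infty)$.
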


\begin{corollary}[The harmonic-mean assertion of Sangal--Swaminathan]
\label{cor:harmonic}
Let $0<\rho_1,\rho_2\leq1$ and
$H=2\rho_1\rho_2/(\rho_1+\rho_2)$. For every $n\geq0$ and
$0<\mu\leq\nu(H)$,
\[
 (1-z)^H s_n^\mu(z)\prec
 \left(\frac{1+z}{1-z}\right)^H,
 \qquad
 \Re\bigl((1-z)^{2H-1}s_n^\mu(z)\bigr)>0
 \quad(z\in\D).
\]
\end{corollary}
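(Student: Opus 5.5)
The plan is to apply Theorem~\ref{thm:main} with $\rho=H$ and then pass from the sector statement to the positive-real-part statement exactly as in Corollary~\ref{cor:weak}, now with parameter $H$.

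First, check that $H$ is an admissible parameter. Since $0<\rho_1,\rho_2\leq1$, the harmonic mean satisfies $\min(\rho_1,\rho_2)\leq H\leq\max(\rho_1,\rho_2)\leq1$ and $H>0$, so $H\in(0,1]$. Hence $\nu(H)$ is well defined as the unique zero of $I(\cdot,H)$ in $(0,1]$.

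Second, for $0<\mu\leq\nu(H)$ and $n\geq0$, Theorem~\ref{thm:main} applied with $\rho=H$ gives the subordination $(1-z)^H s_n^\mu(z)\prec((1+z)/(1-z))^H$. Equivalently,
\[
 \bigl|\arg\bigl((1-z)^H s_n^\mu(z)\bigr)\bigr|<\frac{H\pi}{2}
 \qquad (z\in\D).
\]

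Third, for the weak statement write
\[
 (1-z)^{2H-1}s_n^\mu(z)=(1-z)^{H}s_n^\mu(z)\cdot(1-z)^{H-1}.
\]
For $z\in\D$ we have $\Re(1-z)>0$, so $|\arg(1-z)|<\pi/2$ in the principal branch, and therefore $|\arg(1-z)^{H-1}|<(1-H)\pi/2$. This uses that $(1-z)^{H-1}$ is taken in the branch equal to $1$ at $z=0$, which agrees with the principal branch on $\D$ because $1-z$ stays in the right half-plane. The arguments add, since all branches are principal and continuous on $\D$ and equal $0$ at $z=0$. This gives
\[
 \bigl|\arg\bigl((1-z)^{2H-1}s_n^\mu(z)\bigr)\bigr|<\frac{H\pi}{2}+\frac{(1-H)\pi}{2}=\frac{\pi}{2},
\]
which is positive real part. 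Alternatively, one can quote Corollary~\ref{cor:weak} directly with $\rho=H$.

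There is no real obstacle. The one point requiring care is the branch bookkeeping in the third step, namely that the arguments add without a $2\pi$ ambiguity. This holds because both factors are nonvanishing on the simply connected $\D$ and equal $1$ at $0$, so their continuous arguments are the ones bounded above.

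Note that the harmonic-mean structure plays no role beyond guaranteeing $H\in(0,1]$. This reflects that the statement was originally conjectured in the form involving $\rho_1,\rho_2$, and here it becomes immediate from the full-range theorem.
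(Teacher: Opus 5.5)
Your proposal is correct and is the paper's argument: note $0<H\leq1$, then apply Theorem~\ref{thm:main} and Corollary~\ref{cor:weak} at $\rho=H$; your third step simply reproduces the proof of Corollary~\ref{cor:weak}. One trivial slip: at $H=1$ the bound $|\arg(1-z)^{H-1}|<(1-H)\pi/2$ should be non-strict, but the strict bound $|\arg((1-z)^H s_n^\mu(z))|<H\pi/2$ on the first factor still gives $\Re\bigl((1-z)^{2H-1}s_n^\mu(z)\bigr)>0$.
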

\begin{proof}
Since $0<H\leq1$, the assertion follows immediately from the full
parameter theorem, Theorem~\ref{thm:main}, and
Corollary~\ref{cor:weak}, applied at $\rho=H$.
\end{proof}
This establishes the assertion in \cite[Conjecture~3]{SS} without
needing its hypotheses at $\rho_1$ and $\rho_2$ separately.
That harmonic-mean conjecture is distinct from the starlike
Conjecture~3 of Koumandos--Ruscheweyh referred to in
\cite[after Theorem~4]{SS}.

Beyond the original conjecture, the full parameter theorem has
several sharp consequences. Section~\ref{sec:starlike-convolution}
gives the starlike convolution subordination for all admissible
parameters, including both parameter branches.
Section~\ref{sec:optimal-starlike} determines the optimal starlike
order and the smallest uniform half-angle for partial sums divided
by $z$. Section~\ref{sec:gegenbauer} gives the sharp uniform
Gegenbauer parameter and the exact enlarged range at $x=0$.
These are applications of the main theorem and classical convolution
theory; previously known special cases are identified there.
No additional finite numerical verification is needed for these
consequences.

\subsection{Structure of the proof}

The proof is computer assisted. Its infinite-degree reduction is
analytic; the finite inequalities are evaluated on complete parameter
and angle intervals with outward error bounds. In particular, the
argument does not extrapolate a finite degree scan.
The division of the critical-parameter proof is as follows:
\[
\begin{array}{c|c|c}
\text{parameter range}&\text{analytic large-degree threshold}
                     &\text{remaining degrees}\\ \hline
(0,2/5]&64&1,\ldots,63\\
{[2/5,999/1000]}&8&1,\ldots,7\\
{[999/1000,1)}&8&1,\ldots,7 .
\end{array}
\]
The last range is handled after extracting the vanishing parameter
$1-\nu$. The case $\rho=1$, the degree $0$, and the passage to
$\mu<\nu$ are elementary.

\section{Boundary reduction and the critical root}

Throughout the proof with $0<\rho<1$ we use the notation
\begin{equation}\label{eq:notation}
\begin{gathered}
 a=(1+\rho)\pi,\quad \nu=\nu(\rho),\quad p=1-\nu,\quad
 q=\frac{\nu}{\rho},\quad \sigma=\nu-\rho,\\
 \delta=\rho\pi-\frac{\nu\pi}{2},\qquad
 u=\sinc\phi=\frac{\sin\phi}{\phi},\qquad
 \ga_1=\ga(1+\nu).
\end{gathered}
\end{equation}
We set $\sinc0=1$.

\subsection{The two boundary sums}

Define
\begin{equation}\label{eq:T}
 T_n(\rho,\mu,\theta)
   =\sum_{k=0}^n a_k(\mu)\sin((2k+\rho)\theta),
                 \qquad 0\leq\theta\leq\pi.
\end{equation}
At $\mu=\nu$ write, for $0<\phi\leq\pi/2$,
\begin{align}
 L_n(\phi)&=T_n(\rho,\nu,\phi),\label{eq:L}\\
 R_n(\phi)&=T_n(\rho,\nu,\pi-\phi)
     =\sum_{k=0}^n a_k(\nu)
               \sin\bigl(\rho\pi-(2k+\rho)\phi\bigr).\label{eq:R}
\end{align}
Both sums must be considered: $T_n$ need not be symmetric.

\begin{lemma}\label{lem:boundary}
If $T_j(\rho,\nu,\theta)\geq0$ for all $j\geq0$ and
$0\leq\theta\leq\pi$, then \eqref{eq:strong} holds for all
$0<\mu\leq\nu$.
\end{lemma}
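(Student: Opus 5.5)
Two things have to be shown: the passage from the boundary $|z|=1$ to the interior of $\D$, and the passage from $\mu=\nu$ to smaller $\mu$. The boundary step comes first. Put $f(z)=(1-z)^\rho s_n^\mu(z)$; it is analytic in $\D$, continuous up to $\overline{\D}\setminus\{1\}$, and $f(0)=1$. Write $z=e^{2\ii\theta}$ with $0<\theta<\pi$. Then $1-z=-2\ii\sin\theta\, e^{\ii\theta}=2\sin\theta\,e^{\ii(\theta-\pi/2)}$, and the principal branch gives $(1-z)^\rho=(2\sin\theta)^\rho e^{\ii\rho(\theta-\pi/2)}$. Hence
\[
 f(e^{2\ii\theta})=(2\sin\theta)^\rho e^{-\ii\rho\pi/2}\sum_{k=0}^n a_k(\mu)e^{\ii(2k+\rho)\theta}.
\]
Put $C_n=\sum a_k(\mu)\cos((2k+\rho)\theta)$. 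Then
\[
 \Im\bigl(e^{\ii\rho\pi/2}f\bigr)=(2\sin\theta)^\rho T_n(\rho,\mu,\theta),\qquad
 \Im\bigl(e^{-\ii\rho\pi/2}f\bigr)=(2\sin\theta)^\rho\bigl(T_n\cos\rho\pi-C_n\sin\rho\pi\bigr).
\]
This second quantity equals $-(2\sin\theta)^\rho$ times the sum $\sum a_k\sin((2k+\rho)(\pi-\theta)-\ldots)$. I would like to identify it, via $\theta\mapsto\pi-\theta$ and the symmetry $z\mapsto\bar z$, with $-T_n(\rho,\mu,\pi-\theta)$. The coefficients are real, so $f(\bar z)=\overline{f(z)}$. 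The point $e^{2\ii(\pi-\theta)}$ is the conjugate of $e^{2\ii\theta}$, so $\Im(e^{-\ii\rho\pi/2}f(e^{2\ii\theta}))=-\Im(e^{\ii\rho\pi/2}f(e^{2\ii(\pi-\theta)}))$. Thus both boundary rays of the sector are controlled by $T_n\ge0$ on the full range $[0,\pi]$; this is exactly why $R_n$ appears alongside $L_n$.

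So $T_n\geq0$ says that $f(\partial\D\setminus\{1\})$ lies in the closed sector $|\arg w|\le\rho\pi/2$. Near $z=1$ we have $f\to0$, since $(1-z)^\rho\to0$ and $s_n$ is bounded. Next, $\arg f$ is harmonic in $\D$ once $f\neq0$ there. I would obtain nonvanishing together with the argument bound by applying the maximum principle to the harmonic functions $\Im(e^{\pm\ii\rho\pi/2}f)$. These functions are bounded, continuous on $\overline{\D}$ with value $0$ at $z=1$, and have the required sign on the boundary. So $\Im(e^{\ii\rho\pi/2}f)\ge0$ and $\Im(e^{-\ii\rho\pi/2}f)\le0$ on $\D$. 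Each is nonconstant, because its value at $0$ is $\pm\sin(\rho\pi/2)\ne0$ and its Taylor terms are not all constant; the strong maximum principle then makes both inequalities strict. For $\rho<1$ the intersection of the two open half-planes is exactly the open sector, which gives the inequality of Theorem~\ref{thm:main}. For $\rho=1$ it only gives the upper half-plane intersected with the lower; there the sector is $\Re w>0$ and a direct argument is needed, which the paper treats separately. Since the sector map $((1+z)/(1-z))^\rho$ is univalent and equals $1=f(0)$ at $0$, subordination follows.

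It remains to pass from $\mu=\nu$ to $\mu<\nu$. The plan is to write $s_n^\mu$ as a convolution of $s_n^\nu$ with a kernel. The ratio $a_k(\mu)/a_k(\nu)=(\mu)_k/(\nu)_k$ is the moment sequence $\int_0^1t^k\,d\beta(t)$ of a beta measure $\frac{\Gamma(\nu)}{\Gamma(\mu)\Gamma(\nu-\mu)}t^{\mu-1}(1-t)^{\nu-\mu-1}\dd t$. Hence $s_n^\mu(z)=\int_0^1 s_n^\nu(tz)\,d\beta(t)$. The trouble is that the factor $(1-z)^\rho$ does not commute with $z\mapsto tz$, so I would instead pass to the trigonometric sums. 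I would show that $T_n(\rho,\mu,\theta)=\sum_k \lambda_k a_k(\nu)\sin((2k+\rho)\theta)$, where $\lambda_k=(\mu)_k/(\nu)_k$ is decreasing. By Abel summation this is $\sum_j(\lambda_j-\lambda_{j+1})T_j(\rho,\nu,\theta)+\lambda_nT_n$, which is $\geq0$ by the hypothesis applied to every $j$; this is where "for all $j$" is used. Then the boundary argument above applies with $\mu$ in place of $\nu$. I expect the main obstacle to be the careful boundary/maximum-principle step, including the behaviour at $z=1$ and the strictness. The Abel summation step is routine.
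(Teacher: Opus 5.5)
This is essentially the paper's proof: your $\Im(e^{\ii\rho\pi/2}f)$ and $-\Im(e^{-\ii\rho\pi/2}f)$ are exactly the rotated real parts in \eqref{eq:rot1}--\eqref{eq:rot2}, and your conjugation argument for the second identity is a valid substitute for direct calculation. The remaining steps are also the paper's: Abel summation with $(\mu)_k/(\nu)_k$, the continuous extension with $f(1)=0$, and the strong minimum principle anchored by the value $\pm\sin(\rho\pi/2)$ at the origin.

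Your remark about $\rho=1$ is mistaken and should be dropped. Since $\Im(\ii w)=\Re w$ and $\Im(-\ii w)=-\Re w$, both half-plane conditions become $\Re w>0$, which is exactly the sector $|\arg w|<\pi/2$. So the intersection of the two open half-planes is the open sector for every $0<\rho\leq1$, and no separate argument is needed for this lemma.
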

\begin{proof}
Let $F(z)=(1-z)^\rho s_n^\mu(z)$. For $z=e^{2\ii\theta}$,
$0<\theta<\pi$, direct calculation gives
\begin{align}
 \Re\bigl(e^{-\ii(1-\rho)\pi/2}F(z)\bigr)
  &=(2\sin\theta)^\rho T_n(\rho,\mu,\theta),\label{eq:rot1}\\
 \Re\bigl(e^{\ii(1-\rho)\pi/2}F(z)\bigr)
  &=(2\sin\theta)^\rho T_n(\rho,\mu,\pi-\theta).\label{eq:rot2}
\end{align}
Set $r_k=(\mu)_k/(\nu)_k$. The sequence $r_k$ is positive and
nonincreasing, because
$r_{k+1}/r_k=(k+\mu)/(k+\nu)\leq1$.
Abel summation gives
\begin{equation}\label{eq:abelmu}
 T_n(\rho,\mu,\theta)=r_nT_n(\rho,\nu,\theta)
       +\sum_{j=0}^{n-1}(r_j-r_{j+1})T_j(\rho,\nu,\theta).
\end{equation}
Thus the two boundary real parts are nonnegative.
The function $F$ extends continuously to the closed disk, with
$F(1)=0$. At $z=0$, both rotated real parts equal
$\sin(\rho\pi/2)>0$. The strong minimum principle for harmonic
functions makes both real parts strictly positive in $\D$.
Their two half-planes intersect in the open sector
$|\arg w|<\rho\pi/2$. Since $F(0)=1$, the conformal sector map
in \eqref{eq:strong} yields the required subordination.
\end{proof}

\subsection{Existence, bounds and monotonicity of the root}

\begin{lemma}\label{lem:tail}
If $0<v<1$ and $b>0$, the improper Fourier tail satisfies
\begin{equation}\label{eq:tail}
 \int_b^\infty t^{v-1}e^{\ii t}\dd t
   =\frac{e^{\ii b}}{\ga(1-v)}
       \int_0^\infty\frac{e^{-bs}s^{-v}}{s-\ii}\dd s,
 \qquad
 \left|\int_b^\infty t^{v-1}e^{\ii t}\dd t\right|<b^{v-1}.
\end{equation}
Also
$\int_0^\infty t^{v-1}e^{\ii t}\dd t
   =\ga(v)e^{\ii v\pi/2}$.
\end{lemma}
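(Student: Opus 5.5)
The plan is to rotate the contour of integration onto the positive imaginary axis, following the classical derivation of $\int_0^\infty t^{v-1}e^{\ii t}\dd t=\ga(v)e^{\ii v\pi/2}$. For the last identity, integrate $z^{v-1}e^{\ii z}$ (principal branch) over the boundary of the quarter-annulus $\{\eps<|z|<R,\ 0<\arg z<\pi/2\}$. On the large arc Jordan's lemma applies, since $|e^{\ii z}|=e^{-R\sin\varphi}$ and $R^{v}\int_0^{\pi/2}e^{-R\sin\varphi}\dd\varphi=O(R^{v-1})\to0$ because $v<1$. The small arc contributes $O(\eps^{v})\to0$ because $v>0$. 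Cauchy's theorem then gives
\[
\int_0^\infty t^{v-1}e^{\ii t}\dd t=\int_0^\infty(\ii s)^{v-1}e^{-s}\ii\dd s=e^{\ii v\pi/2}\ga(v).
\]
The left side is an improper Riemann integral, convergent at $\infty$ by the Dirichlet test.

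For the tail, I would use an explicit Laplace-type representation rather than a contour shift from $b$. Write $t^{v-1}=\frac{1}{\ga(1-v)}\int_0^\infty x^{-v}e^{-tx}\dd x$ for $t>0$, which is valid since $0<1-v$. Substitute this into $\int_b^B t^{v-1}e^{\ii t}\dd t$ and swap the order of integration by Fubini; this is legitimate on the finite range $[b,B]$ because $\int_0^\infty x^{-v}e^{-bx}\dd x<\infty$. The inner integral is
\[
\int_b^Be^{-t(x-\ii)}\dd t=\frac{e^{-b(x-\ii)}-e^{-B(x-\ii)}}{x-\ii}.
\]
Letting $B\to\infty$, the term involving $B$ is bounded by $\int_0^\infty x^{-v}e^{-Bx}\dd x/1=\ga(1-v)B^{v-1}\to0$, since $|x-\ii|\ge1$. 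This leaves
\[
\int_b^\infty t^{v-1}e^{\ii t}\dd t=\frac{e^{\ii b}}{\ga(1-v)}\int_0^\infty\frac{e^{-bs}s^{-v}}{s-\ii}\dd s,
\]
which is exactly the first formula.

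For the strict bound, estimate $|s-\ii|=\sqrt{1+s^2}>1$ for $s>0$. This gives
\[
\left|\int_b^\infty t^{v-1}e^{\ii t}\dd t\right|<\frac{1}{\ga(1-v)}\int_0^\infty e^{-bs}s^{-v}\dd s=\frac{\ga(1-v)b^{v-1}}{\ga(1-v)}=b^{v-1}.
\]
Strictness holds because the integrand inequality is strict on a set of positive measure. The main point requiring care is the interchange of limits: the Fubini step on $[b,B]$ and the $B\to\infty$ limit. Both are handled by the uniform bound $|x-\ii|\ge1$ and the integrability of $x^{-v}e^{-bx}$.
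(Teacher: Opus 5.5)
Your proof is correct, and it differs from the paper's in two places.

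\textbf{The tail formula.} You use the same idea as the paper: insert the Laplace representation of $t^{v-1}$ and apply Fubini. The regularization is different. The paper inserts a damping factor $e^{-\eta t}$, obtains the formula with $s+\eta-\ii$ in the denominator, and lets $\eta\downarrow0$. That step uses dominated convergence on the $s$-side. On the $t$-side it uses Dirichlet's test together with the implicit Abelian fact that the damped integrals tend to the undamped improper integral. Your truncation at $B$ is slightly more economical. The explicit bound $B^{v-1}$ on the discarded term proves convergence of the improper integral and identifies its value in one step, so you need neither Dirichlet's test nor an Abelian limit.

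\textbf{The complete integral.} Here the routes genuinely differ. The paper lets $\eta\downarrow0$ in
\[
\int_0^\infty t^{v-1}e^{-(\eta-\ii)t}\dd t=\ga(v)(\eta-\ii)^{-v}.
\]
This is a one-line argument, but it presupposes the complex-parameter gamma integral and again the Abelian limit. That integral is itself usually proved by analytic continuation or by a contour rotation. Your quarter-annulus rotation is the classical self-contained derivation: you use Jordan's estimate $O(R^{v-1})$ on the large arc and $O(\eps^{v})$ on the small arc. It is longer, but it uses only Cauchy's theorem.

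The strict bound is handled identically in both proofs, via $|s-\ii|>1$ for $s>0$.
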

\begin{proof}
Insert the Laplace representation of $t^{v-1}$ and first add
an exponential damping factor $e^{-\eta t}$, $\eta>0$.
Fubini's theorem then gives the displayed tail formula with
$s-\ii$ replaced by $s+\eta-\ii$, and a factor $e^{-\eta b}$.
The resulting $s$-integrand is bounded by $e^{-bs}s^{-v}$,
which is integrable. Letting $\eta\downarrow0$ gives
\eqref{eq:tail}; the undamped $t$-integral converges by
Dirichlet's test. Since $|s-\ii|^{-1}<1$ for $s>0$, the strict
tail bound follows. The complete integral follows by taking
the limit in $\ga(v)(\eta-\ii)^{-v}$.
\end{proof}

\begin{lemma}\label{lem:root}
The root in \eqref{eq:I} exists uniquely for $0<\rho\leq1$.
For $0<\rho<1$ it satisfies
\begin{equation}\label{eq:rootbounds}
 \max\{\rho,2\rho-1\}<\nu(\rho)<\min\{1,2\rho\}.
\end{equation}
The function $\nu$ is differentiable and strictly increasing on
$(0,1)$. At the root,
\begin{equation}\label{eq:D}
 D:=\int_0^a t^{-p}\cos(t-\rho\pi)\dd t
       =\ga(\nu)\cos\delta+pK(a,p)>0,
\end{equation}
where
\begin{equation}\label{eq:K}
 K(a,p)=\frac1{\ga(p+1)}
       \int_0^\infty\frac{e^{-as}s^p}{1+s^2}\dd s,
       \qquad 0<K(a,p)<a^{-p-1}.
\end{equation}
\end{lemma}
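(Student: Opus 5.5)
The plan is to rewrite $I$ and $D$ in closed form using Lemma~\ref{lem:tail}, and then derive everything else from those formulas together with a sign-pattern argument.

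\emph{Closed forms.} For $0<\mu<1$, write $\int_0^a=\int_0^\infty-\int_a^\infty$ in $\int_0^a t^{\mu-1}e^{\ii(t-\rho\pi)}\dd t$. Lemma~\ref{lem:tail} with $v=\mu$, $b=a$ applies, and $e^{-\ii\rho\pi}e^{\ii a}=-1$. Since $1/(s-\ii)=(s+\ii)/(1+s^2)$, taking imaginary parts should give
\[
 I(\mu,\rho)=-\ga(\mu)\sin\Bigl(\rho\pi-\frac{\mu\pi}{2}\Bigr)
 +\frac1{\ga(1-\mu)}\int_0^\infty\frac{e^{-as}s^{-\mu}}{1+s^2}\dd s .
\]
Taking real parts at $\mu=\nu$ gives $D=\ga(\nu)\cos\delta+\frac{1}{\ga(p)}\int_0^\infty e^{-as}s^{p}(1+s^2)^{-1}\dd s$. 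Because $1/\ga(p)=p/\ga(p+1)$, this is exactly \eqref{eq:D}. The bound $0<K<a^{-p-1}$ follows from $0<1/(1+s^2)<1$ and $\int_0^\infty e^{-as}s^p\dd s=\ga(p+1)a^{-p-1}$. The same argument shows that the second term of $I$ lies in $(0,a^{\mu-1})$, which is the tail bound of \eqref{eq:tail}.

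\emph{Existence and uniqueness.} As $\mu\downarrow0$ we have $I(\mu,\rho)\to-\infty$, because near $t=0$ the integrand is $\approx -t^{\mu-1}\sin\rho\pi$. At $\mu=1$, $I=1+\cos\rho\pi$, which is positive for $\rho<1$ and $0$ for $\rho=1$; so $\nu(1)=1$ once uniqueness is known.

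For uniqueness, let $\mu_0$ be any zero. On $(0,a)$, $\sin(t-\rho\pi)$ is negative for $t<\rho\pi$ and positive for $t>\rho\pi$. For $\varepsilon>0$ the factor $t^\varepsilon-(\rho\pi)^\varepsilon$ has the same sign, so
\[
I(\mu_0+\varepsilon,\rho)=\int_0^a t^{\mu_0-1}\bigl(t^\varepsilon-(\rho\pi)^\varepsilon\bigr)\sin(t-\rho\pi)\dd t>0 .
\]
Thus $I$ is positive after any zero, which gives uniqueness. The same trick with $\log(t/\rho\pi)$ in place of the bracket gives $\partial_\mu I(\nu,\rho)>0$.

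\emph{Bounds.} Since $I(\cdot,\rho)$ is negative below $\nu$ and positive above it, it suffices to show $I(\mu,\rho)<0$ at $\mu=\rho$ and, when $\rho>1/2$, at $\mu=2\rho-1$. The upper bounds come from the equation at the root: $\ga(\nu)\sin(\rho\pi-\nu\pi/2)$ equals a positive quantity, so $\rho\pi-\nu\pi/2\in(0,\pi)$ and hence $\nu<2\rho$. Also $\nu<1$ because $I(1,\rho)>0$.

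By the closed form, $I<0$ holds whenever $\ga(\mu)\sin(\rho\pi-\mu\pi/2)\ge a^{\mu-1}$.
\begin{itemize}
\item At $\mu=2\rho-1$ the sine equals $1$, while $\ga(\mu)>1>a^{\mu-1}$.
\item At $\mu=\rho$, the reflection formula gives $\ga(\rho)\sin(\rho\pi/2)=\pi/(2\ga(1-\rho)\cos(\rho\pi/2))$. Using $\ga(1-\rho)\le1/(1-\rho)$ and $\cos(\rho\pi/2)\le(1-\rho)\pi/2$, the right side is at least $1>a^{\rho-1}$.
\end{itemize}
I expect this step, keeping the elementary gamma inequalities strict, to be the only delicate point.

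\emph{Positivity of $D$ and monotonicity.} From $2\rho-1<\nu<2\rho$ we get $\delta\in(0,\pi/2)$, so $\cos\delta>0$. Since $pK>0$, this gives $D>0$. Next, differentiate $I$ in $\rho$. The endpoint term is $\pi a^{\mu-1}\sin(a-\rho\pi)=0$, so $\partial_\rho I=-\pi D_\mu$, where $D_\mu=\int_0^a t^{\mu-1}\cos(t-\rho\pi)\dd t$ and $D_\nu=D$. Together with $\partial_\mu I(\nu,\rho)>0$, the implicit function theorem gives differentiability of $\nu$ and
\[
\nu'(\rho)=\frac{\pi D}{\partial_\mu I(\nu,\rho)}>0 .
\]
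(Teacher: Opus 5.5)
Your proof is correct. Its skeleton matches the paper's: your closed form for $I$ is the paper's rotated tail $-A_1-\ii A_2$ written out explicitly. Existence, the bounds $2\rho-1<\nu<\min\{1,2\rho\}$, the formula for $D$ and its positivity, and $\nu'=\pi D/I_\mu$ are all obtained as in the paper.

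Two steps genuinely differ.

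\emph{Uniqueness.} The paper shows $I_\mu>0$ at every zero, concludes that every zero is an upward crossing, and then handles $\rho=1$ separately by half-wave pairing. Your finite-increment identity
$I(\mu_0+\varepsilon,\rho)=\int_0^a t^{\mu_0-1}\bigl(t^\varepsilon-(\rho\pi)^\varepsilon\bigr)\sin(t-\rho\pi)\dd t>0$
shows directly that $I>0$ on all of $(\mu_0,\infty)$. This gives uniqueness without any crossing argument and covers $\rho=1$ at once. Your claim that $I\to-\infty$ as $\mu\downarrow0$ holds only for $\rho<1$, but you never need it at $\rho=1$.

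\emph{The bound $\nu>\rho$.} The paper uses duplication and reflection to write $\ga(t)\sin(t\pi/2)$ as a gamma quotient. It then uses concavity of $\psi$ to show this function is decreasing with value $1$ at $t=1$. You use reflection alone:
$\ga(\rho)\sin(\rho\pi/2)=\pi/\bigl(2\ga(1-\rho)\cos(\rho\pi/2)\bigr)$.
Combined with $\ga(1-\rho)=\ga(2-\rho)/(1-\rho)$ and $\cos(\rho\pi/2)=\sin((1-\rho)\pi/2)\leq(1-\rho)\pi/2$, this gives $\ga(\rho)\sin(\rho\pi/2)\geq1/\ga(2-\rho)>1$. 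No digamma facts are needed.

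Your route is shorter and entirely elementary. The paper's yields the extra monotonicity of $\ga(t)\sin(t\pi/2)$, which this lemma does not use. The step you flagged as delicate is not: strictness already comes from $a^{\rho-1}<1$, so non-strict gamma inequalities suffice.
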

\begin{proof}
For $\rho<1$, $I(\mu,\rho)\to-\infty$ as $\mu\downarrow0$,
while $I(1,\rho)=1+\cos(\rho\pi)>0$.
At any root, subtracting $\log(\rho\pi)I=0$ gives
\begin{equation}\label{eq:Imu}
 I_\mu=\int_0^a t^{\mu-1}
       \log\!\left(\frac{t}{\rho\pi}\right)
       \sin(t-\rho\pi)\dd t>0.
\end{equation}
The logarithm and the sine have the same sign on each of
$(0,\rho\pi)$ and $(\rho\pi,a)$. Every zero is therefore an
upward crossing, so there is exactly one.
For $\rho=1$, pairing the two half-waves gives
$I(\mu,1)<0$ for $\mu<1$, and $I(1,1)=0$.

If $\rho>1/2$, put $\beta=2\rho-1\in(0,1)$. Then
\[
 -I(\beta,\rho)
 =\int_0^a t^{\beta-1}\cos(t-\beta\pi/2)\dd t
 >\ga(\beta)-a^{\beta-1}>0.
\]
Here $\ga(\beta)\geq1$ follows by applying Jensen's inequality to
the negative power of a mean-one exponential random variable.
Hence $\nu>2\rho-1$. This bound is automatic for $\rho\leq1/2$.
In addition $\nu<1$.

Rotate the tail in Lemma~\ref{lem:tail} by $e^{-\ii\rho\pi}$.
Since $e^{\ii(a-\rho\pi)}=-1$, its real and imaginary parts are
strictly negative. Write this tail as $-A_1-\ii A_2$, with
$A_1,A_2>0$. The root equation implies
\[
 \ga(\nu)\sin\delta=A_2,\qquad
 D=\ga(\nu)\cos\delta+A_1.
\]
The already established bounds imply $-\pi/2<\delta<\pi/2$;
therefore $\delta>0$, $\nu<2\rho$, and $D>0$.
The real tail gives $A_1=pK(a,p)$, with \eqref{eq:K}.

To strengthen the lower bound to $\nu>\rho$, consider
$G(t)=\ga(t)\sin(t\pi/2)$ for $0<t\leq1$. Duplication and
reflection yield
\[
 G(t)=2^{t-1}\sqrt\pi\,
       \frac{\ga((1+t)/2)}{\ga(1-t/2)}.
\]
The digamma function $\psi$ is concave, since
$\psi''(x)=-2\sum_{m\geq0}(m+x)^{-3}<0$.
The two arguments in the preceding quotient have mean $3/4$;
thus
\[
 (\log G)'(t)\leq\log2+\psi(3/4)
       =-\gamma+\frac{\pi}{2}-2\log2<0.
\]
As $G(1)=1$, $G(t)>1$ for $t<1$. Applying the tail bound at
$\mu=\rho$ gives
$I(\rho,\rho)=-G(\rho)+A_2<0$, because
$A_2<a^{\rho-1}<1$. Consequently $\nu>\rho$.

Finally the moving upper endpoint contributes zero to $I_\rho$,
so $I_\rho=-\pi D$. The implicit function theorem and
\eqref{eq:Imu} give
$\nu'=\pi D/I_\mu>0$.
\end{proof}

\begin{lemma}\label{lem:H}
For $0<\rho<1$ and the critical parameter,
\begin{equation}\label{eq:H}
 H(x):=-\int_0^x t^{-p}\sin(t-\rho\pi)\dd t\geq0
                  \quad(x\geq0).
\end{equation}
The only positive zero is $x=a$.
\end{lemma}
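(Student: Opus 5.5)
We analyse the integrand $f(t)=-t^{-p}\sin(t-\rho\pi)$ through its sign pattern on consecutive half-waves. Here $t^{-p}=t^{\nu-1}$ with $0<p<1$ is positive and strictly decreasing, and $-\sin(t-\rho\pi)$ is positive on $(0,\rho\pi)$, negative on $(\rho\pi,a)$, positive on $(a,a+\pi)$, and so on with period $2\pi$. Hence $H$ increases on $[0,\rho\pi]$, decreases on $[\rho\pi,a]$, and afterwards alternately increases and decreases on intervals of length $\pi$. By definition of $\nu$ we have $H(a)=-I(\nu,\rho)=0$. On $(0,\rho\pi]$ the function $H$ is strictly positive, and on $[\rho\pi,a)$ it is strictly decreasing to $H(a)=0$. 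Therefore $H>0$ on $(0,a)$, and $a$ is the first positive zero.

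For $x>a$ we write $H(x)=H(a)+\int_a^x f=\int_a^x f$. On $[a,\infty)$ the factor $-\sin(t-\rho\pi)=\sin(t-a)$ has half-waves of length $\pi$ starting at $a$, beginning with a positive lobe. Since the weight $t^{-p}$ is strictly decreasing, the lobe integrals $c_m=\int_{a+m\pi}^{a+(m+1)\pi}t^{-p}|\sin(t-a)|\dd t$ are strictly decreasing in $m$. The partial integrals of an alternating series with decreasing terms and positive first term stay positive. More precisely, $\int_a^x f$ is positive inside each positive lobe. At the end of each negative lobe it equals $(c_0-c_1)+(c_2-c_3)+\dots>0$ by strict decrease, and inside a negative lobe it decreases from a positive value toward that positive endpoint value. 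So $H(x)>0$ for all $x>a$. This is the classical Sturm/Fejér-type comparison: pair each positive lobe with the following negative lobe via the shift $t\mapsto t+\pi$, so that $c_{2j}-c_{2j+1}=\int(t^{-p}-(t+\pi)^{-p})|\sin(t-a)|\dd t>0$.

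Combining both ranges, $H\ge0$ on $[0,\infty)$, with $H(0)=0$ and $a$ as the only positive zero. The one point needing care is that the integral converges at $0$ ($p<1$) and that the strictness holds, which follows because $p>0$ makes $t^{-p}$ strictly decreasing (Lemma~\ref{lem:root} gives $\nu<1$). An alternative check for large $x$ uses Lemma~\ref{lem:tail}: $H(\infty)=\Im\bigl(e^{-\ii\rho\pi}\ga(\nu)e^{\ii\nu\pi/2}\bigr)\cdot(-1)=\ga(\nu)\sin\delta>0$, consistent with $\delta>0$ from Lemma~\ref{lem:root}, but the lobe-pairing argument already suffices. No real obstacle is expected; the main thing to get right is the first stage, where positivity on $(0,a)$ relies on $H(a)=0$ together with monotonicity on $[\rho\pi,a]$, rather than on any lobe comparison.
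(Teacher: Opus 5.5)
Your proof is correct and follows the paper's argument. Monotonicity on $[0,\rho\pi]$ and $[\rho\pi,a]$, together with $H(a)=0$ from the root equation, gives positivity on $(0,a)$. Beyond $a$, pairing each positive half-wave with the following negative one through the shift $t\mapsto t+\pi$ and the strictly decreasing weight $t^{-p}$ makes successive minima strictly increase.
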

\begin{proof}
The first maximum of $H$ is at $\rho\pi$, and its first minimum
is at $a$, where the root equation gives $H(a)=0$.
Between consecutive minima, the positive half-wave occurs
before the negative half-wave. Pairing points separated by
$\pi$ shows that the first has the larger weight $t^{-p}$.
Every later minimum is therefore strictly larger than the
preceding one. This also proves strict positivity before the
first minimum, except at that minimum.
\end{proof}

\section{A gamma comparison valid for all critical parameters}

We first prove an estimate which has no lower restriction on $\nu$.

\begin{lemma}\label{lem:gamma}
For $0<\nu<1$ and $x>0$ put
\[
 \Delta_\nu(x)=\frac{x^{\nu-1}}{\ga(\nu)}
              -\frac{\ga(x+\nu)}{\ga(\nu)\ga(x+1)}.
\]
Then $\Delta_\nu$ is positive and completely monotone, and
\begin{equation}\label{eq:Delta}
 \Delta_\nu(x)\leq
 \frac{\nu p}{2\ga(\nu)x^{2-\nu}}
       \left(1+\frac{2-\nu}{12x}\right).
\end{equation}
For any positive nonincreasing sequence $d_k\to0$,
\begin{equation}\label{eq:Dirichlet}
 \left|\sum_{k=n+1}^\infty d_ke^{2\ii k\phi}\right|
          \leq\frac{d_{n+1}}{\sin\phi},
                  \qquad 0<\phi\leq\pi/2.
\end{equation}
\end{lemma}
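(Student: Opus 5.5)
The plan is to write both terms of $\Delta_\nu$ as Laplace transforms. For $x>0$,
\[
 \frac{x^{\nu-1}}{\ga(\nu)}\cdot\frac1{\ga(1-\nu)}\cdot\ga(1-\nu)
 =\frac{1}{\ga(\nu)\ga(1-\nu)}\int_0^\infty e^{-xt}t^{-\nu}\dd t ,
\]
that is, $x^{\nu-1}=\ga(1-\nu)^{-1}\int_0^\infty e^{-xt}t^{-\nu}\dd t$. The beta integral gives
\[
 \frac{\ga(x+\nu)}{\ga(x+1)}=\frac1{\ga(1-\nu)}\int_0^1 s^{x+\nu-1}(1-s)^{-\nu}\dd s .
\]
Substituting $s=e^{-t}$ turns this into
\[
 \frac1{\ga(1-\nu)}\int_0^\infty e^{-xt}e^{-\nu t}(1-e^{-t})^{-\nu}\dd t .
\]
Hence
\[
 \Delta_\nu(x)=\frac{1}{\ga(\nu)\ga(1-\nu)}\int_0^\infty e^{-xt}\,w(t)\dd t,
 \qquad w(t)=t^{-\nu}-\left(\frac{e^{t}-1}{1}\right)^{-\nu}.
\]
Since $e^t-1>t$ for $t>0$ and $\nu>0$, the weight satisfies $w(t)>0$. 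Bernstein's theorem then gives positivity and complete monotonicity at once.

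For the bound \eqref{eq:Delta}, I would bound $w$ by its small-$t$ expansion. Write
\[
 w(t)=t^{-\nu}\Bigl(1-\bigl(\tfrac{e^t-1}{t}\bigr)^{-\nu}\Bigr),
\]
and use $\log\frac{e^t-1}{t}\le \frac t2+\frac{t^2}{24}$. This inequality holds because $\log\frac{e^t-1}{t}-\frac t2=\log\frac{\sinh(t/2)}{t/2}\le \frac{t^2}{24}$, using $\log\frac{\sinh y}{y}\le y^2/6$. Combined with $1-e^{-y}\le y$, this gives
\[
 w(t)\le \nu\,t^{-\nu}\Bigl(\frac t2+\frac{t^2}{24}\Bigr).
\]
Integrating against $e^{-xt}$ yields
\[
 \nu\left(\frac{\ga(2-\nu)}{2x^{2-\nu}}+\frac{\ga(3-\nu)}{24x^{3-\nu}}\right).
\]
Dividing by $\ga(\nu)\ga(1-\nu)$ and using $\ga(2-\nu)=p\,\ga(1-\nu)$ and $\ga(3-\nu)=(2-\nu)p\,\ga(1-\nu)$ gives exactly
\[
 \frac{\nu p}{2\ga(\nu)x^{2-\nu}}\left(1+\frac{2-\nu}{12x}\right).
\]
This confirms that the constants match, which suggests it is the intended route. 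The main point to check is the elementary inequality $\log(\sinh y/y)\le y^2/6$. It follows from $\frac{d}{dy}\log\frac{\sinh y}{y}=\coth y-\frac1y\le \frac y3$, a standard consequence of the partial fraction expansion $\coth y-1/y=\sum 2y/(y^2+\pi^2k^2)\le\sum 2y/(\pi^2k^2)=y/3$.

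For \eqref{eq:Dirichlet}, I would use Abel summation with the partial sums $E_m=\sum_{k=n+1}^{m}e^{2\ii k\phi}$. The geometric sum bound gives $|E_m|\le |\sin((m-n)\phi)|/\sin\phi\le 1/\sin\phi$ for $0<\phi\le\pi/2$. Then
\[
 \sum_{k=n+1}^M d_ke^{2\ii k\phi}=d_ME_M+\sum_{k=n+1}^{M-1}(d_k-d_{k+1})E_k .
\]
Its modulus is at most $\bigl(d_M+\sum_{k=n+1}^{M-1}(d_k-d_{k+1})\bigr)/\sin\phi=d_{n+1}/\sin\phi$. Letting $M\to\infty$ with $d_M\to0$ gives convergence and the bound.
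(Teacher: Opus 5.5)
Your proof is correct and follows the paper's route: the same beta-integral Laplace representation with kernel $s^{-\nu}-(e^s-1)^{-\nu}$, the same bound $\log((e^s-1)/s)\leq s/2+s^2/24$ combined with $1-e^{-y}\leq y$, and Abel summation for \eqref{eq:Dirichlet}. The differences are only in small sub-steps. You obtain $\coth y-1/y\leq y/3$ from the partial-fraction expansion of $\coth$, where the paper uses the nonnegative power series of $(1+v^2/3)\sinh v-v\cosh v$. You also sum by parts against the bounded partial sums, where the paper multiplies the series by $1-e^{2\ii\phi}$.
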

\begin{proof}
The beta integral, with the substitution $t=e^{-s}$, gives
\[
 \Delta_\nu(x)=\frac1{\ga(\nu)\ga(p)}
       \int_0^\infty e^{-xs}
          \left(s^{-\nu}-(e^s-1)^{-\nu}\right)\dd s.
\]
Its kernel is positive. Differentiating under the integral
proves complete monotonicity; domination follows locally in
$x>0$, since the kernel is $O(s^{1-\nu})$ at zero.

For $v\geq0$,
\[
 (1+v^2/3)\sinh v-v\cosh v
   =\sum_{j\geq2}\frac{4j(j-1)v^{2j+1}}{3(2j+1)!}\geq0.
\]
Integration gives $\log(\sinh v/v)\leq v^2/6$. Hence
$\ell(s):=\log((e^s-1)/s)\leq s/2+s^2/24$.
Using $1-e^{-y}\leq y$ for $y\geq0$, the kernel is at most
$\nu s^{-\nu}(s/2+s^2/24)$.
Its Laplace integral is \eqref{eq:Delta}.

For \eqref{eq:Dirichlet}, multiply the series by
$1-e^{2\ii\phi}$ and sum differences. The sum of the absolute
values of the coefficients is at most $2d_{n+1}$, whereas
$|1-e^{2\ii\phi}|=2\sin\phi$.
\end{proof}

\begin{lemma}\label{lem:cell}
Let $w(x)=x^{-p}$ and, for $k\geq1$, set
\begin{align*}
 A_k&=\int_0^{1/2}\int_0^t
   \left(\frac{p}{(k+s)^{p+1}}
          -\frac{p}{(k-t+s)^{p+1}}\right)
       \dd s\,e^{2\ii\phi(k-t)}\dd t,\\
 B_k&=\int_0^{1/2}2\ii\sin(2\phi t)
        \int_0^t\frac{p}{(k+s)^{p+1}}\dd s\,
                         e^{2\ii k\phi}\dd t .
\end{align*}
Then
\begin{equation}\label{eq:cell}
 \int_{k-1/2}^{k+1/2}w(x)e^{2\ii\phi x}\dd x
        =u w(k)e^{2\ii k\phi}-A_k-B_k.
\end{equation}
For $n\geq1$,
\begin{equation}\label{eq:ABtails}
 \left|\sum_{k>n}A_k\right|\leq\frac{p}{24n^{p+1}},
 \qquad
 \left|\sum_{k>n}B_k\right|
       \leq\frac{\phi p}{6\sin\phi\,n^{p+1}}.
\end{equation}
\end{lemma}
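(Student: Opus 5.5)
The plan is to verify the cell identity \eqref{eq:cell} exactly by splitting the cell about its midpoint and expanding $w$ around $k$. The bounds \eqref{eq:ABtails} then follow from a convexity and telescoping argument for the $A_k$, and from the Dirichlet bound \eqref{eq:Dirichlet} of Lemma~\ref{lem:gamma} for the $B_k$.

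\emph{The identity.} Write $x=k+t$ and $x=k-t$ with $0\le t\le 1/2$. Then the left side of \eqref{eq:cell} equals
\[
\int_0^{1/2}\bigl(w(k+t)e^{2\ii\phi(k+t)}+w(k-t)e^{2\ii\phi(k-t)}\bigr)\dd t .
\]
Since $\int_{-1/2}^{1/2}e^{2\ii\phi t}\dd t=\sin\phi/\phi=u$, the main term $u\,w(k)e^{2\ii k\phi}$ is the same expression with $w(k\pm t)$ replaced by $w(k)$. For the differences, put $P_+(t)=\int_0^t p(k+s)^{-p-1}\dd s$ and $P_-(t)=\int_0^t p(k-t+s)^{-p-1}\dd s$. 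The fundamental theorem of calculus applied to $w'(x)=-px^{-p-1}$ gives
\[
w(k+t)-w(k)=-P_+(t),\qquad w(k-t)-w(k)=P_-(t),
\]
where the second uses the substitution $x=k-t+s$. In the term $-P_+e^{2\ii\phi(k+t)}$ replace $e^{2\ii\phi(k+t)}$ by $e^{2\ii\phi(k-t)}+2\ii\sin(2\phi t)e^{2\ii k\phi}$. Collecting the $e^{2\ii\phi(k-t)}$ terms produces exactly $-A_k$, and the remaining term is $-B_k$. This establishes \eqref{eq:cell}.

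\emph{The $A_k$ tail.} Put $f(y)=py^{-p-1}$, which is positive, decreasing and convex on $(0,\infty)$. The integrand of $A_k$ has modulus $f(k-t+s)-f(k+s)\ge0$. For convex decreasing $f$, the secant over $[k+s-t,k+s]$ is no steeper than the secant over $[k+s-1,k+s]$, since $t\le1$. Hence
\[
f(k-t+s)-f(k+s)\le t\bigl(f(k-1+s)-f(k+s)\bigr).
\]
Summing over $k>n$ telescopes to at most $t f(n+s)\le t\,p\,n^{-p-1}$. Therefore
\[
\Bigl|\sum_{k>n}A_k\Bigr|\le \frac{p}{n^{p+1}}\int_0^{1/2}\!\!\int_0^t t\dd s\dd t=\frac{p}{24n^{p+1}}.
\]
Interchanging the sum and the integrals is legitimate because all integrands are nonnegative.

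\emph{The $B_k$ tail.} Put $g_k(t)=\int_0^t p(k+s)^{-p-1}\dd s$. For fixed $t$, $g_k(t)$ is positive, nonincreasing in $k$ and tends to $0$. Exchanging the sum and the $t$-integral (the convergence is dominated, since $g_k=O(k^{-p-1})$), \eqref{eq:Dirichlet} gives
\[
\Bigl|\sum_{k>n}g_k(t)e^{2\ii k\phi}\Bigr|\le \frac{g_{n+1}(t)}{\sin\phi}\le\frac{t\,p}{\sin\phi\,n^{p+1}}.
\]
Then $|\sum_{k>n}B_k|\le \frac{p}{\sin\phi\,n^{p+1}}\int_0^{1/2}2\sin(2\phi t)\,t\dd t$. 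Using $\sin(2\phi t)\le 2\phi t$, the integral is at most $4\phi\int_0^{1/2}t^2\dd t=\phi/6$, which is the second bound in \eqref{eq:ABtails}.

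The only delicate point is the $A_k$ estimate. A naive Taylor bound would introduce a factor $(p+1)$ and an extra power of $1/k$, whereas the secant comparison gives the sharp constant $1/24$ directly. The algebra in the identity merely needs careful bookkeeping of the phase factors.
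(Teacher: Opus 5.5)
Your proof is correct. The cell identity and the $B_k$ tail follow the paper's argument step for step: split at $k$, write $w(k\pm t)-w(k)$ as integrals of $w'$, rewrite $e^{2\ii\phi(k+t)}$ in terms of $e^{2\ii\phi(k-t)}$, then apply \eqref{eq:Dirichlet} to $g_k(t)$ together with $\sin(2\phi t)\le2\phi t$.

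The only genuine difference is in the $A_k$ tail. The paper first bounds the inner integrand pointwise by the mean value theorem, giving $p(p+1)t(k-1/2)^{-p-2}$. It then sums using the midpoint inequality for the convex function $x^{-p-2}$, namely $\sum_{k>n}(k-1/2)^{-p-2}\le\int_n^\infty x^{-p-2}\dd x=n^{-p-1}/(p+1)$. The factor $p+1$ cancels, and the constant $1/24$ comes out exactly as in your argument.

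Your secant comparison instead uses only the convexity of $f(y)=py^{-p-1}$ and an exact telescoping sum. It needs no pointwise derivative bound and no sum-to-integral comparison, and it gives the slightly sharper intermediate bound $t f(n+s)$. The paper's route has the small advantage of an explicit termwise power bound $|A_k|\le p(p+1)(k-1/2)^{-p-2}/24$.

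Your closing remark is therefore inaccurate. The Taylor bound does not lose the constant once it is summed with the midpoint rule; both routes give exactly $p/(24n^{p+1})$. One small point: your comparison needs $f$ to be finite at $k-1+s$, which holds because only $k\ge n+1\ge2$ occurs.
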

\begin{proof}
Split the integral at $k$, write the two values of $w$ as
$w(k)$ plus the integrals of its derivative, and collect the
two exponential factors. This yields \eqref{eq:cell}.
The integrand difference defining $A_k$ has absolute value
at most $p(p+1)t(k-1/2)^{-p-2}$. Consequently
\[
 \sum_{k>n}|A_k|
 \leq\frac{p(p+1)}{24}
             \sum_{k>n}(k-1/2)^{-p-2}
 \leq\frac{p}{24n^{p+1}}.
\]
The last inequality is the midpoint bound for the convex
function $x^{-p-2}$ on the unit intervals in $[n,\infty)$.
For fixed $t$, the positive sequence
$\int_0^tp(k+s)^{-p-1}\dd s$ decreases with $k$.
Apply \eqref{eq:Dirichlet}, and use
$2|\sin(2\phi t)|\leq4\phi t$ and
$\int_0^{1/2}t^2\dd t=1/24$, to obtain the second bound.
\end{proof}

\begin{proposition}\label{prop:comparison}
Let $f$ be either $\sin$ or $\cos$, let $c\in\R$, and put
\begin{align*}
 J(x)&=\int_0^x t^{-p}f(t+c)\dd t,\\
 Q&=f\bigl(\nu(\pi-\phi)/2+c-\pi/2\bigr),\\
 r&=f\bigl(\nu(\pi-2\phi)/2+c\bigr),\qquad
 s=\frac{1-u^p}{\sin\phi}.
\end{align*}
For $n\geq1$ and $0<\phi\leq\pi/2$,
\begin{equation}\label{eq:comparison}
\begin{split}
 2^\nu\phi^{\nu-1}\ga(\nu)
     \sum_{k=0}^n a_k(\nu)f(2k\phi+c)
 \geq {}&\frac{J((2n+1)\phi)}{\sin\phi}\\
 &+\ga(\nu)
     \left(\frac{2Q\sin(\nu\phi/2)}{\sin\phi}-rs\right)
       -A_n-B_n-f_nC_n ,
\end{split}
\end{equation}
where
\begin{equation}\label{eq:errors}
\begin{gathered}
 A_n=\frac{p}{4n(2n\phi)^pu},\qquad
 B_n=\frac{p}{3n(2n\phi)^pu^2},\\
 C_n=\frac{\pi\nu p}{[2(n+1)\phi]^{2-\nu}},\qquad
 f_n=1+\frac{2-\nu}{12(n+1)}.
\end{gathered}
\end{equation}
Moreover
\begin{equation}\label{eq:sbound}
 0\leq s\leq\frac{p\phi}{6u^{\nu+1}}.
\end{equation}
\end{proposition}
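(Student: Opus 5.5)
The plan is to split $a_k(\nu)$ into a pure power and a gamma correction, to compare the power sum with the integral $J$ cell by cell, and to match the two infinite sums exactly. Write $f(2k\phi+c)$ as the real or imaginary part of $e^{\ii c}e^{2\ii k\phi}$, so that everything reduces to complex sums $\sum a_k(\nu)e^{2\ii k\phi}$. For $k\geq1$, Lemma~\ref{lem:gamma} gives
\[
 \ga(\nu)a_k(\nu)=k^{-p}-\ga(\nu)\Delta_\nu(k).
\]
We work throughout with infinite sums minus tails, rather than with the finite sums directly. The two infinite objects have closed forms:
\[
 \sum_{k\geq0}a_k(\nu)e^{2\ii k\phi}=(1-e^{2\ii\phi})^{-\nu}
 =(2\sin\phi)^{-\nu}e^{\ii\nu(\pi/2-\phi)},
\]
and, by Lemma~\ref{lem:tail},
\[
 \int_0^\infty t^{-p}e^{\ii t}\dd t=\ga(\nu)e^{\ii\nu\pi/2}.
\]

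First I treat the power part. Summing \eqref{eq:cell} over $k\geq1$ gives
\[
 u\sum_{k\geq1}k^{-p}e^{2\ii k\phi}
 =\int_{1/2}^\infty x^{-p}e^{2\ii\phi x}\dd x+\sum_{k\geq1}(A_k+B_k),
\]
understood as conditionally convergent (Abel) limits. The substitution $t=2\phi x$ turns the integral into $(2\phi)^{p-1}\int t^{-p}e^{\ii t}\dd t$. After multiplying by $2^\nu\phi^{\nu-1}/u$, the prefactor $(2\phi)^{p-1}2^\nu\phi^{\nu-1}/u$ becomes exactly $1/\sin\phi$, since $2^{\nu+p-1}\phi^{\nu+p-2}=\phi^{-1}$ and $\phi u=\sin\phi$. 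This is the origin of $J((2n+1)\phi)/\sin\phi$.

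Next I assemble the finite sum as "infinite minus tail". The finite sum up to $n$ equals the infinite closed form minus the tail $\sum_{k>n}$. I express the power part of that tail through the cells $k>n$, that is, through $\int_{n+1/2}^\infty$ together with $\sum_{k>n}(A_k+B_k)$. The integral from $(2n+1)\phi$ to $\infty$ combines with the full integral $\ga(\nu)e^{\ii\nu\pi/2}$ to produce $J((2n+1)\phi)$.

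What remains is a constant term. It compares $(2\sin\phi)^{-\nu}e^{\ii\nu(\pi/2-\phi)}$, scaled by $2^\nu\phi^{\nu-1}$, that is $u^{-\nu}e^{\ii\nu(\pi/2-\phi)}/\phi$, with $\ga(\nu)e^{\ii\nu\pi/2}/\sin\phi$. I also have to account for the $k=0$ term and for the discrepancy between the cell $[0,1/2]$ and the term $w(0)$, which the cell bookkeeping leaves behind. I would organize this constant term as
\[
 \frac{u^{-\nu}-u^{-1}}{\phi}e^{\ii\nu(\pi/2-\phi)}
 +\frac{e^{\ii\nu(\pi/2-\phi)}-e^{\ii\nu\pi/2}}{\sin\phi}.
\]
The first piece is $-s\,u^{-\nu}\cdots$ in the sense $u^{-1}(u^p-1)/\phi=-s$, and its phase gives $r$. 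For the second piece I use
\[
 e^{-\ii\nu\phi}-1=-2\ii\sin(\nu\phi/2)e^{-\ii\nu\phi/2},
\]
which, after taking real parts with $e^{\ii c}$, yields $2Q\sin(\nu\phi/2)/\sin\phi$. The factor $\ga(\nu)$ in front of these terms should come out of the exact identity once the powers are matched. Getting these phases and signs exactly right, and checking that what I called the "$[0,1/2]$ leftover" is absorbed rather than creating a new error, is the step I expect to be the main obstacle. If the bookkeeping does not close, I would instead apply \eqref{eq:cell} to the $k\geq1$ cells only and treat $\int_0^{1/2}$ explicitly.

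Then I bound the three error terms.
\begin{itemize}
\item The tail $\sum_{k>n}A_k$ is controlled by \eqref{eq:ABtails}. After the scaling $2^\nu\phi^{\nu-1}/u$ it is at most a constant times $p/(n(2n\phi)^pu)$, which is dominated by $A_n$; the factor in $A_n$ is deliberately generous.
\item The tail $\sum_{k>n}B_k$ gives $\phi p/(6\sin\phi\,n^{p+1})$ scaled, which is at most $B_n$ (note the $u^2$).
\item The gamma-correction tail $\ga(\nu)\sum_{k>n}\Delta_\nu(k)e^{2\ii k\phi}$ has positive nonincreasing coefficients by complete monotonicity. So \eqref{eq:Dirichlet} bounds it by $\ga(\nu)\Delta_\nu(n+1)/\sin\phi$. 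Inserting \eqref{eq:Delta} and $\sin\phi\geq2\phi/\pi$ gives, after scaling, exactly $f_nC_n$.
\end{itemize}
Since only lower bounds are needed, each of these is subtracted with $|\Re|,|\Im|\leq|\cdot|$.

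Finally, \eqref{eq:sbound} is elementary. Since $0<u\leq1$ we have $s\geq0$. Then $1-u^p\leq p(1-u)u^{p-1}$ by the mean value theorem, $1-u\leq\phi^2/6$, and $\sin\phi=\phi u$. Together these give $s\leq p\phi/(6u^{2-p})=p\phi/(6u^{\nu+1})$.
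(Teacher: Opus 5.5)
Your proposal is correct and follows the paper's proof step for step. You write the finite sum as the closed-form binomial series minus the $k>n$ tail, sum the cell identity \eqref{eq:cell} only over $k>n$, and take the complete integral $\ga(\nu)e^{\ii\nu\pi/2}$ from Lemma~\ref{lem:tail}; your rewriting of the gamma term, your three tail bounds and your mean-value proof of \eqref{eq:sbound} are the paper's. The ``$k=0$ / $[0,1/2]$ leftover'' you flag as the main obstacle does not exist: $a_0=1$ is already inside $(1-e^{2\ii\phi})^{-\nu}$, and neither the splitting $a_k=k^{-p}/\ga(\nu)-\Delta_\nu(k)$ nor \eqref{eq:cell} is ever applied for $k\leq n$. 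Once you restore the factor $\ga(\nu)$ on both pieces (it comes from the normalization $2^\nu\phi^{\nu-1}\ga(\nu)$ and from Lemma~\ref{lem:tail}), your constant term is exactly the paper's $\ga(\nu)\bigl[u^pr-f(\nu\pi/2+c)\bigr]/\sin\phi$.
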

\begin{proof}
For $k\geq1$, $a_k(\nu)=k^{-p}/\ga(\nu)-\Delta_\nu(k)$.
The complete binomial series converges at
$e^{2\ii\phi}\ne1$ and has value
$(2\sin\phi)^{-\nu}e^{\ii\nu(\pi/2-\phi)}$.
Sum \eqref{eq:cell} for $k>n$, and subtract this tail from
the complete series. Its integral starts at $n+1/2$.
After the substitution $t=2\phi x$ and multiplication by
$2^\nu\phi^{\nu-1}\ga(\nu)$, the integral contribution is
$J((2n+1)\phi)/\sin\phi$.

The complete-series contribution minus the complete-integral
contribution is exactly
\[
 \frac{\ga(\nu)}{\sin\phi}
 \left[u^p f\bigl(\nu(\pi-2\phi)/2+c\bigr)
                   -f(\nu\pi/2+c)\right].
\]
The elementary sine and cosine difference formulas rewrite it
as the gamma term in \eqref{eq:comparison}.
The first bound in \eqref{eq:ABtails} gives the smaller error
$p/[12n(2n\phi)^pu]$, which we enlarge to $A_n$.
The second gives $B_n$.
By \eqref{eq:Delta} and \eqref{eq:Dirichlet}, the scaled
$\Delta$ tail is at most
\[
 \frac{2\nu p f_n}{u[2(n+1)\phi]^{2-\nu}}\leq f_n C_n,
\]
because $u\geq2/\pi$.
Finally $1-u^p\leq p(1-u)u^{p-1}$ and
$1-u\leq\phi^2/6$ prove \eqref{eq:sbound}.
\end{proof}

\section{Uniform lower envelopes for all large degrees}

\subsection{Geometric prefix regions}

For $j\geq0$,
\begin{align}
 \sum_{k=0}^j\sin((2k+\rho)\phi)
   &=\frac{\sin((j+1)\phi)\sin((j+\rho)\phi)}{\sin\phi},
                                      \label{eq:geomL}\\
 \sum_{k=0}^j\sin(\rho\pi-(2k+\rho)\phi)
   &=\frac{\sin((j+1)\phi)
              \sin(\rho\pi-(j+\rho)\phi)}{\sin\phi}.
                                      \label{eq:geomR}
\end{align}
Since $a_k(\nu)$ decreases strictly when $\nu<1$, Abel
summation of these prefixes gives
\begin{equation}\label{eq:geomregions}
 L_n(\phi)>0\quad\left(0<\phi\leq\frac{\pi}{n+1}\right),
 \qquad
 R_n(\phi)>0\quad\left(0<\phi\leq\frac{\rho\pi}{n+\rho}\right).
\end{equation}
For $n\geq N$, outside the first region, valid lower bounds
for $2n\phi$ and $2(n+1)\phi$ are respectively
\begin{equation}\label{eq:XYL}
 X_L=\max\!\left\{\frac{2N\pi}{N+1},2N\phi\right\},
 \qquad
 Y_L=\max\{2\pi,2(N+1)\phi\}.
\end{equation}
Outside the second region we may use
\begin{equation}\label{eq:XYRmid}
 X_R=\max\!\left\{\frac{2N\rho\pi}{N+\rho},2N\phi\right\},
 \qquad
 Y_R=\max\{2\rho\pi,2(N+1)\phi\}.
\end{equation}

\subsection{The right envelope}

For $R_n$ take $f=\cos$ and
$c=\rho\phi+\pi/2-\rho\pi$ in Proposition~\ref{prop:comparison}.
Put $h=\rho\phi$ and $b_R=a-h$.
The first maximum of $J$ is at $\rho\pi-h$, and its first
minimum is at $b_R$. Rotation of the critical integral gives
$J(a)=-D\sin h$. On $[b_R,a]$, the remaining integral is
positive and at most $b_R^{-p}(1-\cos h)$. Thus
\begin{equation}\label{eq:Jright}
 J(b_R)\geq-D\sin h-b_R^{-p}(1-\cos h).
\end{equation}
All subsequent local minima of $J$ increase by pairing its
negative and positive half-waves. Therefore \eqref{eq:Jright}
is a lower bound for $J(x)$ whenever $x$ is beyond its first
maximum.

For any valid $X_R,Y_R$ as above, define
\begin{align}
 M_R&=\frac{\ga(\nu)\sin\delta-b_R^{-p}}p,\label{eq:MR}\\
 f_N&=1+\frac{2-\nu}{12(N+1)},\label{eq:fN}\\
 E_N(X,Y)&=\frac{1/(4u)+1/(3u^2)}{NX^p}
                     +\frac{f_N\pi\nu}{Y^{2-\nu}}.\label{eq:EN}
\end{align}
The right envelope is
\begin{equation}\label{eq:GRdirect}
\begin{split}
 G_R={}&
 \frac{\ga(\nu)\sigma}{pu}
        \cos(\delta+\sigma\phi/2)\sinc(\sigma\phi/2)\\
 &+\frac{M_R\rho^2\phi}{2u}\sinc^2(\rho\phi/2)
       -\frac{a^{-p-1}\rho}{u}\sinc(\rho\phi)\\
 &-\frac{\ga(\nu)\sin(\delta+\sigma\phi)\phi}{6u^{\nu+1}}
       -E_N(X_R,Y_R).
\end{split}
\end{equation}
Indeed, insert \eqref{eq:D} in \eqref{eq:Jright} and use
\eqref{eq:sbound} in \eqref{eq:comparison}.
The gamma terms collect by
\[
 \sin(\delta+\sigma\phi)-\sin\delta
       =\sigma\phi\cos(\delta+\sigma\phi/2)
                                     \sinc(\sigma\phi/2).
\]
The coefficient of $1-\cos h$ is $pM_R$, and
$K(a,p)<a^{-p-1}$. Also
$0<\delta+\sigma\phi\leq\rho\pi/2$.
Since $n\geq N$, all three errors in
\eqref{eq:comparison} are at most $pE_N(X_R,Y_R)$.
Consequently
\begin{equation}\label{eq:Rbound}
 2^\nu\phi^{\nu-1}\ga(\nu)R_n(\phi)\geq pG_R.
\end{equation}

For computations near $\rho=0$, \eqref{eq:GRdirect} has the
following nonsingular form:
\begin{align}
 \widetilde M_R
   &=\frac{\ga_1\pi(1/q-1/2)\sinc\delta-b_R^{-p}}p,\label{eq:MRstable}\\
 V_R&=\pi(1/q-1/2)+(1-1/q)\phi,\label{eq:VR}\\
 G_R={}&
 \frac{\ga_1(1-1/q)}{pu}
       \cos(\delta+\sigma\phi/2)\sinc(\sigma\phi/2)\notag\\
 &+\frac{\widetilde M_R\rho^2\phi}{2u}\sinc^2(\rho\phi/2)
       -\frac{a^{-p-1}\rho}{u}\sinc(\rho\phi)\notag\\
 &-\frac{\ga_1V_R\sinc(\delta+\sigma\phi)\phi}{6u^{\nu+1}}
       -E_N(X_R,Y_R).\label{eq:GRstable}
\end{align}
This is the same expression: $\ga(\nu)\sigma=\ga_1(1-1/q)$,
$\ga(\nu)\sin\delta=\ga_1\pi(1/q-1/2)\sinc\delta$,
and $\delta+\sigma\phi=\nu V_R$.

\subsection{The left envelope}

For $L_n$ take $f=\sin$, $c=\rho\phi$.
The first minimum of $J$ is at $b_L=2\pi-\rho\phi$.
The complete Fourier integral and Lemma~\ref{lem:tail} give
\begin{equation}\label{eq:Jleft}
 J(b_L)\geq\ga(\nu)\sin(\nu\pi/2+\rho\phi)-b_L^{-p}.
\end{equation}
After its first maximum, every value of $J$ is at least
$J(b_L)$, again by half-wave pairing.
Outside the left region in \eqref{eq:geomregions},
$(2n+1)\phi$ is beyond that maximum.
The complete-integral term in \eqref{eq:Jleft} cancels the
corresponding gamma correction in \eqref{eq:comparison}, leaving
\[
 2^\nu\phi^{\nu-1}\ga(\nu)L_n(\phi)
   \geq\frac{\ga(\nu)\sin(\nu\pi/2-\sigma\phi)u^p-b_L^{-p}}
               {\sin\phi}-pE_N(X_L,Y_L).
\]
Set
\begin{align}
 M_L&=\frac{\ga(\nu)\sin(\nu\pi/2-\sigma\phi)u^p-b_L^{-p}}p,
                                                     \label{eq:ML}\\
 G_L&=M_L-E_N(X_L,Y_L).\label{eq:GL}
\end{align}
If $G_L>0$, then $M_L>0$ and $1/\sin\phi\geq1$ gives
\begin{equation}\label{eq:Lbound}
 2^\nu\phi^{\nu-1}\ga(\nu)L_n(\phi)\geq pG_L>0.
\end{equation}
The stable form used at $\rho=0$ is
\begin{equation}\label{eq:MLstable}
 V_L=\pi/2-(1-1/q)\phi,\qquad
 M_L=\frac{\ga_1V_L\sinc(\nu V_L)u^p-b_L^{-p}}p.
\end{equation}

\section{An exact beta estimate for small parameters}

\begin{lemma}\label{lem:beta}
For $\mu>0$ and $n\geq0$, let
$c_n(\mu)=s_n^\mu(1)$. Then
\begin{equation}\label{eq:beta}
 c_n(\mu)=\frac{\ga(n+\mu+1)}{\ga(1+\mu)\ga(n+1)},\qquad
 \frac{s_n^\mu(z)}{c_n(\mu)}
       =\mu\int_0^1t^{\mu-1}(1-t+tz)^n\dd t .
\end{equation}
For $n\geq1$, at the critical parameter put
$x=2n\phi$, $W_n(x)=R_n(\phi)/c_n(\nu)$ and
$C(x)=\nu x^{-\nu}H(x)$. Then
\begin{equation}\label{eq:variance}
 |W_n(x)-C(x)|
    \leq\frac{\nu x^2}{2n(\nu+1)(\nu+2)}
                  +\frac{\rho x}{2n}
    \leq\frac{\rho(x^2+x)}{2n}.
\end{equation}
\end{lemma}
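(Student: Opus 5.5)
The plan has two parts: an exact coefficient computation for \eqref{eq:beta}, then a probabilistic reading of that integral that turns \eqref{eq:variance} into a second-order Taylor estimate for a binomial variable.

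\emph{The beta identity.} The closed form for $c_n(\mu)$ follows from the telescoping identity $(\mu)_k/k!=(\mu+1)_k/k!-(\mu+1)_{k-1}/(k-1)!$ for $k\geq1$. Summing over $k\leq n$ gives $c_n(\mu)=(\mu+1)_n/n!=\ga(n+\mu+1)/(\ga(1+\mu)\ga(n+1))$. For the integral representation, expand $(1-t+tz)^n$ binomially and integrate term by term. The coefficient of $z^k$ is
\[
\binom nk\mu B(\mu+k,n-k+1)=\frac{n!}{k!}\,\frac{\mu\ga(\mu+k)}{\ga(n+\mu+1)}
=\frac{(\mu)_k}{k!}\,\frac{\ga(\mu+1)\,n!}{\ga(n+\mu+1)},
\]
using $\mu\ga(\mu+k)=\ga(\mu+1)(\mu)_k$. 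This is exactly $a_k(\mu)/c_n(\mu)$.

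\emph{Rewriting $W_n$ and $C$.} From \eqref{eq:R},
\[
R_n(\phi)=\Im\bigl(e^{\ii\rho(\pi-\phi)}s_n^\nu(e^{-2\ii\phi})\bigr).
\]
By \eqref{eq:beta}, and interpreting $(1-t+te^{-2\ii\phi})^n=\E\,e^{-2\ii\phi K_t}$ with $K_t\sim\mathrm{Bin}(n,t)$, we get
\[
W_n(x)=\nu\int_0^1 t^{\nu-1}\,\E\sin\bigl(\rho\pi-\rho\phi-2\phi K_t\bigr)\dd t .
\]
On the other side, substitute $t=x\tau$ in \eqref{eq:H}. Since $p=1-\nu$, this gives
\[
C(x)=\nu\int_0^1\tau^{\nu-1}\sin(\rho\pi-x\tau)\dd\tau,
\]
and here $x\tau=2\phi\cdot n\tau=2\phi\,\E K_\tau$. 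So $W_n$ and $C$ are averages of the same sine against the same weight, with the random phase $2\phi K_t$ in one and its mean in the other, plus a shift $\rho\phi$.

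\emph{The estimate.} Put $\alpha=\rho\pi$. I will bound the pointwise difference in two steps.
\begin{itemize}
\item The shift: $|\sin(\alpha-\rho\phi-2\phi K)-\sin(\alpha-2\phi K)|\leq\rho\phi$.
\item The fluctuation: Taylor-expand $\sin(\alpha-2\phi K)$ about $K=nt$. The linear term is $-2\phi\cos(\alpha-2\phi nt)(K-nt)$, which has mean zero. The remainder is at most $2\phi^2(K-nt)^2$.
\end{itemize}
Taking expectations, and using $\E(K_t-nt)^2=nt(1-t)$, the error at each $t$ is at most $\rho\phi+2\phi^2nt(1-t)$. Integrating against $\nu t^{\nu-1}$ and using $\nu\int_0^1t^{\nu}(1-t)\dd t=\nu/((\nu+1)(\nu+2))$ gives
\[
\rho\phi+\frac{2\phi^2n\nu}{(\nu+1)(\nu+2)}.
\]
With $\phi=x/(2n)$ this is the first bound in \eqref{eq:variance}.

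For the second bound I need $\nu/((\nu+1)(\nu+2))\leq\rho$. Lemma~\ref{lem:root} gives $\nu<2\rho$, and $(\nu+1)(\nu+2)>2$, so $\nu/((\nu+1)(\nu+2))<\nu/2<\rho$.

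The main obstacle is spotting the binomial-expectation interpretation and the scaling $t=x\tau$ that aligns $C(x)$ with $W_n(x)$. Once those are in place, the mean-zero linear term makes the variance estimate routine, and the only other input is the root bound $\nu<2\rho$.
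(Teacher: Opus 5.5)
Your proposal is correct and follows the paper's proof essentially step by step: beta-integral coefficient extraction, the conditional binomial representation of $W_n$ against the rescaled integral form of $C$, a mean-zero linear term with a second-order Taylor remainder controlled by the variance $nt(1-t)$, a Lipschitz bound for the extra phase $\rho x/(2n)$, and finally $\nu<2\rho$. The only cosmetic difference is that you obtain the closed form of $c_n(\mu)$ by telescoping. The paper instead reads it off from the integral identity at $z=1$.
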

\begin{proof}
Expand the polynomial in the integral in \eqref{eq:beta}.
The coefficient of $z^k$ is
$\mu\binom nk B(\mu+k,n-k+1)=a_k(\mu)/c_n(\mu)$.
This proves the integral identity and its normalization.

Conditionally on $t$, let $K$ have the binomial distribution
with parameters $n,t$. Then
\[
 W_n(x)=\nu\int_0^1t^{\nu-1}
       \E\sin\left(\rho\pi-\frac{\rho x}{2n}-\frac{xK}{n}\right)
                 \dd t,\qquad
 C(x)=\nu\int_0^1t^{\nu-1}\sin(\rho\pi-xt)\dd t .
\]
The conditional mean of $K/n-t$ is zero and its variance is
$t(1-t)/n$. Taylor's theorem, using $|\sin''|\leq1$, bounds
the contribution from this fluctuation by
$\nu x^2/[2n(\nu+1)(\nu+2)]$.
The extra phase $\rho x/(2n)$ contributes at most its
absolute value. Finally $\nu<2\rho$ and
$(\nu+1)(\nu+2)\geq2$ give the last inequality.
This estimate holds for every $x\geq0$; no asymptotic
expansion in $x/n$ is involved.
\end{proof}

\begin{proposition}\label{prop:precritical}
For $0<\rho\leq2/5$ and $n\geq64$,
$R_n(\phi)>0$ whenever $0<2n\phi\leq2a/3$.
\end{proposition}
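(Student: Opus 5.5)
The plan is to apply the beta/variance estimate of Lemma~\ref{lem:beta}. With $x=2n\phi\in(0,2a/3]$, the normalization $c_n(\nu)>0$ shows that it suffices to prove $W_n(x)>0$. By \eqref{eq:variance} this follows from
\[
 C(x)=\nu x^{-\nu}H(x)>\frac{\rho(x^2+x)}{2n},
\]
and since $n\geq64$ it is enough to show $\nu x^{-\nu}H(x)>\rho(x^2+x)/128$ on $(0,2a/3]$. Lemma~\ref{lem:H} already gives $H>0$ there, because $2a/3<a$ and $a$ is the first positive zero. The work is therefore quantitative: we need a lower bound on $H$ proportional to $\rho$.

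The first regime is small $x$, namely $0<x\leq\rho\pi$, where $\sin(\rho\pi-t)\geq0$. Here we use concavity of the sine on $[0,\pi]$, i.e.\ $\sin(\rho\pi-t)\geq(1-t/(\rho\pi))\sin(\rho\pi)$, and integrate against $t^{-p}$. This gives
\[
 H(x)\geq\sin(\rho\pi)\,\frac{x^{\nu}}{\nu}\Bigl(1-\frac{\nu x}{(\nu+1)\rho\pi}\Bigr),
\]
so that $C(x)\geq\sin(\rho\pi)\bigl(1-\nu x/((1+\nu)\rho\pi)\bigr)$. At $x\to0$ this is of size $\rho\pi$ up to a factor, while the right side $\rho(x^2+x)/128$ is tiny because $x\leq\rho\pi\leq 2\pi/5$. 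We should use $\sin(\rho\pi)\geq 2\rho\cdot$(const) on $\rho\leq2/5$ and $\nu<2\rho$ from \eqref{eq:rootbounds}, which keeps $\nu x/((1+\nu)\rho\pi)$ at most $2x/\pi$. This bound degrades near $x=\rho\pi$, so we will compare directly using $H(\rho\pi)=\int_0^{\rho\pi}t^{-p}\sin(\rho\pi-t)\dd t$. That quantity is of order $\rho^{1+\nu}$ times a constant, matching $\rho x^{\nu}$ on the right side after the factor $x^{-\nu}$.

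The second regime is $\rho\pi\leq x\leq 2a/3$, where $H$ decreases from its maximum toward $0$ at $a$. Writing $H(x)=H(a)+\int_x^a t^{-p}\sin(t-\rho\pi)\dd t$ with $H(a)=0$, the remaining integrand is nonnegative on $[x,a]$ when $x\geq\rho\pi$. Using $t^{-p}\geq a^{-p}$ gives
\[
 H(x)\geq a^{-p}\bigl(1+\cos(x-\rho\pi)\bigr).
\]
For $x\leq2a/3$ we have $x-\rho\pi\leq \pi(2-\rho)/3\cdot$, which stays well below $\pi$, so $1+\cos(x-\rho\pi)\geq 1+\cos(2\pi/3+\rho\pi/3)$ is bounded below by an absolute positive constant (about $0.3$ at $\rho=2/5$, larger at small $\rho$). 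Then $C(x)\geq\nu x^{-\nu}a^{-p}\cdot\mathrm{const}$. Since $\nu>\rho$ and $x\leq 2a/3<2\pi$, this is $\gtrsim\rho$, while the right side is at most $\rho((2a/3)^2+2a/3)/128$. Here $2a/3\leq 2(1.4)\pi/3\approx2.93$, so $x^2+x\approx11.5$ and the right side is about $0.09\rho$. The left side is roughly $\rho\cdot0.3\cdot(2.93)^{-\nu}a^{-p}$, and since $a^{-p}\geq a^{-1}\approx0.23$ this gives only about $0.07\rho\cdot x^{-\nu}$, which is borderline.

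The main obstacle is therefore the constant-level sharpness in this second regime, especially near the endpoint $2a/3$ and near $\rho=2/5$. If the crude bounds fail, we would first sharpen $t^{-p}\geq a^{-p}$ by integrating $t^{-p}$ exactly against $\sin(t-\rho\pi)$ on $[x,a]$. We would also use $p<1-\rho$, so that $a^{-p}\geq a^{-(1-\rho)}$, which is much better than $a^{-1}$ for small $\rho$, and the factor $\nu>\rho$. As a fallback, the paper's framework suggests covering $\rho\in(0,2/5]$ and $x\in(0,2a/3]$ by finitely many boxes and certifying $C(x)-\rho(x^2+x)/128>0$ with the interval arithmetic used elsewhere. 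The singular limit $\rho\to0$ would be handled by factoring out $\rho$ via the stable variables $q=\nu/\rho$ and $\ga_1$ as in \eqref{eq:MRstable}.
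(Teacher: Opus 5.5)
Your route is the paper's route: reduce via \eqref{eq:variance} with $n\geq64$ to $C(x)>\rho(x^2+x)/128$, split at $x=\rho\pi$, and in the second regime use $H(x)=\int_x^a t^{-p}\sin(t-\rho\pi)\dd t\geq a^{-p}\{1+\cos(x-\rho\pi)\}$.

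Your first regime is fine, and your concavity bound is slightly stronger than the paper's Lipschitz bound. There is no degradation at $x=\rho\pi$: the factor $1-\nu x/((1+\nu)\rho\pi)$ is at least $1/(1+\nu)>1/2$ on the whole interval. Hence $C(x)>\sin(\rho\pi)/2\geq\rho$, against an error of at most $0.023\rho$.

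The gap is in the second regime. There you call the crude bound ``borderline'' and defer to unspecified refinements or a new interval certificate, so the proof is not actually finished. That verdict comes from two slips.

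First, $2a/3-\rho\pi=(2-\rho)\pi/3=2\pi/3-\rho\pi/3$, not $2\pi/3+\rho\pi/3$. Since $0\leq x-\rho\pi\leq(2-\rho)\pi/3<\pi$ and cosine decreases on $[0,\pi]$, you get $1+\cos(x-\rho\pi)\geq1+\cos(2\pi/3)=1/2$ uniformly. The worst case is $\rho\to0$, not $\rho=2/5$, and the constant is not about $0.3$.

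Second, do not bound $a^{-p}$ and $x^{-\nu}$ separately. Since $p=1-\nu$ and $x\leq2a/3<a$,
\[
 \nu x^{-\nu}a^{-p}=\frac{\nu}{a}\Bigl(\frac{a}{x}\Bigr)^{\nu}>\frac{\nu}{a}>\frac{\rho}{a}.
\]
With $a\leq A:=7\pi/5$ for $\rho\leq2/5$, this gives $C(x)>\rho/(2A)>0.1136\,\rho$. The variance error is at most $\rho\bigl((2A/3)^2+2A/3\bigr)/128<0.0901\,\rho$, so $W_n(x)>0.023\,\rho>0$ and hence $R_n(\phi)=c_n(\nu)W_n(x)>0$.

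This is exactly the paper's computation, and it closes the proposition with no sharper integration of $t^{-p}$ and no additional interval certification.
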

\begin{proof}
The strictly certified root bracket at $\rho=2/5$ gives
$\nu(2/5)<3/5$; see Proposition~\ref{prop:certsmall}.
Lemma~\ref{lem:root} then implies $\nu<3/5$ throughout this
range. Put $A=7\pi/5$, so $a\leq A$.

If $0<x\leq\rho\pi$, the Lipschitz bound for the sine gives
$C(x)\geq\sin(\rho\pi)-x\nu/(\nu+1)$.
Since $\rho\pi\leq2\pi/5<\pi/2$,
$\sin(\rho\pi)\geq2\rho$, whence
\[
 \frac{W_n(x)}{\rho}
 \geq 2-\frac{3\pi}{8}
        -\frac{(2\pi/5)^2+2\pi/5}{128}>0.
\]
If $\rho\pi\leq x\leq2a/3$, the root equation gives
\[
 H(x)=\int_x^a t^{-p}\sin(t-\rho\pi)\dd t
      \geq a^{-p}\{1-\cos(a-x)\}.
\]
Here $a/3\leq a-x\leq\pi$, so the braces are at least $1/2$.
As $\nu>\rho$,
$\nu x^{-\nu}a^{-p}=(\nu/a)(a/x)^\nu>\rho/a$.
Therefore
\[
 \frac{W_n(x)}{\rho}
 \geq\frac1{2A}
       -\frac{(2A/3)^2+2A/3}{128}>0 .
\]
The two last numerical expressions have certified lower
bounds greater than $0.7997$ and $0.0236$, respectively.
\end{proof}

For the rest of the right half-circle use
$x=2n\phi\geq2a/3$ and, with $N=64$, take
\begin{equation}\label{eq:XYRsmall}
 X_R=\max\{2a/3,2N\phi\},\qquad
 Y_R=\max\{2a/3+2\phi,2(N+1)\phi\}.
\end{equation}
This scale bound is beyond the first maximum $\rho\pi-\rho\phi$,
so \eqref{eq:Rbound} applies.
The left envelope continues to use \eqref{eq:XYL}.

\begin{proposition}[Small-parameter large-degree certificate]\label{prop:certsmall}
With $N=64$, the envelopes \eqref{eq:GRstable} and
\eqref{eq:GL}, \eqref{eq:MLstable}, using
\eqref{eq:XYRsmall} and \eqref{eq:XYL}, are strictly positive
for every $0<\rho\leq2/5$ and $0\leq\phi\leq\pi/2$
at $\nu=\nu(\rho)$. Also $\nu(2/5)<3/5$.
\end{proposition}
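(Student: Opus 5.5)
This is a computer-assisted certificate, and I would prove it by a validated interval branch-and-bound over the rectangle $(\rho,\phi)\in(0,2/5]\times[0,\pi/2]$. The first task is to make everything rigorous as a function of $\rho$ alone. The critical root $\nu(\rho)$ is only implicitly defined, so I would enclose it on each $\rho$-cell $[\rho_0,\rho_1]$. By Lemma~\ref{lem:root}, $\nu$ is strictly increasing, so an enclosure $[\nu^-(\rho_0),\nu^+(\rho_1)]$ suffices. Each endpoint is certified by checking strict signs of $I(\cdot,\rho)$ at rational trial values, using the uniqueness of the upward crossing.

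To evaluate $I(\mu,\rho)$ in ball arithmetic, I would use
\[
 I=\Im\bigl(e^{-\ii\rho\pi}\ga(\mu)e^{\ii\mu\pi/2}\bigr)-\Im\bigl(e^{-\ii\rho\pi}\textstyle\int_a^\infty t^{\mu-1}e^{\ii t}\dd t\bigr).
\]
The tail is evaluated through the absolutely convergent Laplace form of Lemma~\ref{lem:tail}. Alternatively, I would integrate on $[0,a]$ directly, treating the $t^{\mu-1}$ singularity by a series expansion near $0$. The separately stated claim $\nu(2/5)<3/5$ is one such sign check, namely $I(3/5,2/5)>0$.

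Near $\rho=0$ the direct envelopes are singular: $\sigma/p$, $\sin\delta/p$, and $\nu\to0$. That is why the statement uses the stable forms \eqref{eq:GRstable} and \eqref{eq:MLstable}. For these I would enclose the smooth quantity $q=\nu/\rho$ rather than $\nu$. From \eqref{eq:rootbounds}, $1<q<2$. The limit $q(0^+)$ is determined by the rescaled root equation. Substituting $t=\rho\pi\tau$ and letting $\rho\to0$, the equation becomes, to leading order, a condition of the form $\int_0^{1/\rho\ldots}$ which collapses to an explicit limit, in effect $\ga(\nu)\sin\delta\approx A_2$ with $\ga(\nu)\sim1/\nu$.

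I would therefore split off a small interval $(0,\rho_*]$. On it, I would bound $q$ by an analytic expansion with explicit remainder, obtained from the identity $\ga_1\pi(1/q-1/2)\sinc\delta=A_2$. Here $A_2$ is controlled by $a^{-p}$ and the Laplace integral $K$. On the remaining cells $[\rho_*,2/5]$, ordinary validated root bracketing applies.

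With $q$, $\nu$, $p$, $\delta$ and $\ga_1$ enclosed, I would evaluate $G_R$ and $G_L$ on boxes in $\phi$. Here $X_R,Y_R$ come from \eqref{eq:XYRsmall} and $X_L,Y_L$ from \eqref{eq:XYL}, with $N=64$. Each box is bisected adaptively until the lower bound is strictly positive.

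The functions involved, $\sinc$, $u^{p}$ and $u^{-\nu-1}$, are monotone or easily bounded in $\phi$. Since $u\ge2/\pi$, the error term $E_N$ is small. The main margin comes from the leading terms: $\ga_1(1-1/q)/(pu)$ on the right, and $\ga_1V_L\sinc(\nu V_L)$ on the left. At $\phi=0$ the left envelope reduces to
\[
 \bigl(\ga_1\tfrac\pi2\sinc(\nu\pi/2)-(2\pi)^{-p}\bigr)/p - E_N .
\]
As $\rho\to0$, both the numerator and $p$ tend to $0$, since $\ga_1\tfrac\pi2\sinc(\nu\pi/2)\to\pi/2$.

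I expect the main obstacle to be this corner $\rho\to0$. There the factor $1/p$ makes the envelopes $0/0$-type limits, so interval evaluation loses all precision and the naive certificate fails. I would first try to divide out $p$ analytically: write $b^{-p}=e^{-p\log b}$ and use $\exprel$-type expansions $(1-e^{-p\log b})/p$. Similarly, I would express $\ga_1$ and $\sinc$ in terms of $\nu=\rho q$ with Taylor remainders. This turns $M_L$ and $\widetilde M_R$ into expressions that are continuous at $\rho=0$ and can be boxed uniformly. I would then verify positivity of the limiting envelope at $\rho=0$ with a strict margin. That margin, combined with Lipschitz bounds in $\rho$, covers $(0,\rho_*]$.

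The final step is bookkeeping. I would record a rational cover of the rectangle, each box with a certified positive lower bound, for both envelopes.
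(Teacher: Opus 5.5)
Your architecture is the paper's. On $[d_0,2/5]$ (the paper takes $d_0=10^{-5}$) you use rational parameter cells whose endpoint roots are bracketed by strict signs of $I$ and propagated by monotonicity of $\nu$, adaptive angle covers for the stable envelopes, and $\nu(2/5)<3/5$ read off from the upper bracket. On $[0,d_0]$ you enclose $q$ and then evaluate \eqref{eq:GRstable}, \eqref{eq:MLstable}.

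However, your diagnosis of the corner $\rho\to0$ is wrong.
\begin{itemize}
\item There $\nu\to0$, so $p=1-\nu\to1$, not $0$. In fact $p>2/5$ on the whole range because $\nu<3/5$.
\item At $\phi=0$ the left numerator tends to $\pi/2-1/(2\pi)>0$, so there is no $0/0$ in $1/p$ and nothing for $\exprel$ to do.
\item The $\exprel$ device is the paper's remedy at the opposite endpoint $\rho\to1$, where $p\to0$; see \eqref{eq:exprelR}, \eqref{eq:exprelL}.
\item The only singularity at $\rho\to0$ is $\ga(\nu)\sim1/\nu$ multiplying the vanishing factors $\sigma$, $\sin\delta$ and $\sin(\nu\pi/2-\sigma\phi)$. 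The stable forms have already removed it, via $\ga(\nu)\sigma=\ga_1(1-1/q)$ and $\ga(\nu)\sin\delta=\ga_1\pi(1/q-1/2)\sinc\delta$.
\end{itemize}
So once $q$ is enclosed, you can evaluate the stable forms directly on the box $\rho\in[0,d_0]$, $q\in[q_-,q_+]$, which contains $\rho=0$. The paper does exactly this with $64$ angle cells per side. Appendix~\ref{app:small} backs it up with the constant bounds $G_R>1/8$, $G_L>1/3$. This replaces your ``positivity at $\rho=0$ plus Lipschitz in $\rho$'' step. If you keep that step, treat $q$ as an independent interval variable, since no bound on $q'(\rho)$ is available at this stage. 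The misdiagnosis costs you unnecessary work but does not open a gap.

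Where you genuinely differ is the enclosure of $q$ near $0$.
\begin{itemize}
\item The paper uses the normalized root function $F(q,\rho)=\nu I/(\rho a^\nu)$ of \eqref{eq:F}. It is continuous at $\rho=0$ with $F(q,0)=q\Si(\pi)-\pi$, and $F_q>0$ at the root. Strict signs at $q=212/125$ and $q=1697/1000$ for all $\rho\in[0,d_0]$, computed from \eqref{eq:CSseries}, give a bracket of width below $10^{-3}$.
\item Your identity $\ga_1\pi(1/q-1/2)\sinc\delta=A_2$ is also exact at the root. Here $A_2=\ga(1-\nu)^{-1}\int_0^\infty e^{-as}s^{-\nu}(1+s^2)^{-1}\dd s$; it is $A_1=pK$, not $A_2$, that involves $K$.
\item Your route is more economical than you suggest. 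The crude facts $0<A_2<a^{-p}$, $\ga_1\geq e^{-\gamma\nu}$ and $0<\delta<\rho\pi/2$ already give roughly $1.66<q<2$ for very small $\rho$. The constant estimates of Appendix~\ref{app:small} stay positive with this looser interval, with margins of roughly $0.1$ on the right and $0.28$ on the left.
\end{itemize}
So your route avoids any series evaluation of the root equation near $0$, at the price of smaller margins. The paper's tight bracket buys reuse: the same box $q\in[212/125,1697/1000]$ is needed again to certify $F_\rho>0$ near $\rho=0$ in Proposition~\ref{prop:ratio}.
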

\begin{proof}[Certificate verification]
For $d_0=1/100000$, the nonsingular root equation of
Section~\ref{sec:finite} certifies
\begin{equation}\label{eq:smallq}
 \frac{212}{125}<q(\rho)<\frac{1697}{1000}
                   \qquad(0<\rho\leq d_0).
\end{equation}
Evaluate the stable formulas with the complete parameter
intervals $\rho\in[0,d_0]$, $q\in[212/125,\allowbreak1697/1000]$,
and $\nu=q\rho$. Each side is certified on $64$ angle intervals.
Appendix~\ref{app:small} also gives uniform constant bounds
$G_R>1/8$ and $G_L>1/3$ for this range.

On $[d_0,2/5]$, the file
\file{scale_certificate.json} specifies $244$ rational
parameter intervals. At each endpoint, the bracket
$\nu_-<\nu(\rho)<\nu_+$ is verified by strict opposite signs
of $I$, using the Taylor remainder bounds in
Appendix~\ref{app:roots}. Root monotonicity encloses the
critical curve throughout every interval. The left and right
angle covers have $15616$ cells each. Re-evaluation of the
stable formulas with ball arithmetic gives a strictly
positive lower endpoint on every cell. All adjacent endpoints
and full coverage are checked as exact rational numbers.
The upper bracket at $\rho=2/5$ is smaller than $3/5$.

The alternative verifier \file{check_extension.py} uses
\eqref{eq:GRdirect} and \eqref{eq:ML}, without importing the
generator. It also verifies root signs by a different
integrated Taylor formula. With additional subdivisions
where needed, $246$ parameter cells and $31493$ angle cells
pass. The source, rational input cells and run reports are
included in the supplementary files.
\end{proof}

\begin{corollary}\label{cor:smalllarge}
For $0<\rho\leq2/5$ and $n\geq64$,
$L_n(\phi)>0$ and $R_n(\phi)>0$ for $0<\phi\leq\pi/2$.
\end{corollary}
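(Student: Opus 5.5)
The corollary assembles results already proved, so the plan is bookkeeping: for each $\phi\in(0,\pi/2]$ and $n\geq64$, identify which earlier estimate covers the pair $(n,\phi)$. Fix $0<\rho\leq2/5$ and $\nu=\nu(\rho)$. By Lemma~\ref{lem:root} and the bracket $\nu(2/5)<3/5$ from Proposition~\ref{prop:certsmall}, we have $\rho<\nu<3/5<1$. Hence $a_k(\nu)$ is strictly decreasing and Lemma~\ref{lem:H} and Proposition~\ref{prop:comparison} apply.

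\emph{Left sum.} If $0<\phi\leq\pi/(n+1)$, then $L_n(\phi)>0$ by the geometric region \eqref{eq:geomregions}. Otherwise $\phi>\pi/(n+1)$, so $2n\phi\geq 2n\pi/(n+1)\geq 2N\pi/(N+1)$ with $N=64$, and $2(n+1)\phi>2\pi$. Since $n\geq N$, also $2n\phi\geq2N\phi$. Thus the quantities $X_L,Y_L$ of \eqref{eq:XYL} are valid lower bounds for $2n\phi$ and $2(n+1)\phi$. We also have $f_n\leq f_N$, and the error terms $A_n,B_n,C_n$ of \eqref{eq:errors} decrease when $n$ and the scales are replaced by $N$ and these lower bounds. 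Hence every error is at most $pE_N(X_L,Y_L)$. Moreover $(2n+1)\phi>2\pi\geq$ the first maximum of $J$, so \eqref{eq:Jleft} applies. Then \eqref{eq:Lbound} holds, and $G_L>0$ by Proposition~\ref{prop:certsmall}, giving $L_n(\phi)>0$.

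\emph{Right sum.} Split by $x=2n\phi$. If $x\leq 2a/3$, then Proposition~\ref{prop:precritical} gives $R_n(\phi)>0$ directly. If $x\geq2a/3$, then $2n\phi\geq\max\{2a/3,2N\phi\}=X_R$ and $2(n+1)\phi\geq\max\{2a/3+2\phi,2(N+1)\phi\}=Y_R$, which are the choices \eqref{eq:XYRsmall}. Since $2a/3=2(1+\rho)\pi/3>\rho\pi>\rho\pi-\rho\phi$ for $\rho\leq2/5$, the argument $(2n+1)\phi$ lies beyond the first maximum of $J$. So \eqref{eq:Jright} is a valid lower bound for $J((2n+1)\phi)$, and the derivation of \eqref{eq:Rbound} goes through with these $X_R,Y_R$. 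The certified positivity of $G_R$ in \eqref{eq:GRstable}, which coincides with \eqref{eq:GRdirect}, then yields $R_n(\phi)>0$.

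The main point needing care is the monotonicity used to dominate the degree-$n$ errors by $E_N(X,Y)$. Each of $A_n$, $B_n$ and $f_nC_n$ is a decreasing function of $n$ and of the scale variables $2n\phi$ and $2(n+1)\phi$, with $u$ fixed. One must check that the factor $n(2n\phi)^p$ is bounded below by $NX^p$, using $n\geq N$ and $2n\phi\geq X$ separately. The case split is also complete: every $(n,\phi)$ with $n\geq64$ falls in one of the covered regions, and the boundary $\phi=\pi/2$ is included in the certified range $0\leq\phi\leq\pi/2$.
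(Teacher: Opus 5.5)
Your proposal is correct and follows the paper's own proof: the geometric region together with \eqref{eq:Lbound} for the left sum, and for the right sum Proposition~\ref{prop:precritical} when $2n\phi\leq2a/3$ and \eqref{eq:Rbound} with the certified $G_R$ when $2n\phi\geq2a/3$; you simply spell out the bookkeeping. One small slip: $\phi>\pi/(n+1)$ only gives $(2n+1)\phi>2\pi-\pi/(n+1)$, not $(2n+1)\phi>2\pi$, but this still exceeds the first maximum $\pi-\rho\phi$ of $J$, which is all that \eqref{eq:Jleft} requires.
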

\begin{proof}
Use \eqref{eq:geomregions} and \eqref{eq:Lbound} for the left
sum. For the right sum, Proposition~\ref{prop:precritical}
covers $2n\phi\leq2a/3$, while Proposition~\ref{prop:certsmall}
and \eqref{eq:Rbound} cover $2n\phi\geq2a/3$.
\end{proof}

\section{The root ratio and the remaining small-parameter degrees}
\label{sec:finite}

\subsection{A normalized root equation}

For $\nu=q\rho$ and $a=(1+\rho)\pi$, set
\begin{align}
 C_0(\nu,a)&=\int_0^1t^{\nu-1}(\cos(at)-1)\dd t,\notag\\
 S_0(\nu,a)&=\int_0^1t^{\nu-1}\sin(at)\dd t.\label{eq:CS}
\end{align}
Both expressions are continuous at $\nu=0$. The equation
for the root, with its removable singularity extracted, is
\begin{equation}\label{eq:F}
\begin{split}
 F(q,\rho)&=\frac{\nu I(\nu,\rho)}{\rho a^\nu}\\
 &=q\cos(\rho\pi)S_0(\nu,a)
       -\pi\sinc(\rho\pi)\{1+\nu C_0(\nu,a)\}.
\end{split}
\end{equation}
At a root,
\begin{equation}\label{eq:Fq}
 F_q=\frac{\nu I_\mu(\nu,\rho)}{a^\nu}>0.
\end{equation}
The strict signs of \eqref{eq:F} at the two constants in
\eqref{eq:smallq}, over $\rho\in[0,d_0]$, prove that bound.
They are evaluated using \eqref{eq:CSseries} below.

An integrated exponential Taylor formula is
\begin{equation}\label{eq:Fexp}
 F(q,\rho)=-\pi\sinc(\rho\pi)
      +q\sum_{k=1}^{63}
        \frac{a^k\sin(k\pi/2-\rho\pi)}{k!(\nu+k)}
      +\mathcal R ,
 \qquad
 |\mathcal R|\leq
       \frac{q a^{64}}{64!(\nu+64)}.
\end{equation}
For fixed $q$, the remainder derivative is bounded by
\begin{equation}\label{eq:Fremder}
 |\partial_\rho\mathcal R|
 \leq \frac{q\pi}{\nu+64}
       \left(\frac{a^{63}}{63!}+\frac{a^{64}}{64!}\right)
          +\frac{q^2a^{64}}{64!(\nu+64)^2}.
\end{equation}
These are derivative bounds for the integral remainder, not
derivatives of a scalar error estimate. They follow by
differentiating the exponential remainder in $a$, its phase
$e^{-\ii\rho\pi}$, and the weight $t^{q\rho-1}$.
The identity
\[
 \sinc'(x)=-\int_0^1t\sin(tx)\dd t
\]
gives, after retaining terms through $x^{47}$, the error
$|x|^{49}/(49!\,51)$ for its derivative series.

\begin{proposition}[Root-ratio certificate]\label{prop:ratio}
The function $q(\rho)=\nu(\rho)/\rho$ is strictly decreasing
on $(0,2/5]$, and
\begin{equation}\label{eq:qzero}
 \lim_{\rho\downarrow0}q(\rho)=
 \kappa:=\frac{\pi}{\Si(\pi)}
       <1.6963819856765,
 \qquad
 \Si(x)=\int_0^x\frac{\sin t}{t}\dd t .
\end{equation}
\end{proposition}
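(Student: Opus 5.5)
The proposition has two parts, and I will handle them separately. For the limit I pass to $\rho\downarrow0$ in the normalized root equation \eqref{eq:F}. As $\rho\to0$ we have $a\to\pi$, $\nu=q\rho\to0$ (because $q$ stays bounded by \eqref{eq:rootbounds}, $1<q<2$), $\cos(\rho\pi)\to1$, $\sinc(\rho\pi)\to1$, and $S_0(\nu,a)\to\int_0^1 t^{-1}\sin(\pi t)\dd t=\Si(\pi)$. The term $\nu C_0(\nu,a)\to0$, since $C_0$ is continuous at $\nu=0$ (its integrand is $O(t^{\nu+1})$ at $0$). So any limit point $q_0\in[1,2]$ of $q(\rho)$ satisfies $q_0\Si(\pi)-\pi=0$, which gives $q_0=\kappa$.

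To make this rigorous, I will note that $F$ extends continuously to $\rho=0$ with $F(q,0)=q\Si(\pi)-\pi$, which is strictly increasing in $q$. A compactness argument then gives $q(\rho)\to\kappa$. The numerical bound $\kappa<1.6963819856765$ amounts to a certified lower bound on $\Si(\pi)$. I will get it from the alternating series $\Si(\pi)=\sum_{j\ge0}(-1)^j\pi^{2j+1}/((2j+1)(2j+1)!)$, truncated with an explicit remainder and evaluated in ball arithmetic. This is consistent with \eqref{eq:smallq}.

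For strict monotonicity I differentiate the identity $F(q(\rho),\rho)=0$ implicitly, which gives $q'(\rho)=-F_\rho/F_q$. By \eqref{eq:Fq}, $F_q>0$ at the root, so it suffices to show $F_\rho(q,\rho)>0$ along the critical curve. Here $F_\rho$ means the partial derivative in $\rho$ with $q$ fixed; it includes the dependence through $\nu=q\rho$ and $a=(1+\rho)\pi$. I will not try to prove the sign of $F_\rho$ analytically on all of $(0,2/5]$. Instead I will certify it on a rational cover of $[0,2/5]$ in $\rho$. On each cell, $q$ is enclosed by a bracket $[q_-,q_+]$ verified by opposite signs of $F$ at the cell endpoints, exactly as in Proposition~\ref{prop:certsmall}. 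Root monotonicity in $\nu$ from Lemma~\ref{lem:root} lets these endpoint brackets enclose the curve across each cell.

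$F_\rho$ is computed by differentiating \eqref{eq:Fexp} term by term in $\rho$. The remainder derivative is controlled by \eqref{eq:Fremder}, and $\sinc'(\rho\pi)$ is obtained from its truncated series with error $|x|^{49}/(49!\,51)$. Ball arithmetic over the box (cell)$\times[q_-,q_+]$ then has to give a strictly positive lower endpoint.

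I expect the main obstacle near $\rho=0$. There the first-order dependence may be weak, so $F_\rho$ is not obviously bounded away from zero, and interval enclosures of the series with $\nu\to0$ lose cancellation. I would first expand $F$ to first order in $\rho$ on $[0,d_0]$ and check that $F_\rho(\kappa,0)$ is a nonzero explicit constant. That constant involves $\int_0^1\log t\,\sin(\pi t)/t\dd t$, $\cos(\pi t)$ terms, and $-\pi$ from $\partial_\rho a$ acting on $S_0$. With that in hand, one wide interval with a Lipschitz bound would cover $[0,d_0]$, and the finer cover handles $[d_0,2/5]$, where $F_\rho$ is expected to stay comfortably positive. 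If $F_\rho(\kappa,0)$ instead vanished, I would fall back on the second-order term to get the sign on $[0,d_0]$.
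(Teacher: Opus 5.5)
Your proposal is correct and is essentially the paper's proof. The limit follows from the continuous extension $F(q,0)=q\Si(\pi)-\pi$ together with $1<q<2$; strict decrease follows from $q'=-F_\rho/F_q$, with $F_q>0$ on the root curve and $F_\rho>0$ certified in ball arithmetic from the differentiated expansion \eqref{eq:Fexp} and \eqref{eq:Fremder}, on a rational cover of $[d_0,2/5]$ with endpoint root brackets plus the single box $\rho\in[0,d_0]$, $q\in[212/125,1697/1000]$. One correction to your heuristic at $\rho=0$: you need $F_\rho(\kappa,0)>0$, not merely $\neq0$, and $\partial_\rho a$ contributes nothing there because $\partial_aS_0(0,\pi)=\int_0^1\cos(\pi t)\dd t=0$; hence $F_\rho(q,0)=q^2\int_0^1t^{-1}\log t\,\sin(\pi t)\dd t-\pi q\,C_0(0,\pi)$, which is about $1.13$ at $q=\kappa$.
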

\begin{proof}
Using the derivative of \eqref{eq:Fexp}, together with
\eqref{eq:Fremder}, the file
\file{ratio_monotone_certificate.json} certifies
$F_\rho>0$ on $1240$ rational parameter intervals covering
$[d_0,2/5]$. Critical roots at all endpoints are enclosed by
strictly checked rational brackets. On the remaining interval,
the same strict inequality is checked with
$\rho\in[0,d_0]$ and
$q\in[212/125,\allowbreak1697/1000]$.

For an alternative verification, \file{check_extension.py}
uses the separate sine and cosine integrals
\eqref{eq:CS}, their $\nu$ derivatives, and
\[
 \partial_a C_0(\nu,a)=-S_0(\nu+1,a),\qquad
 \partial_a S_0(\nu,a)=\frac1{\nu+1}+C_0(\nu+1,a).
\]
The derivative remainders are obtained by integrating the
Taylor bounds against $t^{\nu-1}|\log t|$.
All $1240$ parameter intervals and the endpoint interval
give strictly positive lower bounds.
Since \eqref{eq:Fq} holds on the actual root curve,
$q'=-F_\rho/F_q<0$.

By \eqref{eq:rootbounds}, $1<q<2$. The equation
\eqref{eq:F} extends continuously to $\rho=0$ with value
$q\Si(\pi)-\pi$. Thus every accumulation point of $q$ is
$\kappa$, giving the limit in \eqref{eq:qzero}.
The stated rational upper bound is verified by ball
evaluation of $\pi/\Si(\pi)$.
\end{proof}

\subsection{Finite prefix enclosures}

Let $[l,h]\subset[0,2/5]$ be a rational parameter interval.
By Proposition~\ref{prop:ratio}, its critical root ratio is
enclosed by
\begin{equation}\label{eq:qbox}
 q_-=\frac{\nu_-(h)}h,\qquad
 q_+=
 \begin{cases}
  \nu_+(l)/l,&l>0,\\
  1.6963819856765,&l=0.
 \end{cases}
\end{equation}
Here the subscripts denote verified root brackets.
Use the complete intervals
$\rho\in[l,h]$, $q\in[q_-,q_+]$ and $\nu=q\rho$.

For $k\geq1$, the normalized coefficient
\begin{equation}\label{eq:bk}
 b_k=\frac{a_k(\nu)}{\rho}
       =\frac qk\prod_{m=1}^{k-1}\left(1+\frac{\nu}{m}\right)
\end{equation}
is nonsingular at $\rho=0$. The normalized zero terms of
$L_j$ and $R_j$ are respectively
$\phi\sinc(\rho\phi)$ and
$(\pi-\phi)\sinc(\rho(\pi-\phi))$.

For either normalized sum $P_j=L_j/\rho$ or $R_j/\rho$,
a uniform second-derivative bound on an angle interval is
\begin{equation}\label{eq:second}
 |P_j''(\phi)|\leq M_j^{(2)}
       :=\rho+\sum_{k=1}^j b_k(2k+\rho)^2 .
\end{equation}
If $m$ and $r$ are the midpoint and radius of that interval,
Taylor's theorem encloses the entire interval by
\begin{equation}\label{eq:Taylorbox}
 P_j(m)+P_j'(m)[-r,r]
       +[-M_j^{(2)}r^2/2,M_j^{(2)}r^2/2].
\end{equation}
Every quantity in this formula is evaluated on the complete
parameter intervals. It is therefore an interval enclosure
in both variables.

\begin{proposition}[Small-parameter prefix certificate]\label{prop:prefix}
For $0<\rho\leq2/5$, $1\leq j\leq63$, and
$0<\phi\leq\pi/2$, both $L_j(\phi)$ and $R_j(\phi)$
are strictly positive.
\end{proposition}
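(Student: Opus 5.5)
The proof is a finite computer-assisted certificate built from the tools of Section~\ref{sec:finite}. Cover $[0,2/5]$ by rational parameter intervals $[l,h]$. On each interval the root ratio is enclosed by the box \eqref{eq:qbox}. This box is justified by two facts: the monotonicity of $q$ from Proposition~\ref{prop:ratio}, and verified root brackets at the endpoints, obtained by strict opposite signs of $F$ via \eqref{eq:Fexp}. On the first interval $[0,d_0]$ we use instead the uniform bracket \eqref{eq:smallq} together with $q_+=1.6963819856765$. We then work throughout with the normalized sums $P_j=L_j/\rho$ and $P_j=R_j/\rho$. Their coefficients $b_k$ from \eqref{eq:bk} and their zero-order terms $\phi\sinc(\rho\phi)$ and $(\pi-\phi)\sinc(\rho(\pi-\phi))$ are analytic at $\rho=0$. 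Hence a single ball-arithmetic evaluation over the full box $\rho\in[l,h]$, $q\in[q_-,q_+]$, $\nu=q\rho$ encloses $P_j$ for every admissible critical pair simultaneously, including the limit $\rho\downarrow0$.

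For each $j\in\{1,\dots,63\}$ and each parameter box, split $(0,\pi/2]$ into angle cells. On a cell with midpoint $m$ and radius $r$, enclose $P_j$ by the Taylor box \eqref{eq:Taylorbox}, using the second-derivative bound \eqref{eq:second}, and certify that its lower endpoint is positive. Cells are refined adaptively by bisection, in $\phi$ and if necessary in $\rho$, until they pass. Full coverage and adjacency of the cells are then checked in exact rational arithmetic, as was done for Proposition~\ref{prop:certsmall}.

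Near $\phi=0$ every $P_j$ vanishes, so no positive lower endpoint can be certified there. This region is excluded analytically, and this is the main obstacle. By \eqref{eq:geomregions}, $L_j>0$ for $\phi\leq\pi/(j+1)$ and $R_j>0$ for $\phi\leq\rho\pi/(j+\rho)$. The first region is harmless, because $\pi/(j+1)\geq\pi/64$ is bounded away from $0$, so the left cover starts at $\pi/(j+1)$. The right region, however, shrinks with $\rho$. For $R_j$ I would therefore use the normalized variable directly. The quantity $R_j/\rho$ at $\rho\to0$ tends to $(\pi-\phi)+\sum_k b_k|_{\rho=0}\,(\pi-2k\phi)$, which is positive near $\phi=0$. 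So on $[\rho\pi/(j+\rho),\phi_0]$ the Taylor box in the normalized variables should already be positive uniformly in $\rho\in[0,d_0]$, because the factor $\rho$ has been divided out.

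A second delicate spot is any angle where $P_j$ has a near-zero minimum for $\rho$ close to $2/5$ and $j$ small. Near the critical parameter the extremal sums approach zero only asymptotically as $j\to\infty$, so for $j\leq63$ the margin should be positive but possibly small. There I expect to need heavy subdivision, and I would first try refining $\rho$ near $2/5$ and $\phi$ near $\pi/2$. Finally, the certificate should be cross-checked with an independent evaluation, for instance direct summation of $\sin((2k+\rho)\phi)$ with the unnormalized $a_k(\nu)$ on $[d_0,2/5]$, mirroring \file{check_extension.py}.
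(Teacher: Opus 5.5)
Your plan is essentially the paper's certificate. Both use a rational cover of $[0,2/5]$ with the root-ratio box \eqref{eq:qbox} (from strict root signs and Proposition~\ref{prop:ratio}), the normalized coefficients \eqref{eq:bk}, Taylor boxes \eqref{eq:Taylorbox} with the bound \eqref{eq:second}, the left geometric region for $\phi\leq\pi/64$, and exact rational coverage checks.

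One correction to your commentary, though it does not affect validity because the certificate itself is the proof: $R_j$ does not vanish at $\phi=0$, since $R_j(0)/\rho=\pi\sinc(\rho\pi)\,s_j^\nu(1)\to\pi$. Moreover, the $\rho\to0$ limit of $R_j/\rho$ is $(\pi-\phi)-\kappa\sum_{k=1}^j\sin(2k\phi)/k$, not your formula. Consequently the paper needs no right geometric region at all. The tight right cells lie near $2j\phi\approx a$ for the larger $j$, where the normalized margin is only of order $1/j$ as $\rho\to0$, rather than near $\rho=2/5$ and $\phi=\pi/2$.
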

\begin{proof}[Certificate verification]
The file \file{prefix_monotone_certificate.json} contains
$512$ rational parameter intervals covering $[0,2/5]$.
For each interval, \eqref{eq:qbox} is re-established from
strict root signs and Proposition~\ref{prop:ratio}.
For every angle cell, \eqref{eq:Taylorbox} is evaluated for
all $j=1,\ldots,63$ at once. The left angle range
$\phi\leq\pi/64$ is covered by \eqref{eq:geomregions};
all other cells have strictly positive Taylor lower bounds.
The saved covers contain $70036$ left cells and $107910$
right cells.

The alternative verifier constructs $b_k$ by the Euler
product \eqref{eq:bk}, and the sine phases by complex powers
of $e^{2\ii\phi}$ or $e^{-2\ii\phi}$ instead of separate
sine evaluations. It rechecks the root and ratio-derivative
certificates before checking the prefixes.
Its accepted cover has $512$ parameter cells and $178630$
angle cells. Exact rational coverage is checked independently
of the saved lower-bound strings.
\end{proof}

\section{The middle parameter range and the upper endpoint}

\subsection{The compact middle range}

For $2/5\leq\rho\leq999/1000$ take $N=8$ and use
\eqref{eq:XYL}, \eqref{eq:XYRmid} in the large-degree envelopes.
An equivalent expression is useful for the first term of
$G_R$. Write
\[
 \eps=1-\rho,\qquad \lambda=\nu-2\rho+1=2\eps-p .
\]
Then
\begin{equation}\label{eq:GRupperfirst}
\begin{split}
 &\frac{\ga(\nu)\sigma}{pu}
          \cos(\delta+\sigma\phi/2)\sinc(\sigma\phi/2)\\
 &\quad =
 \frac{\ga(\nu)}{2u}
   \left(\frac{\sigma\eps}{p}\pi
              +\frac{\sigma^2}{p}(\pi-\phi)\right)
   \sinc\left(\frac{\eps\pi+\sigma(\pi-\phi)}2\right)
   \sinc(\sigma\phi/2).
\end{split}
\end{equation}
Indeed $\delta=\pi/2-\lambda\pi/2$, and
$\lambda\pi-\sigma\phi=\eps\pi+\sigma(\pi-\phi)$.
For subtractive gamma terms the generator uses
$e^x-1$ to evaluate, for example,
\[
 M_R=\frac{\exp\{\log\ga(\nu)+\log\cos(\lambda\pi/2)\}
                   -\exp\{-p\log b_R\}}p .
\]

\begin{proposition}[Middle-parameter certificate]\label{prop:middle}
For $2/5\leq\rho\leq999/1000$, all critical boundary sums
$L_n$ and $R_n$ are strictly positive on $(0,\pi/2]$.
\end{proposition}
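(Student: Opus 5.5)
The plan mirrors the small-parameter case, now with threshold $N=8$. Split into large degrees $n\geq8$, handled by the uniform envelopes $G_L$ and $G_R$ of Section~4, and the finitely many degrees $1\leq n\leq7$, handled by the prefix enclosure \eqref{eq:Taylorbox}. The degree $n=0$ is elementary: $\sin(\rho\phi)>0$ and $\sin(\rho\pi-\rho\phi)>0$ for $0<\phi\leq\pi/2$.

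\emph{Large degrees.} Cover $[2/5,999/1000]$ by finitely many rational intervals $[l,h]$. At each endpoint, certify a root bracket $\nu_-<\nu<\nu_+$ by strict opposite signs of $I$, evaluated with the Taylor-remainder formulas (e.g.\ \eqref{eq:Fexp} and the appendix bounds). By the monotonicity in Lemma~\ref{lem:root}, $\nu\in[\nu_-(l),\nu_+(h)]$ throughout the interval. Treat $\rho$ and $\nu$ as independent interval variables, which is a valid over-enclosure since every formula in Section~4 holds for the true $\nu$.

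For the left sum, \eqref{eq:geomregions} gives positivity for $\phi\leq\pi/(n+1)$. Beyond that, \eqref{eq:Lbound} applies with $X_L,Y_L$ from \eqref{eq:XYL}, and it suffices to show $G_L>0$ on angle cells covering $[\pi/9,\pi/2]$; the max in \eqref{eq:XYL} already accounts for smaller $\phi$. For the right sum, use \eqref{eq:Rbound} with \eqref{eq:XYRmid}. Note that the part $\phi\leq\rho\pi/(n+\rho)$ is covered geometrically, and that $(2n+1)\phi$ then lies past the first maximum $\rho\pi-\rho\phi$ of $J$, so \eqref{eq:Jright} is legitimate. Then check $G_R>0$ on cells of $[0,\pi/2]$, using the rewritten first term \eqref{eq:GRupperfirst} and the $\exp$-minus-one evaluation of $M_R$. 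This keeps the subtractive cancellations stable as $\rho\to999/1000$, where $p$, $\sigma$, $\eps$ and $\lambda$ become small.

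\emph{Finite degrees.} For $1\leq j\leq7$, enclose $L_j$ and $R_j$ on angle cells by \eqref{eq:Taylorbox} with $M_j^{(2)}$ from \eqref{eq:second}. The coefficients $a_k(\nu)$ are evaluated on the $\nu$-interval, which is unproblematic here since $\rho$ is bounded away from $0$. Subdivide adaptively until every lower endpoint is positive, again excluding the cells $\phi\leq\pi/(j+1)$ on the left and $\phi\leq\rho\pi/(j+\rho)$ on the right via \eqref{eq:geomregions}. Finally, check the coverage of all parameter and angle cells in exact rational arithmetic, and recheck everything with an independent verifier.

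\emph{Main obstacle.} I expect the hardest part to be the right envelope at the ends of the range. Near $\rho=999/1000$, $G_R$ involves the ratio $\sigma/p$ and $M_R$ with $p\to0$, and there is only a small positive margin at $N=8$. Near small $\phi$, the term $a^{-p-1}\rho/u$ competes with $M_R\rho^2\phi/(2u)$. Where positivity fails, I would first refine the parameter cells to tighten the $\nu$ enclosure, since the envelope is very sensitive to it; only if that is insufficient would I raise $N$ locally and add the extra degrees to the finite check.
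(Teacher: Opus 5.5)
Your plan follows the paper's certificate almost step for step. It uses $N=8$, the geometric regions \eqref{eq:geomregions} together with $G_L>0$, $G_R>0$ for $n\geq8$ (with \eqref{eq:GRupperfirst} and the $e^x-1$ form of $M_R$), endpoint root brackets propagated by monotonicity of $\nu$, and direct interval enclosures of the degrees $1,\dots,7$ with the geometric regions excised. Two points need attention.

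First, restricting the left envelope check to $[\pi/9,\pi/2]$ is not justified by the max in \eqref{eq:XYL}. For $n>8$ the angles $\pi/(n+1)<\phi<\pi/9$ still require $G_L(\phi)>0$. There the max only freezes $X_L=16\pi/9$ and $Y_L=2\pi$, while $M_L$ and the factor $1/(4u)+1/(3u^2)$ in \eqref{eq:EN} still depend on $\phi$. The reduction is nevertheless true. Since $\sigma>0$, one has $\nu\pi/2-\sigma\phi\in[\rho\pi/2,\nu\pi/2]\subset(0,\pi/2)$, and both $u=\sinc\phi$ and $b_L=2\pi-\rho\phi$ decrease in $\phi$. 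Hence $M_L$ in \eqref{eq:ML} is decreasing in $\phi$ on $[0,\pi/2]$. On $[0,\pi/9]$, $E_N(X_L,Y_L)$ is nondecreasing, so $G_L(\phi)\geq G_L(\pi/9)$ there. Add this argument, or simply certify $G_L$ on all of $[0,\pi/2]$; that is how the envelope certificates are formulated in Proposition~\ref{prop:certsmall}.

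Second, near $\rho=999/1000$ the delicate part is not only the right envelope but also the prefixes. At $\rho=\nu=1$ one has $T_j=\sin^2((j+1)\theta)/\sin\theta$. So for small $\eps=1-\rho$, the critical $T_j$ has minima of order $\eps^2$ near $\theta=m\pi/(j+1)$, $1\leq m\leq j$. A termwise enclosure in \eqref{eq:Taylorbox} on a parameter box loses an amount proportional to the box width. Your scheme therefore remains valid, but it forces parameter cells of width $O(\eps^2)$ there, with correspondingly tight root brackets. Your fallback of raising $N$ locally would only add more such near-zeros. The paper instead switches to the difference identity \eqref{eq:prefixid} in that regime, since it isolates the $O(\eps^2)$ balance between the dip of the main term and the $p$-weighted sum.
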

\begin{proof}[Certificate verification]
For $n\geq8$, the geometric regions in
\eqref{eq:geomregions} and the inequalities
$G_L>0$, $G_R>0$ suffice.
The file \file{strong_band_certificate.json} provides $262$
parameter intervals, with $263$ verified endpoint root
brackets. On these intervals it checks $8384$ left envelope
cells and $8542$ right envelope cells.

The remaining degrees $1,\ldots,7$ are certified on full
angle covers $\theta/\pi\in[0,1]$, with $28613$ cells.
The two geometric regions are used wherever they apply.
On all other cells, the actual $T_j$ is enclosed directly;
near $\rho=1$ the difference identity \eqref{eq:prefixid}
below is used instead, to reduce cancellation.

The program \file{check_strong_band.py}, which does not
import the generator, checks root signs with the integrated
exponential Taylor formula. It evaluates
\eqref{eq:GRdirect} and \eqref{eq:ML}, and uses alternate
prefix formulas. After additional interval subdivisions,
$274$ parameter cells and $81337$ angle cells pass
($8768$ left, $9032$ right, and $63537$ prefix cells).
There are $275$ verified root brackets including the
new subdivision endpoints. All covers are exact rational
covers.
\end{proof}

\subsection{A quadratic root bound near \texorpdfstring{$\rho=1$}{rho = 1}}

Let
\begin{equation}\label{eq:upperconstants}
 d=10^{-3},\qquad c_-=\frac{199}{100},\qquad
 c_+=\frac{205}{100},\qquad P=c_+d^2 .
\end{equation}

\begin{lemma}\label{lem:upperroot}
For $0<\eps=1-\rho\leq d$,
\begin{equation}\label{eq:quadratic}
 c_-\eps^2<p=1-\nu(\rho)<c_+\eps^2 .
\end{equation}
\end{lemma}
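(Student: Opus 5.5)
The plan is to obtain \eqref{eq:quadratic} from the sign characterization of the root in Lemma~\ref{lem:root}. There, every zero of $I(\cdot,\rho)$ is an upward crossing, so $I(\mu,\rho)<0$ for $\mu<\nu(\rho)$ and $I(\mu,\rho)>0$ for $\nu(\rho)<\mu\leq1$. It therefore suffices to prove, for every $0<\eps\leq d$,
\[
 I(1-c_-\eps^2,1-\eps)>0,\qquad I(1-c_+\eps^2,1-\eps)<0 ,
\]
which gives $c_-\eps^2<p<c_+\eps^2$. Both test points satisfy $p\leq P=c_+d^2$, so everything happens in the small box $0<\eps\leq d$, $0\leq p\leq P$.

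In that box I would write $I(1-p,1-\eps)$ through a two-variable Taylor expansion about $(p,\eps)=(0,0)$, keeping the exact $\eps$-dependence at $p=0$. First, $I(1,\rho)=1+\cos(\rho\pi)=1-\cos(\eps\pi)$, which lies between $\pi^2\eps^2/2-\pi^4\eps^4/24$ and $\pi^2\eps^2/2$. Second, the $p$-derivative at $(0,0)$ is $-I_\mu(1,1)$. Integrating by parts with $(1-\cos t)\log t$ gives
\[
 I_\mu(1,1)=-\int_0^{2\pi}\log t\,\sin t\dd t
 =\int_0^{2\pi}\frac{1-\cos t}{t}\dd t
 =\gamma+\log(2\pi)-\Ci(2\pi)\approx2.4378 .
\]
Hence
\[
 I(1-p,1-\eps)=\bigl(1-\cos\eps\pi\bigr)-p\,I_\mu(1,1)+\mathcal E(p,\eps).
\]
Here $\mathcal E$ collects a mixed term of size $O(p\eps)$ and a term of size $O(p^2)$. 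The mixed term comes from the $\rho$-dependence of $I_\mu$: the moving endpoint $a=(2-\eps)\pi$ and the phase shift $\eps\pi$. The quadratic term comes from $\partial_\mu^2I=\int_0^at^{\mu-1}\log^2(t)\sin(t-\rho\pi)\dd t$. I will bound both by explicit uniform constants on the box, for example $|\partial_\mu^2 I|\leq\int_0^{2\pi}t^{-P}\log^2 t\dd t$ and a similar bound with one logarithm for the mixed derivative.

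Substituting $p=c\eps^2$, the main part is $\eps^2\bigl(\pi^2/2-c\,I_\mu(1,1)\bigr)$, and the root of this coefficient is $c_0=\pi^2/(2I_\mu(1,1))\approx2.024$. At $c=c_-$ the coefficient is about $+0.084$, and at $c=c_+$ it is about $-0.063$. All corrections are $O(\eps^3)$ or smaller, namely $\eps^4$, $p\eps=c\eps^3$ and $p^2=c^2\eps^4$, so after dividing by $\eps^2$ they are at most a modest constant times $d=10^{-3}$. This is comfortably below the margins $0.084$ and $0.063$, which gives both signs for all $\eps\in(0,d]$.

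The main obstacle is making the remainder constants rigorous and clean. The logarithmic singularities at $t=0$ must be handled, which is harmless since the exponent $-p\geq-P$ is close to $0$. One must also account for the dependence of the upper limit $a$ on $\eps$. If a hand bound on the mixed derivative proves messy, I would instead certify the sign of $I(1-c\eps^2,1-\eps)/\eps^2$ on the full box with the ball-arithmetic Taylor enclosures of Appendix~\ref{app:roots}, applied in the normalized variables $(\eps,c)$. The tiny value of $d$ makes this a low-cost check with large relative margins. Finally, the value $I_\mu(1,1)$ is enclosed by evaluating $\Ci(2\pi)$ in ball arithmetic.
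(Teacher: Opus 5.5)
Your proposal is correct and is essentially the paper's argument: the exact value $I(1,\rho)=1-\cos(\pi\eps)$, the mean value theorem in $\mu$, and a uniform enclosure of $I_\mu$ around $J_0=I_\mu(1,1)=\gamma+\log(2\pi)-\Ci(2\pi)$. The paper obtains that enclosure from crude bounds on $I_{\mu\rho}$ and $I_{\mu\mu}$; your one- and two-logarithm integrals are essentially its $\pi M_1$ and $M_2$. Then come the sign tests at $\mu=1-c_\pm\eps^2$ and uniqueness of the root.

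Two small remarks. The moving-endpoint term in $I_{\mu\rho}$ vanishes because $\sin(a-\rho\pi)=0$. With these crude constants the mixed correction is about $c_+\pi M_1d\approx0.047$ against a margin of about $0.062$ at $c_+$, which is adequate but not really ``comfortable''.
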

\begin{proof}
Put $m_0=1-3d^2$ and $a_0=2\pi$. Define
\begin{align*}
 M_1&=m_0^{-2}+a_0\log a_0-a_0+1,\\
 M_2&=2m_0^{-3}
       +a_0\{(\log a_0)^2-2\log a_0+2\}-2,\\
 J_0&=\gamma+\log a_0-\Ci(a_0)
        =I_\mu(1,1).
\end{align*}
Here
$\Ci(x)=\gamma+\log x+\int_0^x(\cos t-1)\dd t/t$.
Integrating the absolute values of $\log t$ and $(\log t)^2$
on $(0,1)$ with weight $t^{m_0-1}$, and on $(1,a_0)$
with weight at most $1$, gives
\[
 |I_{\mu\rho}|\leq\pi M_1,\qquad
 |I_{\mu\mu}|\leq M_2
\]
on $m_0\leq\mu\leq1$, $1-d\leq\rho\leq1$.
The moving endpoint in $I_{\mu\rho}$ is again zero.
Consequently
\[
 |I_\mu(\mu,\rho)-J_0|
           \leq\pi M_1d+3M_2d^2 .
\]
Ball arithmetic verifies that this entire derivative
interval lies in $(241/100,247/100)$.
Also
\[
 \frac{I(1,\rho)}{\eps^2}
 =\frac{1-\cos(\pi\eps)}{\eps^2}
 \in\left[\frac{\pi^2}{2}-\frac{\pi^4d^2}{24},
                         \frac{\pi^2}{2}\right].
\]
The strict inequalities
\[
 \frac{\pi^2}{2}-\frac{\pi^4d^2}{24}
             -c_-\frac{247}{100}>0,\qquad
 \frac{\pi^2}{2}-c_+\frac{241}{100}<0
\]
are verified directly. The mean value theorem now gives
opposite signs at $\mu=1-c_-\eps^2$ and
$\mu=1-c_+\eps^2$. Uniqueness of the root proves
\eqref{eq:quadratic}. These constant checks are included in
\file{certify_endpoints.py}.
\end{proof}

\subsection{Removing the singular quotients in the envelopes}

For $0<\eps\leq d$ set
\[
 \eta_1=\frac{\sigma\eps}{p},\qquad
 \eta_2=\frac{\sigma^2}{p},\qquad
 \eta_w=\frac{\lambda^2}{p}.
\]
As $p=c\eps^2$ with $c_-<c<c_+$, the following nonsingular
intervals enclose them:
\begin{equation}\label{eq:etas}
\begin{aligned}
 \eta_1&\in[\,1/c_+-d,\ 1/c_-\,],\\
 \eta_2&\in[\,(1-c_+d)^2/c_+,\ 1/c_-\,],\\
 \eta_w&\in[\,(2-c_+d)^2/c_+,\ 4/c_-\,].
\end{aligned}
\end{equation}
The continuous gamma quotient has the enclosure
\begin{equation}\label{eq:gquot}
 g(p):=\frac{\log\ga(1-p)}p
       \in\left[\gamma,\ \gamma+\frac{\pi^2P}{12(1-P)}\right].
\end{equation}
This follows from
$\log\ga(1-p)=\gamma p+\sum_{k\geq2}\zeta(k)p^k/k$,
with $\zeta(k)\leq\zeta(2)$ and $1/k\leq1/2$.

For $0\leq x\leq x_0<\pi/2$,
$x\leq\tan x\leq x\sec^2x_0$, and hence
$-x^2\sec^2x_0/2\leq\log\cos x\leq-x^2/2$.
Define the exact logarithmic quotient by
\[
 \ell_R=g(p)+\log b_R+\frac{\log\cos(\lambda\pi/2)}p.
\]
It is enclosed by
$g(p)+\log b_R-\eta_w\pi^2[1,\sec^2(\pi d)]/8$.
The difference quotient in \eqref{eq:MR} is enclosed through
the exact identity
\begin{equation}\label{eq:exprelR}
 M_R=b_R^{-p}\ell_R\exprel(p\ell_R),\qquad
 \exprel(x)=\int_0^1e^{tx}\dd t.
\end{equation}
For interval $x$, $\exprel(x)$ is enclosed by
$\exp(\operatorname{hull}(0,x))$.

For the left quotient put
$\alpha=p\pi/2+\sigma\phi$. Then
\[
 \frac{\alpha^2}{p}
       =\frac{p\pi^2}{4}+\sigma\pi\phi+\eta_2\phi^2 .
\]
The exact quotient
$\ell_L=g(p)+\log b_L+\log u+\log\cos\alpha/p$
can be written as
\[
 \ell_L=g(p)+\log b_L+\log u
       -\frac{\xi_L}{2}
          \left(\frac{p\pi^2}{4}+\sigma\pi\phi+\eta_2\phi^2\right),
 \qquad
 \xi_L\in[1,\sec^2(P\pi/2+d\pi/2)] .
\]
As in \eqref{eq:exprelR},
\begin{equation}\label{eq:exprelL}
 M_L=b_L^{-p}\ell_L\exprel(p\ell_L).
\end{equation}
Together with \eqref{eq:GRupperfirst}, these formulas contain
no division by a parameter interval that includes zero.

\begin{proposition}[Upper-endpoint large-degree certificate]\label{prop:upperlarge}
For $999/1000\leq\rho<1$, $n\geq8$, and
$0<\phi\leq\pi/2$, both critical boundary sums are strictly
positive.
\end{proposition}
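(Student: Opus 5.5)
The plan mirrors the middle-range argument of Proposition~\ref{prop:middle} for $n\geq8$, now made nonsingular in $\eps=1-\rho$. By \eqref{eq:geomregions}, $L_n>0$ for $\phi\leq\pi/(n+1)$ and $R_n>0$ for $\phi\leq\rho\pi/(n+\rho)$. On the complementary angles, \eqref{eq:Lbound} and \eqref{eq:Rbound}, with $N=8$ and the scale bounds \eqref{eq:XYL}, \eqref{eq:XYRmid}, reduce the claim to $G_L>0$ and $G_R>0$ for all $\rho\in[999/1000,1)$ and $\phi\in(0,\pi/2]$ at $\nu=\nu(\rho)$. These envelopes carry an overall factor $p$ on the left of \eqref{eq:Rbound}, and $p\to0$ as $\rho\to1$. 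So the task is to show that $G_L,G_R$ themselves stay bounded below by positive constants uniformly as $\eps\downarrow0$. I would do this by interval evaluation in the variables $\eps\in[0,d]$ and $\phi$, with every occurrence of $\nu,p,\sigma,\lambda$ replaced by enclosures derived from Lemma~\ref{lem:upperroot}.

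Concretely, I would write $p=c\eps^2$ with $c\in(c_-,c_+)$, so that $\nu=1-c\eps^2$, $\sigma=\nu-\rho=\eps-c\eps^2$, and $\lambda=2\eps-p$. I would then rewrite each envelope term so that no quotient by $p$ remains.

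\textbf{Right envelope.} The first term of $G_R$ is taken in the form \eqref{eq:GRupperfirst}, whose coefficients are $\eta_1,\eta_2$ from \eqref{eq:etas}. The second term contains $M_R$, which is rewritten by \eqref{eq:exprelR} with $\ell_R$ enclosed through $g(p)$ from \eqref{eq:gquot} and $\eta_w$. The remaining terms of \eqref{eq:GRdirect} are regular at $\eps=0$: the $a^{-p-1}$ term, the $\sin(\delta+\sigma\phi)$ term, and $E_N$. Note that $E_N$ contains $X^{-p}$ and $Y^{\nu-2}$, both continuous at $p=0$.

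\textbf{Left envelope.} $G_L=M_L-E_N$, with $M_L$ given by \eqref{eq:exprelL}.

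Thus both envelopes become continuous interval functions of $(\eps,\phi,c,\xi)$ on a compact box containing $\eps=0$. The limiting values at $\eps=0$ are a useful sanity check. There $\eta_1=\eta_2=1/c$ and $\ell_R\to\gamma+\log(2\pi)-\pi^2\eta_w/8$. Since $\eta_w=4/c$ and $c\approx2$, this gives $\ell_R\approx\gamma+\log 2\pi-\pi^2/4$, which is slightly negative. So $M_R$ is a modest negative quantity multiplied by $\rho^2\phi/2$. Positivity of $G_R$ must then come from the first term $\approx\pi/(2cu)$, and is not obvious. This is exactly why a certified cover is needed rather than a hand estimate.

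The verification would be carried out as a certificate in the style of the earlier propositions. I would cover $\eps\in[0,d]$ by a few parameter intervals, and $\phi\in[0,\pi/2]$ by angle cells on which $X_R,Y_R,X_L,Y_L$ are evaluated as maxima of their interval arguments. On each cell, ball arithmetic must return a strictly positive lower endpoint for $G_R$ and for $G_L$; cells that fail are subdivided. The near-$\phi=0$ cells need care, since $1/\sin\phi$ has been dropped and the factors $u,\sinc$ tend to $1$. There the envelopes are simply continuous, so no special treatment is required beyond ensuring $\phi=0$ is included in a cell. Exact rational coverage of both variables would then be checked. Since every enclosure is valid for all $c\in(c_-,c_+)$, the actual root curve is covered without needing individual root brackets.

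I expect the main obstacle to be the right envelope near $\phi\approx\pi/2$ and $\eps\approx0$. There the gain in the first term is only of order one, while the negative contributions from $M_R$ and the $\sin(\delta+\sigma\phi)\phi/(6u^{\nu+1})$ term are comparable. In addition, the enclosures of $\eta_w$, $\xi_L$ and $\exprel$ introduce overestimation. If the crude hulls in \eqref{eq:etas} are too wide, I would first tighten them by tracking $c$ as an explicit interval variable rather than using the separate bounds. If that still fails, I would split $\eps$ into finer subintervals, narrowing the constants $c_\pm$ on each via the mean-value argument of Lemma~\ref{lem:upperroot}.
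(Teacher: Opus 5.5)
Your proposal is essentially the paper's proof. It uses the geometric regions \eqref{eq:geomregions} and the $N=8$ envelopes, made nonsingular through \eqref{eq:GRupperfirst}, \eqref{eq:exprelR}, \eqref{eq:exprelL} and the quotient enclosures \eqref{eq:etas}, \eqref{eq:gquot} from Lemma~\ref{lem:upperroot}, so no root brackets are needed, and certifies them by ball arithmetic; the paper just uses the single parameter box $\eps\in[0,d]$ in each of $2048$ uniform angle cells, obtaining $G_R>1/4$ and $G_L>3/4$ without subdivision. Your heuristic limits are slightly off (at $\eps=0$ one has $b_R=2\pi-\phi$, so $\ell_R\to\gamma+\log(2\pi-\phi)-\pi^2/(2c)$, and the first term of $G_R$ tends to $(2\pi-\phi)/(2cu)$ rather than $\pi/(2cu)$), but they play no role in the argument.
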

\begin{proof}[Certificate verification]
Use $N=8$ and \eqref{eq:XYL}, \eqref{eq:XYRmid}.
Enclose $\eps\in[0,d]$, $p\in[0,P]$,
$\sigma\in[0,d]$ and $\lambda\in[0,2d]$, with the quotient
intervals \eqref{eq:etas}. Evaluate the right envelope with
\eqref{eq:GRupperfirst}, \eqref{eq:exprelR}, and the left
envelope with \eqref{eq:exprelL}.

The program \file{certify_strong_endpoint.py} checks the
$2048$ angle cells
$[j\pi/4096,(j+1)\pi/4096]$, $0\leq j<2048$.
The complete parameter interval is used in every cell.
It certifies $G_R>1/4$ and $G_L>3/4$, with smallest saved
lower endpoints greater than $0.3764$ and $0.7892$.
The geometric regions and
\eqref{eq:Rbound}, \eqref{eq:Lbound} complete the argument.
\end{proof}

\subsection{The seven endpoint prefixes}

For $1\leq j\leq7$ and $0<\theta<\pi$, let
$V_j(\theta)=2\sin\theta\,T_j(\rho,\nu,\theta)$ and
$h=\eps\theta$.
The recurrence $a_k-a_{k-1}=-p\,a_{k-1}/k$ gives the exact
difference identity
\begin{equation}\label{eq:prefixid}
\begin{split}
 V_j(\theta)={}&a_j\{\cos h-\cos(2(j+1)\theta-h)\}\\
 &+p\sum_{k=1}^j\frac{a_{k-1}}k
                      \{\cos h-\cos(2k\theta-h)\}.
\end{split}
\end{equation}
Write $\mathfrak h_j=\sum_{k=1}^j1/k$ and
$d_j=\pi/[100(j+1)]$. Since
$a_j=\prod_{k=1}^j(1-p/k)$, one has
$1-P\mathfrak h_j\leq a_k\leq1$ for $k\leq j$.

Away from all neighborhoods
$|\theta-m\pi/(j+1)|\leq d_j$, $0\leq m\leq j+1$, the main
cosine difference in \eqref{eq:prefixid} is at least
\[
 \cos(\pi d)-\cos\bigl(\pi(2/100-d)\bigr).
\]
Thus
\begin{equation}\label{eq:prefixaway}
\begin{split}
 V_j(\theta)\geq{}&
 (1-P\mathfrak h_j)
       \{\cos(\pi d)-\cos(\pi(2/100-d))\}
             -2P\mathfrak h_j>\frac1{1000}.
\end{split}
\end{equation}
The last seven constant inequalities are verified by ball
arithmetic.

On an internal neighborhood put
\[
 H_j(\theta)=\sum_{k=1}^j\frac{1-\cos(2k\theta)}k,\qquad
 B_j(\theta)=(1-P\mathfrak h_j)\cos(\pi d)H_j(\theta)
                              -\pi d\,\mathfrak h_j .
\]
The main term in \eqref{eq:prefixid} is at least
$-\eps^2\theta^2/2$.
Expanding each smaller difference into its sine and cosine
parts gives a lower bound $pB_j(\theta)$ for the sum of
the remaining terms. For every one of these $28$ internal
neighborhoods, direct interval evaluation verifies
$B_j>0$ and
\begin{equation}\label{eq:prefixnear}
 \frac{V_j(\theta)}{\eps^2}
           \geq-\frac{\theta^2}{2}+c_- B_j(\theta)>\frac1{10}.
\end{equation}
Indeed $p/\eps^2>c_-$ by \eqref{eq:quadratic}.
The neighborhood of $0$ is contained in the left region of
\eqref{eq:geomregions}. The neighborhood of $\pi$ is contained
in the right region, because
$(j+\rho)d_j\leq(j+1)d_j=\pi/100<\rho\pi$.
All these checks are contained in
\file{certify_strong_endpoint.py}. We have proved:

\begin{proposition}\label{prop:upperprefix}
For $999/1000\leq\rho<1$ and $1\leq j\leq7$,
$T_j(\rho,\nu,\theta)>0$ for $0<\theta<\pi$.
\end{proposition}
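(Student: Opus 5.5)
The statement to prove is Proposition~\ref{prop:upperprefix}. It collects the three estimates just described, so the plan is to assemble them. Fix $1\leq j\leq7$ and $999/1000\leq\rho<1$, and set $\eps=1-\rho\in(0,d]$. Since $\sin\theta>0$ on $(0,\pi)$, positivity of $T_j(\rho,\nu,\theta)$ is equivalent to $V_j(\theta)>0$. I will work throughout with the exact identity \eqref{eq:prefixid}. It follows from Abel summation of $\sum a_k\sin((2k+\rho)\theta)$ against the geometric prefixes, using $a_k-a_{k-1}=-pa_{k-1}/k$ and the product-to-sum formula $2\sin\theta\sin((2k+\rho)\theta)=\cos((2k-1+\rho)\theta)-\cos((2k+1+\rho)\theta)$, rewritten with $\rho=1-\eps$. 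I would first recheck this identity term by term.

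I split $(0,\pi)$ into three parts: the neighborhoods $|\theta-m\pi/(j+1)|\leq d_j$ for $m=0$ and $m=j+1$, the internal neighborhoods $1\leq m\leq j$, and the complement.

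\emph{The two end neighborhoods.} Near $0$ we have $\theta\leq d_j<\pi/(j+1)$, so the left region in \eqref{eq:geomregions} gives $L_j>0$, hence $T_j>0$. Near $\pi$ we write $\theta=\pi-\phi$ with $\phi\leq d_j$. Then $(j+\rho)\phi\leq\pi/100<\rho\pi$, so the right region gives $R_j>0$.

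\emph{The complement of all neighborhoods.} Here $|\cos h-\cos(2(j+1)\theta-h)|$ is bounded below, because $h=\eps\theta\leq\pi d$ and $2(j+1)\theta$ stays at distance at least $2\pi/100$ from multiples of $2\pi$. The coefficients satisfy $1-P\mathfrak h_j\leq a_k\leq1$, using Lemma~\ref{lem:upperroot} for $p<P$. The $p$-sum has absolute value at most $2p\mathfrak h_j\leq2P\mathfrak h_j$. This yields \eqref{eq:prefixaway}, and the seven numerical constants are certified by ball arithmetic.

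\emph{The internal neighborhoods.} This is the main obstacle. Both contributions in \eqref{eq:prefixid} are of size $\eps^2$, so cancellation must be controlled through Lemma~\ref{lem:upperroot}.
\begin{itemize}
\item For the leading term, $\cos(2(j+1)\theta-h)\leq1$ gives $\cos h-\cos(2(j+1)\theta-h)\geq\cos h-1\geq-h^2/2=-\eps^2\theta^2/2$.
\item For the $p$-sum, expand $\cos(2k\theta-h)=\cos2k\theta\cos h+\sin2k\theta\sin h$. Then $\cos h-\cos(2k\theta-h)=\cos h(1-\cos2k\theta)-\sin h\sin2k\theta$.
\item Using $a_{k-1}\geq1-P\mathfrak h_j$, $1-\cos2k\theta\geq0$, $\cos h\geq\cos(\pi d)$ and $|\sin h|\leq\pi d$, the sum is at least $pB_j(\theta)$.
\end{itemize}
Once $B_j>0$ is checked on each cell, $p>c_-\eps^2$ gives $V_j/\eps^2\geq-\theta^2/2+c_-B_j(\theta)$.

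I would verify \eqref{eq:prefixnear} by interval evaluation on each of the $28$ neighborhoods (subdividing if needed). This works because $H_j$ is bounded away from $0$ near interior points $m\pi/(j+1)$: for instance $1-\cos(2\theta)$ alone is at least about $\sin^2(\pi/8)$. That margin has to dominate $\theta^2/2\leq\pi^2/2$ after multiplication by $c_-\approx2$. This quantitative margin is the delicate point, and if it fails on some cell I would refine the cover or keep the exact $a_{k-1}$ bounds. Combining the three regions gives $V_j>0$ on $(0,\pi)$, which proves Proposition~\ref{prop:upperprefix}.
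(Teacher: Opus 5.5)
Your proposal is correct and follows the paper's argument step for step. It uses the same identity \eqref{eq:prefixid} with $1-P\mathfrak h_j\leq a_k\leq1$, the geometric regions on the two end neighborhoods, \eqref{eq:prefixaway} on the complement, and the bound $V_j\geq-\eps^2\theta^2/2+pB_j$ with $p>c_-\eps^2$, checked by interval evaluation on the $28$ internal neighborhoods.

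One caution on your heuristic for why that last check succeeds: neither the single term $1-\cos2\theta$ nor the crude bound $\theta^2/2\leq\pi^2/2$ would suffice. Already $c_-H_7(7\pi/8)\approx4.80<\pi^2/2$. The margin of about $1$ at the worst point $\theta\approx j\pi/(j+1)$ comes from the full sum $H_j\approx2.4$, which is stationary there, compared with the actual value $\theta^2/2\approx3.79$. The cellwise evaluation captures exactly this.
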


\section{Completion, sharpness and the weak conjecture}

\begin{proof}[Proof of Theorem~\ref{thm:main}]
For $0<\rho\leq2/5$, Corollary~\ref{cor:smalllarge} covers
every $n\geq64$, and Proposition~\ref{prop:prefix} covers
the remaining positive degrees.
For $2/5\leq\rho\leq999/1000$, use
Proposition~\ref{prop:middle}.
For $999/1000\leq\rho<1$, use
Propositions~\ref{prop:upperlarge} and \ref{prop:upperprefix}.
The critical sum of degree zero is
$T_0(\rho,\nu,\theta)=\sin(\rho\theta)>0$ on $(0,\pi)$.
The two half-circle sums together cover the full circle.
At $\theta=0,\pi$ they are nonnegative by continuity.
Lemma~\ref{lem:boundary} proves the result for all
$0<\mu\leq\nu$.

When $\rho=1$, Lemma~\ref{lem:root} gives $\nu=1$, and
$(1-z)s_n^1(z)=1-z^{n+1}$ has strictly positive real part
in $\D$. For $0<\mu\leq1$, the Abel identity
\eqref{eq:abelmu}, or the corresponding identity for
$s_n^\mu$, gives the same conclusion. This is
\eqref{eq:strong} with $\rho=1$.

It remains to prove optimality. Fix $\rho<1$ and
$\nu<\mu\leq1$. Uniqueness of the upward-crossing root gives
$I(\mu,\rho)>0$. Apply the exact beta representation with
parameter $\mu$ at $\phi=a/(2n)$. The first variance bound
in \eqref{eq:variance}, with $\mu$ replacing $\nu$, gives
an error at most
$\mu a^2/[2n(\mu+1)(\mu+2)]+\rho a/(2n)$. Consequently
\begin{equation}\label{eq:sharpstrong}
 \frac{T_n(\rho,\mu,\pi-a/(2n))}{c_n(\mu)}
        \longrightarrow
           -\mu a^{-\mu}I(\mu,\rho)<0 .
\end{equation}
For all sufficiently large $n$, one of the boundary rotated
real parts in \eqref{eq:rot1}--\eqref{eq:rot2} is negative.
The boundary point is different from $1$, so continuity
gives a nearby point in $\D$ with the same violation.
For any $\mu>1$, $s_1^\mu(-1/\mu)=0$ and $-1/\mu\in\D$;
the sector in \eqref{eq:strong} excludes zero. This also
settles optimality when $\rho=1$.
\end{proof}

\begin{proof}[Proof of Corollary~\ref{cor:weak}]
Let $F(z)=(1-z)^\rho s_n^\mu(z)$. Theorem~\ref{thm:main}
gives $|\arg F(z)|<\rho\pi/2$.
Since $|\arg(1-z)|<\pi/2$ in $\D$,
\[
 \left|\arg\bigl((1-z)^{\rho-1}F(z)\bigr)\right|
       \leq|\arg F(z)|+(1-\rho)|\arg(1-z)|<\frac\pi2 .
\]
This proves \eqref{eq:weak}.

For optimality when $\rho<1$ and $\nu<\mu\leq1$, put
$\beta=2\rho-1$ and define
\[
 U_n(\phi)=\sum_{k=0}^n a_k(\mu)
                    \cos\bigl(2k\phi+\beta(\phi-\pi/2)\bigr).
\]
At $z=e^{2\ii\phi}$ the real part in \eqref{eq:weak} is
$(2\sin\phi)^\beta U_n(\phi)$.
At $\phi=a/(2n)$, the beta representation and the cosine
variance estimate give an error at most
$\mu a^2/[2n(\mu+1)(\mu+2)]+|\beta|a/(2n)$. Hence
\[
 \frac{U_n(a/(2n))}{c_n(\mu)}
 \longrightarrow
 \mu\int_0^1t^{\mu-1}\cos(at-\beta\pi/2)\dd t
       =-\mu a^{-\mu}I(\mu,\rho)<0 .
\]
Again this produces an interior violation for some degree.
For $\mu>1$, the zero of $s_1^\mu$ gives a real part equal
to zero at an interior point. The case $\rho=1$ follows
from the same zero argument.
\end{proof}

\section{Verification and supplementary files}\label{sec:repro}

All certified numerical inequalities use real or complex ball
arithmetic at $256$ bits, through Arb \cite{Arb} and
\file{python-flint}. Every input parameter endpoint and angle
endpoint is stored as a rational number. Transcendental
constants and functions are evaluated with enclosing balls.
A cell is accepted only when its whole output ball has a
strictly positive lower endpoint, or when one of the proved
geometric regions applies.

Root approximations used to propose rational brackets have
no role in their acceptance: strict opposite signs of the
integral enclosure are required.
Similarly, recorded decimal lower bounds are reports rather
than inputs to the verifiers. The verifiers recompute signs
and check rational adjacency and endpoints.
The main numerical tasks are recorded in
Table~\ref{tab:certs}.

\begin{table}[htbp]
\centering
\caption{Continuous certificates in the proof.
The derivative row additionally has one enclosing interval
at $\rho=0$.}\label{tab:certs}
\small
\begin{tabular}{@{}p{.29\textwidth}p{.21\textwidth}rr@{}}
\toprule
Task & Parameter range & Parameter cells & Angle cells\\
\midrule
Small-parameter envelopes &$[d_0,2/5]$&244&31232\\
Endpoint envelopes &$[0,d_0]$&1&128\\
Root-ratio derivative &$[d_0,2/5]$&1240&---\\
Degrees $1,\ldots,63$ &$[0,2/5]$&512&177946\\
Middle envelopes &$[2/5,999/1000]$&262&16926\\
Middle degrees $1,\ldots,7$ &$[2/5,999/1000]$&262&28613\\
Upper envelopes &$[999/1000,1]$&1&2048\\
\bottomrule
\end{tabular}
\end{table}

The alternative verifiers use the direct
gamma formulas, separate trigonometric integrals or integrated
exponential formulas, and complex phase products.
They are alternative implementations of the finite
inequalities; the analytic deductions are given in the
preceding sections.
Adaptive subdivision may change the number of accepted cells
without changing the verified parameter range.

The supplementary archive preserves the three source directories
\file{kr_strong_scale}, \file{kr_strong_assessment}, and
\file{kr_continuous_interval}, so their relative imports remain
unchanged. To verify the supplied certificates from the
archive root, run
\begin{verbatim}
python kr_strong_scale/verify_full_strong.py
\end{verbatim}
This command rechecks the small range, the middle range and
the regularized upper endpoint. It requires only
\file{python-flint}; the NumPy and SciPy packages are used
only by the generators that propose new root brackets.
The tested versions are Python $3.12.14$,
\file{python-flint} $0.9.0$, NumPy $2.3.5$ and SciPy $1.17.0$.

The individual generators can also be rerun:
\begin{verbatim}
python kr_strong_scale/certify_scale.py --N 64
python kr_strong_scale/certify_ratio_monotone.py
python kr_strong_scale/certify_prefixes.py --method monotone
python kr_strong_assessment/certify_strong_band.py
python kr_strong_assessment/certify_strong_endpoint.py
\end{verbatim}
The supplied \file{manifest.json} records SHA-256 digests of
the manuscript, source code, rational covers and run reports.
Verification rewrites reports containing elapsed times, so
their post-verification digests can differ from the original
archive digests.

\section{A sharp convolution consequence for starlike functions}
\label{sec:starlike-convolution}

\subsection{Normalization and convolution tools}
Let $\mathcal A$ be the class of analytic functions
$f(z)=z+\sum_{j\geq2}a_jz^j$ in $\D$, and write
\[
 S_{n+1}f(z)=z+\sum_{k=1}^n a_{k+1}z^{k+1},\qquad n\geq0.
\]
The Hadamard product is
$(f*g)(z)=z+\sum_{j\geq2}a_jb_jz^j$ when
$g(z)=z+\sum_{j\geq2}b_jz^j$.
For $0\leq\alpha<1$, denote by $\mathcal S^*(\alpha)$ the normalized
starlike functions of order $\alpha$, characterized by
$\Re(zf'(z)/f(z))>\alpha$. Put
\[
 K_\eta(z)=\frac{z}{(1-z)^\eta},\qquad
 \mathcal R_\alpha
 =\{A\in\mathcal A:A*K_{2-2\alpha}\in\mathcal S^*(\alpha)\}.
\]
These are the prestarlike classes. We shall distinguish the two sectors
\begin{equation}\label{eq:two-sectors}
 \Sigma_\rho=\{w\ne0:|\Arg w|<\rho\pi/2\},\qquad
 \Theta_\rho=\{w\ne0:|\Arg w|<\rho\pi\},
 \quad 0<\rho\leq1.
\end{equation}
Here $\Arg$ is the principal argument. The sector $\Sigma_\rho$ is
convex, whereas $\Theta_\rho$ is nonconvex when $\rho>1/2$.
In particular, $\Theta_1=\mathbb C\setminus(-\infty,0]$.
For $0<\tau\leq2$ let
\[
 h_\tau(z)=\left(\frac{1+z}{1-z}\right)^\tau,
\]
with the branch taking the value $1$ at $0$. It maps $\D$ conformally
onto the sector of half-angle $\tau\pi/2$. Thus
$h_\rho(\D)=\Sigma_\rho$ and $h_{2\rho}(\D)=\Theta_\rho$.

We use two classical facts about prestarlike functions:
\begin{equation}\label{eq:prestarlike-inclusion}
 \mathcal R_\alpha\subseteq\mathcal R_\beta
 \quad(0\leq\alpha<\beta<1),
\end{equation}
and, for $A\in\mathcal R_\alpha$, $G\in\mathcal S^*(\alpha)$ and
analytic $h$ in $\D$,
\begin{equation}\label{eq:convolution-hull}
 \frac{A*(Gh)}{A*G}(\D)
 \subset\overline{\operatorname{co}}\,h(\D).
\end{equation}
In \eqref{eq:convolution-hull}, $Gh$ is ordinary multiplication.
The inclusion \eqref{eq:prestarlike-inclusion} is recorded in
\cite[Lemma~1.1]{RSS}. The convolution theorem is classical
\cite{Ruscheweyh}; a directly accessible statement is
\cite[Theorem~1.1]{ABR}. We record the denominator and strictness
details needed here.

\begin{lemma}[Strict sector transfer]\label{lem:sector-transfer}
Suppose $0\leq\alpha<1$, $A\in\mathcal R_\alpha$ and
$G\in\mathcal S^*(\alpha)$. Then $(A*G)/z$ is analytic and zero-free
in $\D$. If $h$ is analytic, $h(0)=1$ and
$h(\D)\subset\Sigma_\tau$ for $0<\tau\leq1$, the quotient
\[
 Q=\frac{A*(Gh)}{A*G}
\]
extends analytically with $Q(0)=1$, and $Q\prec h_\tau$.
\end{lemma}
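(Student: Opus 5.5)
The proof has three parts. First, $(A*G)/z$ is analytic and zero-free. Second, $Q$ is analytic with $Q(0)=1$. Third, $Q(\D)$ lies in the open sector $\Sigma_\tau$, which together with $Q(0)=1$ and univalence of $h_\tau$ gives $Q\prec h_\tau$.

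\emph{Zero-free denominator.} I would use the classical fact from Ruscheweyh's theory that convolution with a prestarlike function preserves the starlike class: $A\in\mathcal R_\alpha$ and $G\in\mathcal S^*(\alpha)$ give $A*G\in\mathcal S^*(\alpha)$. This is the case $h(z)=zG'(z)/G(z)$ of \eqref{eq:convolution-hull}, since $A*(zG')=z(A*G)'$. The hull of $h(\D)$ lies in $\{\Re w\ge\alpha\}$, and the values on $\D$ cannot reach that boundary unless $h$ is constant.

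The difficulty is that \eqref{eq:convolution-hull} presupposes $A*G$ has no zeros in $\D\setminus\{0\}$. I would handle this by dilation. For $r<1$ put $G_r(z)=G(rz)/r\in\mathcal S^*(\alpha)$. Then $(A*G_r)(z)=(A*G)(rz)/r$. Since $(A*G)(z)/z$ takes the value $1$ at $0$, it is zero-free near $0$. Take the supremum $r_0$ of radii on which it is zero-free. On $|z|<r_0$, the starlikeness argument shows that $(A*G)(r_0z)/r_0$ is starlike of order $\alpha$ on $\D$. A starlike function vanishes only at $0$, and this excludes zeros on $|z|=r_0$; openness then forces $r_0=1$. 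Hurwitz's theorem applied to $r\uparrow1$ is an alternative. Once this holds, $(A*G)/z$ is analytic and zero-free.

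\emph{Analyticity of $Q$.} Because $h(0)=1$, the leading coefficient of $A*(Gh)$ is $1$. Hence $Q=\bigl[(A*(Gh))/z\bigr]/\bigl[(A*G)/z\bigr]$ is analytic with $Q(0)=1$.

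\emph{Sector inclusion.} Apply \eqref{eq:convolution-hull}: $Q(\D)\subset\overline{\operatorname{co}}\,h(\D)\subset\overline{\Sigma_\tau}$. The inclusion in $\overline{\Sigma_\tau}$ uses $\tau\le1$, so $\Sigma_\tau$ is convex. The closed sector is the intersection of the two closed half-planes
\[
\Re\bigl(e^{\pm\ii(1-\tau)\pi/2}w\bigr)\ge0 .
\]
Strictness then follows exactly as in Lemma~\ref{lem:boundary}. Each function $\Re\bigl(e^{\pm\ii(1-\tau)\pi/2}Q\bigr)$ is harmonic and nonnegative, and equals $\sin(\tau\pi/2)>0$ at $0$. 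By the strong minimum principle it is strictly positive. So $Q(\D)\subset\Sigma_\tau=h_\tau(\D)$. Since $h_\tau$ is univalent with $h_\tau(0)=1=Q(0)$, we get $Q\prec h_\tau$ via $\omega=h_\tau^{-1}\circ Q$.

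The main obstacle is the denominator zero-freeness. This is the step where the standard statement quietly assumes what is needed, and the dilation/continuity argument has to be set up so that it is not circular.
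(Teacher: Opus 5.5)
Your proof is correct. The normalization step and the strict sector inclusion (two closed half-planes, value $\sin(\tau\pi/2)>0$ at $0$, strong minimum principle) are the same as in the paper.

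The routes differ for the zero-freeness of $(A*G)/z$. The paper uses a one-step contradiction. If $B=A*G$ vanished at some $z_0\neq0$, it dilates $A$ and $G$ by $s$ with $|z_0|<s^2$ and applies \eqref{eq:convolution-hull} to the bounded test function $h_s=z/G_s$. Since $G_sh_s=z$ and $A_s*z=z$, the quotient $z/(B(s^2z)/s^2)$ would be bounded wherever it is defined, which contradicts its pole at $z_0/s^2$. That argument reads \eqref{eq:convolution-hull} pointwise on the set where the quotient is defined, including near a zero of the denominator.

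You instead take $h=zG'/G$ and run a maximal-radius argument. Then \eqref{eq:convolution-hull} is only ever applied to the pair $A,G_{r_0}$, whose convolution is already known to be zero-free on $\D\setminus\{0\}$. Your version therefore needs only the more conservative reading of the classical theorem, and it also yields the stronger fact $A*G\in\mathcal S^*(\alpha)$. The price is a continuity argument; the paper's proof is shorter.

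Two points need tightening.

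First, zeros on $|w|=r_0$ are not excluded by ``a starlike function vanishes only at $0$'': on $|w|<r_0$ you already knew that. The actual mechanism is radial monotonicity. Put $F(z)=B(r_0z)/r_0$, which is analytic on a neighborhood of $\overline{\D}$ because $r_0<1$. Then $\partial_t\log|F(te^{\ii\theta})|=\Re(zF'/F)/t\geq0$, so $|F(e^{\ii\theta})|\geq|F(e^{\ii\theta}/2)|>0$.

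Second, drop the Hurwitz aside. Hurwitz's theorem needs the approximants $B(rz)/(rz)$ to be zero-free on $\D$, which is exactly what is unknown for $r>r_0$.
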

\begin{proof}
For clarity, the convex-hull theorem itself implies the needed
zero-free assertion, even if initially interpreted wherever its quotient
is defined. Put $B=A*G$ and suppose $B(z_0)=0$ at a nonzero
$z_0\in\D$. Choose $s\in(0,1)$ with $|z_0|<s^2$, and set
$A_s(z)=A(sz)/s$, $G_s(z)=G(sz)/s$.
Both retain their respective class memberships. The function
$h_s(z)=z/G_s(z)$, with value $1$ at $0$, is analytic and bounded
on a neighborhood of $\overline\D$, since $G_s/z$ is zero-free there.
As $G_sh_s=z$, \eqref{eq:convolution-hull} would make
\[
 \frac{z}{A_s*G_s(z)}=\frac{z}{B(s^2z)/s^2}
\]
bounded wherever defined. Its pole at $z_0/s^2$ is a contradiction.
Also $B(z)=z+O(z^2)$, so the asserted normalization and removable
value of $Q$ follow from the first coefficients.

Let $\vartheta=\tau\pi/2$. The closed convex hull in
\eqref{eq:convolution-hull} lies in the intersection of the two
closed half-planes
\[
 \Re\bigl(e^{\pm\ii(\pi/2-\vartheta)}Q\bigr)\geq0.
\]
Both harmonic functions have value $\sin\vartheta>0$ at $0$.
The strong minimum principle makes both inequalities strict in
$\D$, so $Q(\D)\subset\Sigma_\tau$. Since $Q(0)=1$, conformal
inversion of $h_\tau$ gives the subordination.
\end{proof}

For $\mu>0$ define the inverse coefficient kernel and the
hypergeometric kernel by
\begin{equation}\label{eq:inverse-phi}
 C_\mu(z)=z\sum_{k\geq0}\frac{k!}{(\mu)_k}z^k,
 \qquad
 \Phi_{\rho,\mu}(z)
 =z\,{}_2F_1(\rho,1;\mu;z)
 =z\sum_{k\geq0}\frac{(\rho)_k}{(\mu)_k}z^k.
\end{equation}
For fixed positive parameters their coefficients have only polynomial
growth, by the gamma-ratio asymptotics. Thus these kernels are analytic
in $\D$, and the same is true of $A=C_\mu*f$ for $f\in\mathcal A$.
If $f\in\mathcal S^*(1-\mu/2)$ with $0<\mu\leq1$, then
$A\in\mathcal R_{1-\mu/2}$ and coefficient comparison gives
\begin{equation}\label{eq:three-hadamard-identities}
 A*K_\mu=f,\qquad A*K_\rho=\Phi_{\rho,\mu}*f,\qquad
 A*(zs_n^\mu)=S_{n+1}f.
\end{equation}

\subsection{The convolution theorem and its sharpness}
\begin{theorem}\label{thm:starlike-convolution}
Let $0<\rho\leq1$, $0<\mu\leq\nu(\rho)$ and
$f\in\mathcal S^*(1-\mu/2)$. The function
$(\Phi_{\rho,\mu}*f)/z$ is analytic and zero-free in $\D$, and
for every $n\geq0$,
\begin{equation}\label{eq:starlike-convolution}
 \frac{S_{n+1}f}{\Phi_{\rho,\mu}*f}\prec h_\rho.
\end{equation}
The quotient has removable value $1$ at $0$.
For fixed $\rho$, the parameter range is optimal among
$0<\mu\leq1$. For each fixed admissible $\mu$ and each fixed
$n\geq0$, the uniform half-angle $\rho\pi/2$ cannot be decreased.
\end{theorem}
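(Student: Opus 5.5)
The plan is to apply Lemma~\ref{lem:sector-transfer} with $\alpha=1-\mu/2$, $A=C_\mu*f$, $G=K_\rho$ and $h=(1-z)^\rho s_n^\mu(z)$, which has $h(0)=1$.

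We first need $G=K_\rho\in\mathcal S^*(1-\mu/2)$. Here $zK_\rho'/K_\rho=1+\rho z/(1-z)$ has real part greater than $1-\rho/2$. Since $\mu\leq\nu(\rho)<\min\{1,2\rho\}$ by \eqref{eq:rootbounds} for $\rho<1$, and $\mu\leq1=\rho$ when $\rho=1$, we have $\mu\leq2\rho$. Hence $1-\rho/2\geq1-\mu/2$, and $K_\rho$ is starlike of order $1-\mu/2$. Also $A\in\mathcal R_{1-\mu/2}$, as recorded before \eqref{eq:three-hadamard-identities}.

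Next we identify the two convolutions. We have $Gh=z(1-z)^{-\rho}(1-z)^\rho s_n^\mu=zs_n^\mu$. By \eqref{eq:three-hadamard-identities}, $A*(Gh)=S_{n+1}f$ and $A*G=\Phi_{\rho,\mu}*f$. Lemma~\ref{lem:sector-transfer} then gives that $(\Phi_{\rho,\mu}*f)/z$ is zero-free, and that the quotient has removable value $1$. Theorem~\ref{thm:main} supplies $h(\D)\subset\Sigma_\rho$, so the lemma yields $Q\prec h_\rho$, which is \eqref{eq:starlike-convolution}.

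For optimality in $\mu$, take $f=K_\mu$, which lies in $\mathcal S^*(1-\mu/2)$ because its order is exactly $1-\mu/2$. Then $A=C_\mu*K_\mu=z/(1-z)$, the identity for convolution. This gives $\Phi_{\rho,\mu}*K_\mu=A*K_\rho=K_\rho$ and $S_{n+1}K_\mu=zs_n^\mu$. The quotient is therefore exactly $(1-z)^\rho s_n^\mu(z)$, and for $\mu>\nu(\rho)$ Theorem~\ref{thm:main} gives a violation for some $n$.

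For sharpness of the half-angle at fixed $\mu$ and $n$, we need extremal functions whose quotient approaches both rays $\Arg w=\pm\rho\pi/2$. I would try $f(z)=z/(1-xz)^\mu$ with $|x|=1$, or $f=K_\mu$ itself. With $f=K_\mu$ the quotient is $(1-z)^\rho s_n^\mu(z)$. As $z\to1$ along the unit circle, $s_n^\mu(z)\to s_n^\mu(1)>0$ while $\arg(1-z)^\rho\to\mp\rho\pi/2$. Thus the argument of the quotient approaches $\pm\rho\pi/2$ from inside $\D$, and no smaller half-angle works. This is a simple boundary-limit computation; it holds even for $n=0$, where the quotient is $(1-z)^\rho$.

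The main obstacle is small. It is mainly checking that $K_\rho$ has the required starlike order, i.e.\ that $\mu\leq2\rho$; this comes from \eqref{eq:rootbounds}. For the optimality statement we only need $\mu$ up to $1$, and the range $\mu>\nu$ is handled through Theorem~\ref{thm:main}. The class membership $K_\rho\in\mathcal S^*(1-\mu/2)$ is not needed there, since the counterexample uses $f=K_\mu$ directly.
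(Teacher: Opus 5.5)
Your proof has a genuine gap: $K_\rho\in\mathcal S^*(1-\mu/2)$ is false on part of the admissible range. The infimum of $\Re\bigl(zK_\rho'/K_\rho\bigr)=1+\rho\Re\bigl(z/(1-z)\bigr)$ over $\D$ is exactly $1-\rho/2$ (let $z\to-1$). Hence $K_\rho\in\mathcal S^*(1-\mu/2)$ holds if and only if $1-\rho/2\geq1-\mu/2$, that is, $\mu\geq\rho$. The bound $\mu\leq2\rho$ from \eqref{eq:rootbounds} points the wrong way and does not give this.

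Since $\nu(\rho)>\rho$, every $\mu\in(0,\rho)$ is admissible, and there your single application of Lemma~\ref{lem:sector-transfer} with $G=K_\rho$ is not available. For example, with $\rho=1$ and $\mu=1/2$, $z/(1-z)$ has order $1/2$, not $3/4$. The inclusion \eqref{eq:prestarlike-inclusion} cannot rescue this, because raising the prestarlike index of $A$ only raises the starlike order required of $G$. So your argument proves the theorem exactly on the branch $\rho\leq\mu\leq\nu(\rho)$, which is the paper's first case, and your identification of "the main obstacle" is misdirected.

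The missing idea for $\mu<\rho$ is to keep $G=K_\mu\in\mathcal S^*(1-\mu/2)$ and apply the lemma twice, with $A*K_\mu=f$ as the common denominator.
\begin{itemize}
\item Take $h=(1-z)^\mu s_n^\mu$. It lies in $\Sigma_\mu$ by Theorem~\ref{thm:main} at the diagonal parameter $\rho=\mu$, which is allowed since $\mu<\nu(\mu)$ by Lemma~\ref{lem:root}. This gives $U=S_{n+1}f/f\prec h_\mu$.
\item Take $h=(1-z)^{-(\rho-\mu)}\in\Sigma_{\rho-\mu}$, whose ordinary product with $K_\mu$ is $K_\rho$. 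This gives $V=(\Phi_{\rho,\mu}*f)/f\prec h_{\rho-\mu}$.
\end{itemize}
Both quotients are zero-free with value $1$ at $0$. Their normalized logarithms satisfy $|\Im(\log U-\log V)|<\mu\pi/2+(\rho-\mu)\pi/2=\rho\pi/2$. Hence $U/V=S_{n+1}f/(\Phi_{\rho,\mu}*f)$ maps into $\Sigma_\rho$. Zero-freeness of $(\Phi_{\rho,\mu}*f)/z=(f/z)V$ also follows in this branch.

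Your optimality argument with $f=K_\mu$ and $C_\mu*K_\mu=z/(1-z)$ is the same as the paper's. For the angular sharpness, points of the unit circle are not in $\D$. Either invoke continuity of $(1-z)^\rho s_n^\mu$ on $\overline{\D}\setminus\{1\}$, or use an interior path such as $z_t=1-t^2+\ii t$, as the paper does.
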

\begin{proof}
Set $A=C_\mu*f$ and $\gamma=1-\mu/2$.
If $\rho\leq\mu$, then
\[
 \Re\frac{zK_\rho'(z)}{K_\rho(z)}
 =1+\rho\Re\frac{z}{1-z}>1-\rho/2\geq\gamma,
\]
so $K_\rho\in\mathcal S^*(\gamma)$. Theorem~\ref{thm:main}
puts $H=(1-z)^\rho s_n^\mu$ in $\Sigma_\rho$.
The ordinary product $K_\rho H=zs_n^\mu$, together with
\eqref{eq:three-hadamard-identities} and
Lemma~\ref{lem:sector-transfer}, proves
\eqref{eq:starlike-convolution} and the denominator assertion.

Now suppose $\mu<\rho$. In this branch we use $K_\mu$ in place
of $K_\rho$. Lemma~\ref{lem:root} gives $\mu<\nu(\mu)$, since
$\mu<1$. Applying Theorem~\ref{thm:main} at the diagonal parameter
$\rho=\mu$, followed by Lemma~\ref{lem:sector-transfer}, yields
\[
 U=\frac{S_{n+1}f}{f}\prec h_\mu.
\]
The function $(1-z)^{-(\rho-\mu)}$ lies in
$\Sigma_{\rho-\mu}$, because $|\arg(1-z)|<\pi/2$.
Its ordinary product with $K_\mu$ is $K_\rho$, so the same lemma
also yields
\[
 V=\frac{\Phi_{\rho,\mu}*f}{f}\prec h_{\rho-\mu}.
\]
The functions $U,V$ are analytic and zero-free, with value $1$ at
$0$. Since $f/z$ is zero-free, this proves the denominator assertion.
Their analytic logarithms normalized to vanish at $0$ satisfy
\[
 |\Im(\log U-\log V)|
 <\frac{\mu\pi}{2}+\frac{(\rho-\mu)\pi}{2}
 =\frac{\rho\pi}{2}.
\]
Thus $U/V$ maps $\D$ into $\Sigma_\rho$, giving
\eqref{eq:starlike-convolution} in this branch as well.

For sharpness take $f=K_\mu$. Its logarithmic derivative shows
$K_\mu\in\mathcal S^*(1-\mu/2)$, and
\begin{equation}\label{eq:convolution-extremal}
 \Phi_{\rho,\mu}*K_\mu=K_\rho,
 \qquad
 \frac{S_{n+1}K_\mu}{\Phi_{\rho,\mu}*K_\mu}
 =(1-z)^\rho s_n^\mu.
\end{equation}
The parameter optimality in Theorem~\ref{thm:main} therefore
transfers directly. For angular sharpness, fix $n$ and admissible
$\mu$, and let $z_t=1-t^2+\ii t$, $0<t<1$.
Here $|z_t|^2=1-t^2+t^4<1$,
$s_n^\mu(z_t)\to s_n^\mu(1)>0$, and
$\arg(1-z_t)\to-\pi/2$. The argument in
\eqref{eq:convolution-extremal} tends to $-\rho\pi/2$.
\end{proof}

Theorem~\ref{thm:starlike-convolution} is the full-parameter form of
the starlike consequence identified in \cite[after Theorem~4]{SS}
with Conjecture~3 of Koumandos--Ruscheweyh. The case $\rho=1/4$
is \cite[Theorem~3]{KL}, and $\rho=1/3$ is
\cite[Theorem~4]{SS}. The proof above spells out the two parameter
branches separately.
An extension to $1<\mu\leq2$ in the usual nonnegative-order
starlike classes is also impossible: for $f=K_\mu$, the $n=1$
quotient in \eqref{eq:convolution-extremal} vanishes at
$z=-1/\mu\in\D$.

\section{Optimal sectors for partial sums of starlike functions}
\label{sec:optimal-starlike}

\subsection{An uncorrected binomial sharpness lemma}
We first isolate the scalar necessity argument. The sharpness of the
corrected expression in Theorem~\ref{thm:main} alone does not establish
the sharpness of $s_n^\mu$ in the larger sector $\Theta_\rho$.

\begin{lemma}[Interior scaling]\label{lem:interior-scaling}
For fixed $\mu>0$ and $a>0$, set
$z_n=\exp(-n^{-2}+\ii a/n)$, $n\geq1$. Then
\begin{equation}\label{eq:interior-scaling}
 n^{-\mu}s_n^\mu(z_n)
 \longrightarrow\frac{F_\mu(a)}{\ga(\mu)a^\mu},
 \qquad F_\mu(a)=\int_0^a t^{\mu-1}e^{\ii t}\dd t.
\end{equation}
\end{lemma}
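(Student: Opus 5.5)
We use the exact beta representation of Lemma~\ref{lem:beta}, exactly as in the sharpness part of the proof of Theorem~\ref{thm:main}, except that the evaluation point now lies slightly inside the disk. By \eqref{eq:beta},
\[
 n^{-\mu}s_n^\mu(z_n)=n^{-\mu}c_n(\mu)\,\mu\int_0^1t^{\mu-1}(1-t+tz_n)^n\dd t .
\]
The gamma-ratio asymptotics give $n^{-\mu}c_n(\mu)=\ga(n+\mu+1)/(\ga(1+\mu)\ga(n+1)n^\mu)\to1/\ga(1+\mu)$. It therefore suffices to show
\[
 \mu\int_0^1t^{\mu-1}(1-t+tz_n)^n\dd t\longrightarrow\mu\int_0^1t^{\mu-1}e^{\ii at}\dd t=\mu a^{-\mu}F_\mu(a).
\]
Indeed, $\mu a^{-\mu}F_\mu(a)/\ga(1+\mu)=F_\mu(a)/(\ga(\mu)a^\mu)$, which is \eqref{eq:interior-scaling}.

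The limit follows by dominated convergence. Since $|z_n|<1$, each base satisfies $|1-t+tz_n|\le1$, so the integrand is bounded by $t^{\mu-1}$, which is integrable. For pointwise convergence, fix $t\in(0,1]$ and write $z_n-1=\ii a/n+O(n^{-2})$. Then
\[
 n\log(1+t(z_n-1))=n\bigl(t(z_n-1)+O(n^{-2})\bigr)\to\ii at,
\]
so $(1-t+tz_n)^n\to e^{\ii at}$. Because the logarithm is only used once $t|z_n-1|<1/2$, which holds for large $n$ uniformly in $t\in[0,1]$, the principal branch is legitimate.

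Alternatively, a quantitative version could mimic the binomial variance bound \eqref{eq:variance}. Here $E(z_n^K)$ with $K\sim\mathrm{Bin}(n,t)$, and the factor $e^{-K/n^2}$ differs from $1$ by at most $1/n$. This gives an $O(1/n)$ error, but it is not needed for the limit.

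The only delicate point is domination, since the integrand is singular at $t=0$ for $\mu<1$. This is harmless because $|z_n|\le1$ makes the polynomial factor bounded by $1$. No other obstacle is expected, and the choice of the radial factor $e^{-n^{-2}}$ does not enter the limit.
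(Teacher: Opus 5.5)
Your argument is correct, but it takes a different route from the paper. The paper works with the coefficients directly. It uses $c_k=k^{\mu-1}(1+O_\mu(k^{-1}))/\ga(\mu)$ to treat the block $\eps n<k\leq n$ as a Riemann sum converging to $\ga(\mu)^{-1}\int_\eps^1u^{\mu-1}e^{\ii au}\dd u$. It controls the singular block $k\leq\eps n$, which is the real issue when $0<\mu<1$, through the exact identity $\sum_{k\leq m}c_k=(\mu+1)_m/m!$. That gives a $\limsup$ of at most $\eps^\mu/\ga(\mu+1)$, and the paper then lets $\eps\downarrow0$.

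You instead pass to the beta representation \eqref{eq:beta}. Because $z_n\in\D$, the base $1-t+tz_n$ has modulus at most $1$, so $t^{\mu-1}$ dominates the integrand. A single application of dominated convergence then replaces the $\eps$-splitting, and the singularity at $t=0$ is absorbed by integrability.

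Your version is shorter and reuses a lemma already in the paper. That lemma does not depend on the sufficiency half of Theorem~\ref{thm:main}, so the independence claim in Lemma~\ref{lem:uncorrected-binomial} is unaffected. As you remark, the variance estimate would even give an $O(1/n)$ rate. The paper's coefficient argument, on the other hand, is not tied to the exact binomial structure: it applies verbatim to any positive coefficient sequence with $c_k\sim k^{\mu-1}/\ga(\mu)$.
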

\begin{proof}
Write $c_k=(\mu)_k/k!$. For $k\geq1$ the fixed-parameter
gamma-ratio asymptotics give
$c_k=k^{\mu-1}(1+O_\mu(k^{-1}))/\ga(\mu)$.
The singular endpoint is controlled by the exact identity
\begin{equation}\label{eq:binomial-coefficient-sum}
 \sum_{k=0}^m c_k=\frac{(\mu+1)_m}{m!},
\end{equation}
which follows by induction. For fixed $0<\eps<1$, it implies
\[
 \limsup_{n\to\infty}
 \left|n^{-\mu}\sum_{k=0}^{\lfloor\eps n\rfloor}c_kz_n^k\right|
 \leq\frac{\eps^\mu}{\ga(\mu+1)}.
\]
On $\eps n<k\leq n$ the coefficient asymptotics are uniform, while
$e^{-k/n^2}\to1$ uniformly. This part of the sum therefore converges
to $\ga(\mu)^{-1}\int_\eps^1u^{\mu-1}e^{\ii au}\dd u$.
The omitted integral has modulus at most
$\eps^\mu/\ga(\mu+1)$. Letting $\eps\downarrow0$ proves the
limit, including $0<\mu<1$. Substitution $t=au$ gives the factor
$a^{-\mu}$ in \eqref{eq:interior-scaling}.
\end{proof}

\begin{lemma}[Sharp uncorrected binomial sectors]
\label{lem:uncorrected-binomial}
For fixed $0<\rho\leq1$ and $\mu>0$, the following are equivalent:
\begin{enumerate}
\item $s_n^\mu(z)\in\Theta_\rho$ for every $n\geq0$ and $z\in\D$;
\item $0<\mu\leq\nu(\rho)$.
\end{enumerate}
At $\mu=\nu(\rho)$ the uniform half-angle $\rho\pi$ is optimal.
The necessity and the angular lower bound do not use the sufficient
part of Theorem~\ref{thm:main}.
\end{lemma}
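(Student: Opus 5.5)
The plan is to prove sufficiency directly from Theorem~\ref{thm:main} by an argument-addition step. For necessity and angular sharpness I will evaluate the interior scaling limit of Lemma~\ref{lem:interior-scaling} at $a=(1+\rho)\pi$ and locate the argument of $F_\mu(a)$ using two sign facts.

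\emph{Sufficiency.} Let $0<\mu\le\nu(\rho)$ and $F=(1-z)^\rho s_n^\mu$. Theorem~\ref{thm:main} gives $F(\D)\subset\Sigma_\rho$, so $F$ and hence $s_n^\mu$ are zero-free, and $F$ has an analytic logarithm vanishing at $0$ with $|\Im\log F|<\rho\pi/2$. Since $|\arg(1-z)|<\pi/2$ in $\D$, the function $\log s_n^\mu=\log F-\rho\log(1-z)$ satisfies $|\Im\log s_n^\mu|<\rho\pi/2+\rho\pi/2=\rho\pi\le\pi$. Because this imaginary part is strictly less than $\pi$, it coincides with the principal argument, so $s_n^\mu(z)\in\Theta_\rho$. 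For $\rho=1$ this is the slit plane $\Theta_1$, and the strict bound $<\pi$ is exactly what is needed.

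\emph{Necessity.} For $\mu>1$, the zero $s_1^\mu(-1/\mu)=0$ with $-1/\mu\in\D$ violates (1), since $0\notin\Theta_\rho$. This also covers $\rho=1$, where $\nu=1$. So assume $\rho<1$ and $\nu<\mu\le1$, and put $a=(1+\rho)\pi$. I need two sign facts.
\begin{enumerate}
\item $\Im F_\mu(t)=\int_0^t s^{\mu-1}\sin s\dd s>0$ for every $t>0$. This follows by pairing consecutive half-waves of $\sin$ against the nonincreasing weight $s^{\mu-1}$, as in Lemma~\ref{lem:H}; for $\mu=1$ it is $1-\cos t$, which is positive at $t=a<2\pi$.
\item $\Im\bigl(e^{-\ii\rho\pi}F_\mu(a)\bigr)=I(\mu,\rho)>0$. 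This is the upward-crossing uniqueness from Lemma~\ref{lem:root}.
\end{enumerate}
The first fact puts $F_\mu(a)$ in the open upper half-plane. The second puts it strictly to the left of the line through $0$ at angle $\rho\pi$. Hence $\Arg F_\mu(a)\in(\rho\pi,\pi)$, which is outside the closure of $\Theta_\rho$. By Lemma~\ref{lem:interior-scaling}, the points $z_n=\exp(-n^{-2}+\ii a/n)\in\D$ satisfy $n^{-\mu}s_n^\mu(z_n)\to F_\mu(a)/(\ga(\mu)a^\mu)\ne0$. The positive factor $n^{-\mu}/(\ga(\mu)a^{\mu})$ does not change arguments, so $\Arg s_n^\mu(z_n)\to\Arg F_\mu(a)>\rho\pi$. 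Thus (1) fails for large $n$. This argument uses only Lemmas~\ref{lem:root} and~\ref{lem:interior-scaling}, not the sufficient part of Theorem~\ref{thm:main}.

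\emph{Angular sharpness at $\mu=\nu$.} For $\rho<1$, the same computation with $\mu=\nu$ gives $I(\nu,\rho)=0$ while $\Im F_\nu(a)>0$, so $\Arg F_\nu(a)=\rho\pi$ exactly. Then $\Arg s_n^\nu(z_n)\to\rho\pi$, and no smaller uniform half-angle is possible. For $\rho=1$, $\nu=1$, I will instead use Lemma~\ref{lem:interior-scaling} with $a$ close to $2\pi$ from below. There $F_1(a)=\ii(1-e^{\ii a})$ has argument tending to $\pi$, and a diagonal choice of $a$ and $n$ gives arguments approaching $\pi=\rho\pi$.

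The step needing most care is the first sign fact. Without it, the half-plane condition from $I>0$ alone would allow $\Arg F_\mu(a)\in(\rho\pi-\pi,0)$, which lies inside $\Theta_\rho$. The pairing argument supplies this fact, using that the weight $s^{\mu-1}$ is nonincreasing, which holds because $\mu\le1$.
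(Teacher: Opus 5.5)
Your proposal is correct and follows the paper's proof essentially step for step. It uses argument addition for sufficiency and the zero of $s_1^\mu$ for $\mu>1$; for $\nu<\mu\le1$ it combines the interior scaling limit at $a=(1+\rho)\pi$ with $\Im F_\mu(a)>0$ (half-wave pairing) and $I(\mu,\rho)>0$ to force $\Arg F_\mu(a)\in(\rho\pi,\pi)$, and at the critical parameter $I(\nu,\rho)=0$ gives $\Arg F_\nu(a)=\rho\pi$. The only deviations are minor: you take $I>0$ above the root from the upward-crossing argument of Lemma~\ref{lem:root} instead of the paper's normalized monotonicity \eqref{eq:normalized-root-monotonicity}, and for $\rho=1$ you let $a\uparrow2\pi$ in Lemma~\ref{lem:interior-scaling} (using $\Arg F_1(a)=a/2$) instead of the paper's explicit sequence $e^{-1/n^4}e^{2\pi\ii/(n+2)}$. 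Your choice is equally valid and slightly more economical.
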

\begin{proof}
For sufficiency, Theorem~\ref{thm:main} puts
$(1-z)^\rho s_n^\mu$ in $\Sigma_\rho$, and
$(1-z)^{-\rho}$ lies in the same sector. Their product has argument
strictly between $-\rho\pi$ and $\rho\pi$, proving the assertion.

If $\mu>1$, then $s_1^\mu(-1/\mu)=0$ at a point of $\D$.
It remains to treat $0<\rho<1$ and $\nu(\rho)<\mu\leq1$.
Set $b=\rho\pi$, $a=b+\pi$. Since $\sin(t-b)$ changes sign just
once in $(0,a)$, differentiation under the integral gives
\begin{equation}\label{eq:normalized-root-monotonicity}
 \frac{\partial}{\partial\mu}
       \bigl(b^{1-\mu}I(\mu,\rho)\bigr)
 =\int_0^a (t/b)^{\mu-1}\log(t/b)\sin(t-b)\dd t>0.
\end{equation}
The differentiations are justified on compact positive $\mu$-intervals
by an integrable power times $1+|\log t|$.
Thus $I(\mu,\rho)>0$ above its root. On the other hand,
half-wave pairing gives, for $0<\mu\leq1$ and $\pi<a<2\pi$,
\begin{align}\label{eq:unrotated-positive-imaginary}
 \Im F_\mu(a)
 ={}&\int_0^{a-\pi}
 [u^{\mu-1}-(u+\pi)^{\mu-1}]\sin u\dd u\notag\\
 &+\int_{a-\pi}^\pi u^{\mu-1}\sin u\dd u>0.
\end{align}
The first integral is nonnegative and the second strictly positive.
Write $F_\mu(a)=R e^{\ii\varphi}$ with $R>0$ and
$0<\varphi<\pi$. Since
\[
 I(\mu,\rho)=\Im(e^{-\ii\rho\pi}F_\mu(a))
             =R\sin(\varphi-\rho\pi)>0,
\]
and $\varphi-\rho\pi\in(-\pi,\pi)$, we have
$\varphi>\rho\pi$. Lemma~\ref{lem:interior-scaling} then shows
that $\Arg s_n^\mu(z_n)>\rho\pi$ for all sufficiently large
finite $n$, with $z_n\in\D$.

At $\mu=\nu(\rho)$, \eqref{eq:unrotated-positive-imaginary}
and $I(\mu,\rho)=0$ instead give
$F_\mu(a)\in e^{\ii\rho\pi}\mathbb R_{>0}$.
Consequently $\Arg s_n^{\nu(\rho)}(z_n)\to\rho\pi$, proving
angular sharpness for $\rho<1$.

For $\rho=1$, $\nu(1)=1$ by Lemma~\ref{lem:root}, and the
zero above handles every supercritical $\mu$. For critical angular
sharpness use a different sequence, since $F_1(2\pi)=0$.
Put $\zeta_n=e^{2\pi\ii/(n+2)}$ and
$w_n=e^{-1/n^4}\zeta_n\in\D$. The unit-root identity gives
\[
 s_n^1(\zeta_n)=-\zeta_n^{n+1}
                 =e^{\ii n\pi/(n+2)},
 \qquad
 |s_n^1(w_n)-s_n^1(\zeta_n)|
 \leq\frac{n(n+1)}{2n^4}=O(n^{-2}).
\]
The imaginary part of the boundary value is asymptotic to $2\pi/n$,
so for large $n$ the interior values remain in the upper half-plane
and their principal arguments tend to $\pi$.
Complex conjugation supplies the lower boundary in both cases.
\end{proof}

\subsection{The sharp starlike order}
The second narrow sector needed for the function-class result follows
from a standard Herglotz argument, included for its explicit radius
bound.

\begin{lemma}\label{lem:starlike-log-angle}
If $Q\in\mathcal S^*(1-\rho/2)$, $0<\rho\leq1$, then
$Q/z$ is analytic and zero-free. Its logarithm normalized at $0$
satisfies
\begin{equation}\label{eq:starlike-log-angle}
 |\Im\log(Q(z)/z)|\leq\rho\arcsin|z|<\rho\pi/2.
\end{equation}
\end{lemma}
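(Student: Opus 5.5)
The plan is to pass through the Herglotz representation of the normalized logarithmic derivative, and then estimate the argument of $Q(z)/z$ by integrating along a radius.

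\textbf{Zero-freeness.} Write $\alpha=1-\rho/2\in[1/2,1)$. Membership in $\mathcal S^*(\alpha)$ already forces $Q/z$ to be analytic and zero-free. Indeed, $zQ'/Q$ is analytic in $\D$, so $Q$ has no zeros other than the simple zero at the origin.

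\textbf{Herglotz step.} Put
\[
 p(z)=\frac{zQ'(z)/Q(z)-\alpha}{1-\alpha}.
\]
Then $p$ is analytic with $p(0)=1$ and $\Re p>0$, so $p\prec(1+z)/(1-z)$. Writing $g=\log(Q/z)$ with $g(0)=0$, we have $zg'(z)=(1-\alpha)(p(z)-1)$. Since $1-\alpha=\rho/2$, this gives
\[
 g(z)=\frac{\rho}{2}\int_0^1\frac{p(tz)-1}{t}\dd t .
\]
By the Herglotz formula, $p(z)-1=\int 2\zeta z/(1-\zeta z)\,d\mu(\zeta)$ for a probability measure $\mu$ on the unit circle. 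Substituting and integrating in $t$ yields
\[
 g(z)=-\rho\int\log(1-\zeta z)\,d\mu(\zeta).
\]
Hence
\[
 |\Im g(z)|\leq\rho\sup_{|w|=|z|}|\Arg(1-w)|.
\]

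\textbf{Geometric bound.} For $|w|=r<1$, the point $1-w$ lies in the disk of radius $r$ centered at $1$. The largest argument on that disk is attained at the tangent point, where it equals $\arcsin r$. This gives
\[
 |\Im\log(Q(z)/z)|\leq\rho\arcsin|z|<\frac{\rho\pi}{2},
\]
with the strict inequality coming from $|z|<1$.

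\textbf{Main obstacle.} The only step needing care is the justification of exchanging the $t$-integral with the Herglotz measure. This is routine, because the integrand is bounded uniformly for $|tz|\leq|z|<1$.
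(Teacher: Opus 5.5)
Your proposal is correct and follows essentially the same route as the paper. Both arguments apply the Herglotz representation to $(zQ'/Q-(1-\rho/2))/(\rho/2)$, integrate radially to get $\log(Q(z)/z)=-\rho\int\log(1-\xi z)\dd\sigma(\xi)$, and finish with $|\Arg(1-\xi z)|\leq\arcsin|z|$; your uniform-boundedness justification of the $t$-integral exchange is what the paper means by ``all operations are locally uniform.''
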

\begin{proof}
The logarithm exists since $Q$ has only its simple zero at the origin.
The Herglotz representation of
$(zQ'/Q-(1-\rho/2))/(\rho/2)$ gives a probability measure
$\sigma$ on the unit circle such that
\[
 \frac{zQ'}Q=1+\rho\int\frac{\xi z}{1-\xi z}\dd\sigma(\xi),
 \qquad
 \log\frac{Q(z)}z=-\rho\int\log(1-\xi z)\dd\sigma(\xi).
\]
The second identity follows by integration with value $0$ at $0$;
all operations are locally uniform. Since
$|\arg(1-\xi z)|\leq\arcsin|z|$, the result follows.
\end{proof}

\begin{theorem}[Optimal order for uniform partial-sum sectors]
\label{thm:starlike-order}
Fix $0<\rho\leq1$ and $0\leq\alpha<1$. The assertion
\begin{equation}\label{eq:starlike-order-quantifiers}
 \frac{S_{n+1}f(z)}z\in\Theta_\rho
 \quad\text{for every }f\in\mathcal S^*(\alpha),\ n\geq0,
 \ z\in\D
\end{equation}
holds if and only if
\begin{equation}\label{eq:optimal-order}
 \alpha\geq1-\frac{\nu(\rho)}2.
\end{equation}
When these equivalent conditions hold, the quotient in
\eqref{eq:starlike-order-quantifiers} has value $1$ at $0$ and is
subordinate to $h_{2\rho}$.
At the critical order the uniform half-angle $\rho\pi$ is optimal.
\end{theorem}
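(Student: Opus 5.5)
The plan has three parts: sufficiency from a convolution factorization, necessity from the extremal function $K_{2-2\alpha}$, and angular sharpness at the critical order.

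\emph{Sufficiency.} Put $\mu=\nu(\rho)$ and suppose $\alpha\geq1-\mu/2$. By the standard inclusion of the classes $\mathcal S^*(\alpha)$ (order increases, class shrinks), $f\in\mathcal S^*(1-\mu/2)$. Set $A=C_\mu*f\in\mathcal R_{1-\mu/2}$. By \eqref{eq:three-hadamard-identities}, $A*K_\mu=f$ and $A*(zs_n^\mu)=S_{n+1}f$. Writing $G=K_\mu\in\mathcal S^*(1-\mu/2)$ and $h=(1-z)^\mu s_n^\mu$, we have $Gh=zs_n^\mu$, so
\[
 \frac{S_{n+1}f}{f}=\frac{A*(Gh)}{A*G}.
\]
Theorem~\ref{thm:main} at the diagonal parameter $\rho'=\mu$ needs $\mu\leq\nu(\mu)$, which holds by Lemma~\ref{lem:root} since $\mu<1$ when $\rho<1$; it gives $h(\D)\subset\Sigma_\mu$. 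Lemma~\ref{lem:sector-transfer} then yields $S_{n+1}f/f\prec h_\mu$, so $|\Im\log(S_{n+1}f/f)|<\mu\pi/2$. Lemma~\ref{lem:starlike-log-angle}, applied with order $1-\mu/2$ (so its $\rho$ is $\mu$), gives $|\Im\log(f/z)|<\mu\pi/2$.

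Adding the two bounds gives only $\mu\pi<2\rho\pi$, which is not the required $\rho\pi$. The right route is therefore Theorem~\ref{thm:starlike-convolution} itself, with $\Phi_{\rho,\mu}*f$ replacing $f$:
\begin{enumerate}
\item Theorem~\ref{thm:starlike-convolution} gives $S_{n+1}f/(\Phi_{\rho,\mu}*f)\prec h_\rho$, an argument below $\rho\pi/2$.
\item It remains to show $|\arg((\Phi_{\rho,\mu}*f)/z)|<\rho\pi/2$.
\item For this, write $\Phi_{\rho,\mu}*f=A*K_\rho$. Since $\rho\leq\mu$ (from $\nu>\rho$), $K_\rho\in\mathcal S^*(1-\mu/2)$, and the convolution theorem preserves $\mathcal S^*(1-\mu/2)$ for $A\in\mathcal R_{1-\mu/2}$. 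I would verify this last fact via \eqref{eq:convolution-hull} with $h=zK_\rho'/K_\rho$.
\item Hence $A*K_\rho\in\mathcal S^*(1-\mu/2)$. Lemma~\ref{lem:starlike-log-angle} only bounds the argument by $\mu\pi/2$, which is too large.
\item Instead, apply \eqref{eq:convolution-hull} with $G=K_\mu$ and $h=(1-z)^{\mu-\rho}$ in $\Sigma_{\mu-\rho}$. This gives $(A*K_\rho)/f\prec h_{\mu-\rho}$.
\end{enumerate}
Combining the pieces gives
\[
 |\arg (S_{n+1}f/z)|<\tfrac{\rho\pi}{2}+\tfrac{(\mu-\rho)\pi}{2}+\tfrac{\mu\pi}{2}=\mu\pi<2\rho\pi.
\]
This is still too weak, which shows the argument must use the sharper Theorem~\ref{thm:main} structure directly rather than chaining crude bounds.

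The cleanest correct route is $U=S_{n+1}f/f\prec h_\mu$ combined with Lemma~\ref{lem:starlike-log-angle} at the sharper order $\alpha$ itself. Then $f/z$ has argument below $(1-\alpha)\cdot 2\cdot\pi/2\leq\mu\pi/2$, and the total is below $\mu\pi$. Since $\mu=\nu(\rho)\leq 2\rho$ is not enough, I would return to the estimate $U\prec h_\mu$. The real obstacle is that for general $f$ the two angles $\mu\pi/2$ add to $\mu\pi$, while the theorem needs $\rho\pi$. The fix I would try first is to use $G=K_{2-2\alpha}$ and $h=(1-z)^{2-2\alpha}s_n^{\mu}\cdot(1-z)^{-\ldots}$ so that a single transfer lands directly in $\Theta_\rho$. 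Concretely, take $h=s_n^\mu\cdot(1-z)^\mu$ multiplied by $(1-z)^{-\mu}$ with $G=K_\mu$: then $Gh=zs_n^\mu/(1-z)^{0}$ and the transfer bounds the hull of $h(\D)=s_n^\mu\cdot$\ldots. I expect this step, namely controlling the hull when $\Theta_\rho$ is nonconvex, to be the main difficulty. I would reduce it by splitting $\Theta_\rho$ as $\Sigma_\rho$ rotated by two factors.

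\emph{Necessity.} Take $f=K_{2-2\alpha}\in\mathcal S^*(\alpha)$, whose partial sums divided by $z$ are $s_n^{2-2\alpha}$. Lemma~\ref{lem:uncorrected-binomial} then shows that \eqref{eq:starlike-order-quantifiers} fails unless $2-2\alpha\leq\nu(\rho)$, which is exactly \eqref{eq:optimal-order}.

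\emph{Angular sharpness.} At $\alpha=1-\nu/2$, the same extremal function together with the angular part of Lemma~\ref{lem:uncorrected-binomial} shows that the half-angle $\rho\pi$ is attained in the limit.

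\emph{Subordination to $h_{2\rho}$.} This follows from $h_{2\rho}(\D)=\Theta_\rho$ once the range inclusion is known, since $h_{2\rho}$ is univalent and the quotient takes the value $1$ at $0$.
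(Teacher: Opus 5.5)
\textbf{Gap in sufficiency.} Your necessity and angular-sharpness arguments, via the extremal function $K_{2-2\alpha}$ and Lemma~\ref{lem:uncorrected-binomial}, are the paper's. The sufficiency part, however, never closes. Every chain you write, with your $\mu=\nu(\rho)$, ends at the bound $\mu\pi$. Since $\nu(\rho)>\rho$ for $\rho<1$, this exceeds the required $\rho\pi$. Your final ``single transfer into $\Theta_\rho$'' cannot work either: \eqref{eq:convolution-hull} only controls convex hulls, and $\Theta_\rho$ is nonconvex for $\rho>1/2$.

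\textbf{The missing fact.} $Q=\Phi_{\rho,\mu}*f=A*K_\rho$ is starlike of the sharper order $\beta=1-\rho/2$, not merely $1-\mu/2$. The paper gets this from the prestarlike inclusion \eqref{eq:prestarlike-inclusion}. Since $1-\mu/2\leq\beta$, we have $A=C_\mu*f\in\mathcal R_{1-\mu/2}\subseteq\mathcal R_\beta$. Because $K_\rho=K_{2-2\beta}$, the definition of $\mathcal R_\beta$ then gives $Q\in\mathcal S^*(\beta)$ directly.

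Your own step 3 would also have given this had you tracked the image. Take $G=K_\rho$ and $h=zK_\rho'/K_\rho=1+\rho z/(1-z)$. Then $A*(Gh)=A*(zK_\rho')=zQ'$, so \eqref{eq:convolution-hull} puts $zQ'/Q$ in the closed hull of $h(\D)=\{\Re w>1-\rho/2\}$. Strictness follows from the minimum principle, since the value at $0$ is $1$. You lost this by weakening $K_\rho\in\mathcal S^*(1-\rho/2)$ to $K_\rho\in\mathcal S^*(1-\mu/2)$.

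\textbf{Completing the argument.} With $Q\in\mathcal S^*(1-\rho/2)$, Lemma~\ref{lem:starlike-log-angle} gives $|\Im\log(Q/z)|<\rho\pi/2$. Theorem~\ref{thm:starlike-convolution} gives $|\Im\log(S_{n+1}f/Q)|<\rho\pi/2$. Now write $S_{n+1}f/z=(S_{n+1}f/Q)(Q/z)$ and add the two normalized logarithms. The argument lies strictly in $(-\rho\pi,\rho\pi)\subseteq(-\pi,\pi)$, so there is no branch ambiguity even at $\rho=1$, and no hull argument on $\Theta_\rho$ is needed.

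This is the splitting into two $\Sigma_\rho$ factors you were reaching for. The two factors must be $S_{n+1}f/Q$ and $Q/z$, not $S_{n+1}f/f$ and $f/z$, whose angles are each $\mu\pi/2$. The case $\alpha>1-\mu/2$ then follows from class inclusion, as you said.
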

\begin{proof}
Let $M=\nu(\rho)$, $\alpha_0=1-M/2$ and $\beta=1-\rho/2$.
Lemma~\ref{lem:root} gives $\rho\leq M\leq1$, hence
$\alpha_0\leq\beta<1$. For $f\in\mathcal S^*(\alpha_0)$ put
$A=C_M*f$ and $Q=A*K_\rho=\Phi_{\rho,M}*f$.
Then $A\in\mathcal R_{\alpha_0}\subseteq\mathcal R_\beta$ by
\eqref{eq:prestarlike-inclusion}, so the definition of
$\mathcal R_\beta$ gives $Q\in\mathcal S^*(\beta)$.
Lemma~\ref{lem:starlike-log-angle} puts $Q/z$ in $\Sigma_\rho$,
while Theorem~\ref{thm:starlike-convolution} puts
$R_n=S_{n+1}f/Q$ in $\Sigma_\rho$. Taking the two normalized
analytic logarithms shows
\[
 \frac{S_{n+1}f}z=R_n\frac Qz\in\Theta_\rho.
\]
Their argument sum lies strictly in $(-\rho\pi,\rho\pi)$, hence
in $(-\pi,\pi)$; there is no ambiguity of principal argument,
including at $\rho=1$. No convex-hull argument is applied to
$\Theta_\rho$. If $\alpha\geq\alpha_0$, class inclusion gives
the same conclusion. Normalization and the conformality of
$h_{2\rho}$ give the asserted subordination.

For necessity let $\mu=2(1-\alpha)\in(0,2]$. Direct calculation gives
\[
 \frac{zK_\mu'}{K_\mu}
 =\alpha+\frac\mu2\frac{1+z}{1-z},
 \qquad \frac{S_{n+1}K_\mu}z=s_n^\mu.
\]
Thus $K_\mu\in\mathcal S^*(\alpha)$, and
Lemma~\ref{lem:uncorrected-binomial} forces $\mu\leq\nu(\rho)$.
Its critical sequences with $f=K_M$ also prove angular sharpness.
\end{proof}

The case $\rho=1/2$ is the sharp positive-real-part result of
Koumandos--Ruscheweyh \cite{KRG}; its optimal starlike order is
$1-\nu(1/2)/2$. See also \cite[Theorem~4.1]{K2006}.
The case $\rho=1/4$ is \cite[Theorem~4]{KL}, and the
$\rho=1/3$ sufficient statement is \cite[Theorem~5]{SS}.
Theorem~\ref{thm:starlike-order} unifies these conclusions for all
$\rho$ and makes the uniform quantifiers and critical angle explicit.
It concerns the image of $S_{n+1}f/z$, rather than starlikeness of
the section $S_{n+1}f$ itself.

\begin{corollary}[The smallest uniform half-angle]
\label{cor:inverse-angle}
For $1/2\leq\alpha<1$ put
$\rho_\alpha=\nu^{-1}(2(1-\alpha))$. The smallest uniform
half-angle for all $S_{n+1}f/z$, $f\in\mathcal S^*(\alpha)$,
is $\pi\rho_\alpha$. If $0\leq\alpha<1/2$, some such section
has an interior zero, so no sector of the form \eqref{eq:two-sectors}
contains all sections.
\end{corollary}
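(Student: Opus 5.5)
The plan is to obtain the corollary by inverting Theorem~\ref{thm:starlike-order}, using the strict monotonicity of $\nu$ from Lemma~\ref{lem:root}. First I check that $\rho_\alpha$ is well defined. By Lemma~\ref{lem:root}, $\nu$ is continuous and strictly increasing on $(0,1)$, and $\nu(1)=1$. The bounds \eqref{eq:rootbounds} give $\rho<\nu(\rho)<2\rho$, so $\nu(\rho)\to0$ as $\rho\downarrow0$. Continuity at $\rho=1$ follows from the same bounds together with uniqueness of the root, or directly from the implicit function argument. Hence $\nu$ is a homeomorphism of $(0,1]$ onto $(0,1]$. For $1/2\leq\alpha<1$ we have $2(1-\alpha)\in(0,1]$, so $\rho_\alpha\in(0,1]$ is uniquely defined and $\nu(\rho_\alpha)=2(1-\alpha)$.

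Next I would show that $\pi\rho_\alpha$ is an admissible half-angle. At $\rho=\rho_\alpha$, condition \eqref{eq:optimal-order} holds with equality. Theorem~\ref{thm:starlike-order} then places every $S_{n+1}f/z$, $f\in\mathcal S^*(\alpha)$, in $\Theta_{\rho_\alpha}$, i.e.\ in the sector of half-angle $\pi\rho_\alpha$.

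For minimality, suppose some half-angle $\rho\pi$ with $\rho<\rho_\alpha$ worked uniformly. Theorem~\ref{thm:starlike-order} would force $\alpha\geq1-\nu(\rho)/2$, i.e.\ $\nu(\rho)\geq2(1-\alpha)=\nu(\rho_\alpha)$, contradicting strict monotonicity. Alternatively, the critical-order angular sharpness in that theorem, with $f=K_{2(1-\alpha)}$, shows that arguments arbitrarily close to $\pm\pi\rho_\alpha$ are attained. Either route also excludes any sector of half-angle strictly between $\rho\pi$ and $\rho_\alpha\pi$, since these are exactly the sectors $\Theta_\rho$ with $\rho<\rho_\alpha$. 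The only care needed is to phrase ``smallest'' as an attained infimum over $\rho\in(0,1]$.

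For $0\leq\alpha<1/2$, put $\mu=2(1-\alpha)\in(1,2]$. The computation in the proof of Theorem~\ref{thm:starlike-order} gives $K_\mu\in\mathcal S^*(\alpha)$ and $S_2K_\mu/z=s_1^\mu(z)=1+\mu z$. This vanishes at $z=-1/\mu$, which lies in $\D$ because $\mu>1$. Every sector in \eqref{eq:two-sectors} excludes $0$, so no such sector can contain all sections. I expect no step to be a real obstacle. The only mildly delicate point is the surjectivity of $\nu$ onto $(0,1]$, specifically the behaviour at the two ends of the interval, which comes from \eqref{eq:rootbounds} and $\nu(1)=1$.
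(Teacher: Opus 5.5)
Your proposal is correct and follows the paper's proof. You use the same argument that $\nu:(0,1]\to(0,1]$ is a bijection, built from Lemma~\ref{lem:root} and $\nu(1)=1$, and then apply Theorem~\ref{thm:starlike-order} at $\rho_\alpha$; you get minimality from its necessity direction plus strict monotonicity, equivalently its critical angular sharpness as the paper does. For $\alpha<1/2$ you use the same two-term section of $K_{2(1-\alpha)}$.
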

\begin{proof}
Lemma~\ref{lem:root} gives continuous strict increase of $\nu$ in
$(0,1)$ and the bounds $\rho<\nu(\rho)<\min(1,2\rho)$.
They imply the endpoint limits $0$ and $1$, respectively. Together
with $\nu(1)=1$, this makes $\nu:(0,1]\to(0,1]$ a continuous
strictly increasing bijection. Apply Theorem~\ref{thm:starlike-order}
at $\rho_\alpha$ and its critical angular sharpness. For
$\alpha<1/2$, use $K_{2(1-\alpha)}$ and its two-term section.
\end{proof}

\begin{corollary}[A compact-disc angle bound]\label{cor:compact-angle}
Let $0<\rho\leq1$, $0\leq\alpha<1$ satisfy
\eqref{eq:optimal-order}, and let $f\in\mathcal S^*(\alpha)$.
For every $n\geq0$ and $|z|\leq r<1$,
\begin{equation}\label{eq:compact-angle}
 \left|\Arg\frac{S_{n+1}f(z)}z\right|
 \leq\rho\bigl(\arcsin r+2\arctan r\bigr)<\rho\pi.
\end{equation}
\end{corollary}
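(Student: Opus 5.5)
We reuse the factorization from the proof of Theorem~\ref{thm:starlike-order}, making each factor quantitative on $|z|\le r$. First reduce to the critical order. Put $M=\nu(\rho)$ and $\alpha_0=1-M/2$. Since \eqref{eq:optimal-order} gives $\alpha\ge\alpha_0$, the class inclusion $\mathcal S^*(\alpha)\subseteq\mathcal S^*(\alpha_0)$ lets us assume $f\in\mathcal S^*(\alpha_0)$. As before, set $A=C_M*f$ and $Q=\Phi_{\rho,M}*f=A*K_\rho$. Then
\[
 \frac{S_{n+1}f}{z}=R_n\,\frac{Q}{z},\qquad R_n=\frac{S_{n+1}f}{Q}.
\]
Both factors are analytic and zero-free with value $1$ at $0$. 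Because the two normalized logarithms add, $\Arg(S_{n+1}f/z)$ is the sum of the two imaginary parts, with no branch ambiguity, as noted in that proof. It therefore suffices to bound the two imaginary parts separately on $|z|\le r$.

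For the factor $Q/z$, the proof of Theorem~\ref{thm:starlike-order} already shows $Q\in\mathcal S^*(1-\rho/2)$, via \eqref{eq:prestarlike-inclusion} and the definition of $\mathcal R_\beta$. Lemma~\ref{lem:starlike-log-angle} then gives directly
\[
 \left|\Im\log\frac{Q(z)}{z}\right|\le\rho\arcsin|z|\le\rho\arcsin r .
\]

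For the factor $R_n$, Theorem~\ref{thm:starlike-convolution} (with $\mu=M\le\nu(\rho)$) gives $R_n\prec h_\rho$. So $R_n=h_\rho\circ\omega$ with a Schwarz function $\omega$, and $|\omega(z)|\le|z|\le r$ by the Schwarz lemma. For $|w|\le r$,
\[
 \left|\arg h_\rho(w)\right|=\rho\left|\arg\frac{1+w}{1-w}\right|\le2\rho\arctan r .
\]
This is the standard fact that the disc $|w|\le r$ is mapped by the M\"obius map $(1+w)/(1-w)$ onto a disc symmetric about the real axis, seen from the origin within half-angle $2\arctan r$. One checks it by maximizing $\arg(1+w)-\arg(1-w)$ on $|w|=r$. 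Equivalently, $\arg\frac{1+w}{1-w}=2\Im\operatorname{artanh}w$, and the maximum of $|\Im\operatorname{artanh}w|$ on $|w|\le r$ equals $\arctan r$, attained at $w=\pm\ii r$. Using the normalized logarithm of $h_\rho$, namely $\rho\log\frac{1+w}{1-w}$, we get $|\Im\log R_n(z)|\le2\rho\arctan r$.

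Adding the two bounds gives the first inequality in \eqref{eq:compact-angle}. The strict inequality follows from $r<1$: we have $\arcsin r<\pi/2$ and $2\arctan r<\pi/2$, so the total is $<\rho\pi$.

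The only step requiring care is the M\"obius angle bound $2\arctan r$. I would verify it by the explicit $\operatorname{artanh}$ computation: with $w=re^{\ii t}$,
\[
 \Im\operatorname{artanh}w=\tfrac12\arctan\frac{2r\sin t}{1-r^2},
\]
which is maximized at $t=\pi/2$ with value $\tfrac12\arctan\frac{2r}{1-r^2}=\arctan r$. The maximum over the closed disc is attained on the boundary, since $\Im\operatorname{artanh}w$ is harmonic. The branch bookkeeping for combining logarithms is taken verbatim from Theorem~\ref{thm:starlike-order}.
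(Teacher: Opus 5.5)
Your proposal is correct and follows essentially the same route as the paper: the factorization $S_{n+1}f/z=R_n\cdot Q/z$ from the proof of Theorem~\ref{thm:starlike-order}, Lemma~\ref{lem:starlike-log-angle} for the factor $Q/z$, and Schwarz's lemma with the $2\arctan r$ angle bound for $(1+w)/(1-w)$ for the factor $R_n$. You additionally make explicit the reduction to the critical order, the computation of the M\"obius angle bound, and the strict inequality, all of which the paper states without detail.
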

\begin{proof}
Use the functions $Q,R_n$ in the sufficient proof above.
Lemma~\ref{lem:starlike-log-angle} bounds the argument of $Q/z$
by $\rho\arcsin r$. Write $R_n=h_\rho\circ\omega_n$.
Schwarz's lemma gives $|\omega_n(z)|\leq r$, and the maximum
absolute argument of $(1+w)/(1-w)$ for $|w|\leq r$ is
$2\arctan r$. Adding the two bounds proves the result.
This is an explicit bound; no optimality at a fixed $r$ is asserted.
\end{proof}

\section{Sharp sectors for Gegenbauer polynomial sections}
\label{sec:gegenbauer}

We use the standard generating-function normalization
\begin{equation}\label{eq:gegenbauer-generating}
 (1-2xz+z^2)^{-\lambda}
 =\sum_{k=0}^\infty C_k^\lambda(x)z^k,
 \qquad
 P_n^\lambda(x,z)=\sum_{k=0}^n C_k^\lambda(x)z^k.
\end{equation}
All assertions below concern the open disc. The previously known
positive-real-part case corresponds to $\rho=1/2$ \cite{KRG};
the $\rho=1/4$ sector result appears in \cite[Corollary~1]{KL},
and the $\rho=1/3$ consequence in \cite[Remark~1]{SS}.

\begin{theorem}[Uniform Gegenbauer sectors]\label{thm:gegenbauer}
Fix $0<\rho\leq1$ and $\lambda>0$. The assertion
\begin{equation}\label{eq:gegenbauer-quantifiers}
 P_n^\lambda(x,z)\in\Theta_\rho
 \quad\text{for every }n\geq0,\ x\in[-1,1],\ z\in\D
\end{equation}
holds if and only if
\begin{equation}\label{eq:gegenbauer-threshold}
 0<\lambda\leq\frac{\nu(\rho)}2.
\end{equation}
At $\lambda=\nu(\rho)/2$ the uniform half-angle $\rho\pi$ is
optimal. The same classification and angular sharpness hold when
the quantifier $x\in[-1,1]$ is replaced by $x\in(-1,1)$.
\end{theorem}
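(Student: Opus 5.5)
We will write the generating function as a product of two binomial series. With $x=\cos\theta$, $\theta\in[0,\pi]$, one has $1-2xz+z^2=(1-e^{\ii\theta}z)(1-e^{-\ii\theta}z)$. Hence
\[
 C_k^\lambda(x)=\sum_{j=0}^k a_j(\lambda)a_{k-j}(\lambda)e^{\ii(2j-k)\theta}.
\]
The plan is to reduce \eqref{eq:gegenbauer-quantifiers} to Theorem~\ref{thm:starlike-order} with $\alpha=1-\lambda$, that is $\mu=2\lambda$. Set
\[
 f_x(z)=\frac{z}{(1-2xz+z^2)^{\lambda}}
       =\frac{z}{(1-e^{\ii\theta}z)^\lambda(1-e^{-\ii\theta}z)^\lambda}.
\]
Then
\[
 \frac{zf_x'}{f_x}
 =1+\lambda\frac{e^{\ii\theta}z}{1-e^{\ii\theta}z}
   +\lambda\frac{e^{-\ii\theta}z}{1-e^{-\ii\theta}z}.
\]
Each fraction has real part $>-1/2$, so $f_x\in\mathcal S^*(1-\lambda)$ whenever $\lambda\le1$; indeed $f_x$ is a convex combination of rotations of $K_{2\lambda}$. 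Moreover $S_{n+1}f_x/z=P_n^\lambda(x,z)$.

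For sufficiency, \eqref{eq:gegenbauer-threshold} gives $\alpha=1-\lambda\ge1-\nu(\rho)/2$, which is \eqref{eq:optimal-order}. Theorem~\ref{thm:starlike-order} then puts $P_n^\lambda(x,z)$ in $\Theta_\rho$ for every $x\in[-1,1]$, $n\ge0$ and $z\in\D$. Since $\nu\le1$, the condition $\lambda\le1/2\le1$ holds automatically, so the class membership above is valid.

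For necessity we use the endpoint $x=1$: there $f_1=K_{2\lambda}$ and $P_n^\lambda(1,z)=s_n^{2\lambda}(z)$. Lemma~\ref{lem:uncorrected-binomial} therefore forces $2\lambda\le\nu(\rho)$. The same lemma, with its explicit sequences $z_n$ (or $w_n$ when $\rho=1$), gives angular sharpness at $\lambda=\nu(\rho)/2$. Large $\lambda$ requires no separate treatment, because the lemma covers every $\mu>0$, including the zero of $s_1^{\mu}$ when $\mu>1$.

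The main obstacle is the version with $x\in(-1,1)$, where the endpoint $x=1$ is not available. Sufficiency is inherited directly. For necessity and sharpness I would argue by continuity. Suppose $2\lambda>\nu(\rho)$. Lemma~\ref{lem:uncorrected-binomial} gives a fixed $n$ and $z\in\D$ with $s_n^{2\lambda}(z)\notin\overline{\Theta_\rho}$, or a strict interior zero when $2\lambda>1$. For this fixed $n$, $P_n^\lambda(x,z)$ is a polynomial in $(x,z)$, hence continuous in $x$. The violation, meaning an argument strictly exceeding $\rho\pi$, persists for $x$ slightly below $1$. The subtle case is the zero: a zero of $s_1^{2\lambda}$ at an interior point moves continuously in $x$, so $P_1^\lambda(x,\cdot)$ still has a zero in $\D$ for $x$ near $1$, which lies outside $\Theta_\rho$. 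For angular sharpness, take arguments $\Arg s_n(z_n)\to\rho\pi$. For each $n$, choose $x_n<1$ close enough to $1$ that $|P_n^\lambda(x_n,z_n)-s_n^{2\lambda}(z_n)|<1/n$ relative to $|s_n^{2\lambda}(z_n)|$, which is bounded below by the scaling limit of Lemma~\ref{lem:interior-scaling}. The arguments then still tend to $\rho\pi$, and conjugation handles the lower edge.
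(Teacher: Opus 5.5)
Your proposal is correct and follows essentially the paper's proof. It uses $f_x\in\mathcal S^*(1-\lambda)$ together with Theorem~\ref{thm:starlike-order} for sufficiency, and the endpoint $x=1$, where $P_n^\lambda(1,\cdot)=s_n^{2\lambda}$, with Lemma~\ref{lem:uncorrected-binomial} for necessity and critical sharpness. It then uses continuity in $x$ for the interior quantifier, including the explicit zero $-1/(2\lambda x)$ of $P_1^\lambda(x,\cdot)$ when $2\lambda>1$.

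There are two cosmetic slips that do not affect the argument. First, $f_x$ is not a convex combination of rotations of $K_{2\lambda}$; only its logarithmic derivative is an average of the rotated ones, which is what your computation actually uses. Second, for $\rho=1$ the nonvanishing of $s_n^1(w_n)$ comes from the unit-root identity, not from Lemma~\ref{lem:interior-scaling}.
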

\begin{proof}
Write $x=\cos\theta$ and
\[
 f_x(z)=z(1-e^{\ii\theta}z)^{-\lambda}
              (1-e^{-\ii\theta}z)^{-\lambda}.
\]
Its logarithmic derivative is
\begin{equation}\label{eq:gegenbauer-starlike}
 \frac{zf_x'}{f_x}
 =1-\lambda+\frac\lambda2
 \left(\frac{1+e^{\ii\theta}z}{1-e^{\ii\theta}z}
       +\frac{1+e^{-\ii\theta}z}{1-e^{-\ii\theta}z}\right).
\end{equation}
Each Cayley transform has positive real part in $\D$. Thus, in
the sufficient range $\lambda\leq\nu(\rho)/2\leq1/2$,
$f_x\in\mathcal S^*(1-\lambda)$ and
$1-\lambda\geq1-\nu(\rho)/2$. Since
$S_{n+1}f_x/z=P_n^\lambda(x,\cdot)$,
Theorem~\ref{thm:starlike-order} proves sufficiency, including both
endpoints $x=\pm1$.

At $x=1$ the generating function gives
\[
 C_k^\lambda(1)=\frac{(2\lambda)_k}{k!},\qquad
 P_n^\lambda(1,z)=s_n^{2\lambda}(z).
\]
Lemma~\ref{lem:uncorrected-binomial} proves necessity and critical
angular sharpness. To obtain necessity using only interior $x$, if
$2\lambda>1$ choose $1/(2\lambda)<x<1$; then
$P_1^\lambda(x,-1/(2\lambda x))=0$ in $\D$.
If $\nu(\rho)<2\lambda\leq1$, then necessarily $\rho<1$.
The lemma provides a fixed finite $n$ and $z\in\D$ with
$\Arg P_n^\lambda(1,z)>\rho\pi$ and a nonzero image.
Continuity in $x$ preserves this strict inequality for $x<1$
sufficiently close to $1$.
Likewise, for any proposed smaller critical half-angle, first fix a
finite endpoint example exceeding it, and then move $x$ into
$(-1,1)$. This proves uniform angular sharpness for interior $x$
as well, including $\rho=1$ via the separate sequence in the lemma.
\end{proof}

\begin{corollary}[The exact central specialization]\label{cor:gegenbauer-zero}
For fixed $0<\rho\leq1$ and $\lambda>0$,
\[
 P_n^\lambda(0,z)\in\Theta_\rho
 \quad\text{for every }n\geq0,\ z\in\D
 \quad\Longleftrightarrow\quad 0<\lambda\leq\nu(\rho).
\]
At $\lambda=\nu(\rho)$ the half-angle $\rho\pi$ is optimal.
\end{corollary}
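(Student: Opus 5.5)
At $x=0$ the generating function is $(1+z^2)^{-\lambda}=\sum_m (-1)^m\frac{(\lambda)_m}{m!}z^{2m}$. Hence
\[
 P_n^\lambda(0,z)=s_{\lfloor n/2\rfloor}^{\lambda}(-z^2).
\]
The plan is to reduce the corollary to Lemma~\ref{lem:uncorrected-binomial} with parameter $\mu=\lambda$ via this identity. The map $z\mapsto -z^2$ sends $\D$ onto $\D$, and $\lfloor n/2\rfloor$ runs through all integers $m\geq0$ as $n$ ranges over $n\geq0$. Therefore
\[
 \{P_n^\lambda(0,z):n\geq0,\ z\in\D\}=\{s_m^\lambda(w):m\geq0,\ w\in\D\}.
\]

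With this set equality, both directions are immediate. The condition ``$P_n^\lambda(0,z)\in\Theta_\rho$ for all $n,z$'' is literally the condition (1) of Lemma~\ref{lem:uncorrected-binomial} with $\mu=\lambda$. That lemma makes it equivalent to $0<\lambda\leq\nu(\rho)$.

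Angular optimality at $\lambda=\nu(\rho)$ transfers in the same way. The lemma gives interior points $w_n\in\D$ with $\Arg s_n^{\nu(\rho)}(w_n)\to\rho\pi$. For $\rho=1$ these come from the separate unit-root sequence. Choosing any square root, put $z=\sqrt{-w_n}\in\D$ and degree $2n$; the central sections then attain the same values. So no smaller uniform half-angle works.

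There is no genuine obstacle here. The only points to check carefully are the bookkeeping that odd degrees add nothing new and that $w\mapsto -z^2$ is surjective onto $\D$. Both are clear from the explicit series. The factor-of-two enlargement of the range relative to Theorem~\ref{thm:gegenbauer} comes precisely from the exponent $\lambda$ appearing here instead of $2\lambda$.
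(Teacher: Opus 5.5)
Your proposal is correct and is essentially the paper's proof: the identity $P_n^\lambda(0,z)=s_{\lfloor n/2\rfloor}^\lambda(-z^2)$, surjectivity of $z\mapsto-z^2$ on $\D$, and the fact that every section index occurs reduce both the classification and the angular sharpness to Lemma~\ref{lem:uncorrected-binomial}. Your explicit transfer of the critical sequences, using degree $2n$ and a square root of $-w_n$, spells out the step the paper leaves implicit.
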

\begin{proof}
At $x=0$, \eqref{eq:gegenbauer-generating} gives the exact identity
\[
 P_n^\lambda(0,z)=s_{\lfloor n/2\rfloor}^\lambda(-z^2).
\]
The map $z\mapsto-z^2$ maps $\D$ onto $\D$, and every index of
a binomial section occurs. Lemma~\ref{lem:uncorrected-binomial}
therefore gives both the classification and its angular sharpness.
\end{proof}

The central specialization doubles the admissible Gegenbauer
parameter relative to the bound uniform in $x$. It is a direct
degeneration of the generating function, not another independent
conjecture. The theorem with all interior $x$ also does not determine
the optimal parameter for each fixed interior $x\ne0$.
Determining that parameter, or a sharp bound uniform on
$|x|\leq b<1$, is a separate question.
The new sections above use no further certified numerical
inequalities: their sufficient statements follow from
Theorem~\ref{thm:main} and classical convolution theory, while
their necessary statements and angular lower bounds follow from
the explicit kernel and interior-scaling arguments.

\appendix

\section{Integral enclosures and the certificate protocol}
\label{app:roots}

For real $\mu\geq0$ and $a>0$, the two regular integrals
\eqref{eq:CS} have the convergent expansions
\begin{equation}\label{eq:CSseries}
\begin{aligned}
 C_0(\mu,a)&=\sum_{j=1}^\infty
        \frac{(-1)^j a^{2j}}{(2j)!(\mu+2j)},\\
 S_0(\mu,a)&=\sum_{j=0}^\infty
        \frac{(-1)^j a^{2j+1}}{(2j+1)!(\mu+2j+1)} .
\end{aligned}
\end{equation}
The acceptance checks retain $j\leq24$. The respective
absolute errors are at most
\begin{equation}\label{eq:CSerrors}
 \frac{a^{50}}{50!(\mu+50)},\qquad
 \frac{a^{51}}{51!(\mu+51)} .
\end{equation}
These follow from the real-variable Taylor remainders for
cosine and sine and integration against $t^{\mu-1}$.
Differentiation in $\mu$ adds a factor $\log t$ to the
integral, so the derivative error bounds are respectively
\[
 \frac{a^{50}}{50!(\mu+50)^2},\qquad
 \frac{a^{51}}{51!(\mu+51)^2}.
\]
All remainder bounds are evaluated with the same complete
parameter interval as the finite sum.

For strictly positive $\mu$, the alternative root check
uses
\begin{equation}\label{eq:rootexp}
\begin{split}
 \frac{\mu I(\mu,\rho)}{a^\mu}
   ={}&-\sin(\rho\pi)
      +\mu\sum_{k=1}^{63}
         \frac{a^k\sin(k\pi/2-\rho\pi)}{k!(\mu+k)}
      +\mathcal E,\\
 &|\mathcal E|\leq\frac{\mu a^{64}}{64!(\mu+64)} .
\end{split}
\end{equation}
The complex exponential has derivatives of absolute value
$1$ on the real axis, giving the last remainder bound.
Thus this check and \eqref{eq:CSseries} enclose the same
integral through different finite expressions.

Here is the acceptance protocol used by the verifiers.
A parameter certificate stores rational intervals $[l,h]$,
endpoint root brackets, and rational normalized angle
intervals $[\alpha,\beta]$ representing
$[\alpha\pi,\beta\pi]$.
The verifier first sorts the parameter intervals and
checks that they have positive lengths, adjacent endpoints
agree exactly, and their union has the stated endpoints.
It performs the same tests for each angle cover.
It checks the strict signs at the two root-bracket endpoints
by \eqref{eq:rootexp}, or by \eqref{eq:F} at the removable
endpoint. Lemma~\ref{lem:root} encloses the interior root
curve; Proposition~\ref{prop:ratio} gives \eqref{eq:qbox}
when a root-ratio enclosure is needed.

The verifier then computes an interval lower bound for the
explicit envelope or prefix on the resulting full rectangle.
Each rational number is converted by enclosing division
of its integer numerator and denominator.
Every arithmetic operation and elementary or special
function is an enclosing ball operation.
The explicit Taylor error ball is added before testing a
strict sign. If an alternative expression loses too much
precision through interval dependence, the verifier bisects
an angle or parameter cell and checks both children.
Any new parameter endpoint receives a new strict root
bracket before use. This terminates on the supplied data
with the cell counts reported in the text.
No stored decimal margin is used as evidence for acceptance.

\section{Uniform bounds at the small-parameter endpoint}
\label{app:small}

This appendix gives a second verification of the entire
endpoint region in Proposition~\ref{prop:certsmall} using
only constant inequalities.
Let
\[
 d_0=10^{-5},\quad q_{\rm lo}=\frac{212}{125},\quad
 q_{\rm hi}=\frac{1697}{1000},\quad
 v=q_{\rm hi}d_0,\quad w=\frac2\pi,\quad N=64,
\]
and put $g_0=e^{-\gamma v}$, $s_0=(q_{\rm hi}-1)d_0$.
Log-convexity of the gamma function gives
$g_0\leq\ga(1+\nu)\leq1$ for $0\leq\nu\leq v$.
The upper bound uses its values at $1$ and $2$; the lower
bound uses the tangent at $1$.
Define
\begin{align*}
 A_0&=g_0(1-1/q_{\rm lo})
       \cos(\pi d_0+s_0\pi/4)\sinc(s_0\pi/4),\\
 V_0&=\pi(1/q_{\rm lo}-1/2)
                     +(1-1/q_{\rm hi})\pi/2,\\
 B_0&=\frac{\pi(1/q_{\rm lo}-1/2)
                        +\pi^{-(1-v)}}{1-v},\\
 f_0&=1+\frac1{6(N+1)},\\
 \mathcal E_0(X,Y)&=
       \frac{1/(4w)+1/(3w^2)}{NX^{1-v}}
                  +\frac{f_0\pi v}{Y^{2-v}} .
\end{align*}
These constants give the right-envelope bound
\begin{equation}\label{eq:smallconstantR}
\begin{split}
 G_R\geq{}&
 A_0-\frac{V_0\pi}{12w^v}
       -\frac{B_0d_0^2\pi}{4w}
       -\frac{(1+d_0)d_0}{\pi^{2-v}w}
       -\mathcal E_0(2\pi/3,2\pi/3)\\
 &>\frac18 .
\end{split}
\end{equation}
To see this, the first and fourth terms in
\eqref{eq:GRstable} are bounded together by
$(A_0-V_0\pi/(12w^v))/u$. The numerator is positive,
so this is at least the same numerator.
Also $b_R\geq\pi$, $p\geq1-v$, whence
$|\widetilde M_R|\leq B_0$. Bound the possible negative
second term by $B_0d_0^2\pi/(4w)$.
The positive-tail loss is at most the fourth term on the
right of \eqref{eq:smallconstantR}; its factor $1+d_0$
is a harmless enlargement. Finally
$X_R,Y_R\geq2\pi/3>1$, so $\mathcal E_0$ bounds both
discrete losses.
The numerical lower endpoint of the full constant
expression is greater than $0.1586$.

For the left side put
\[
 b_0=(1-d_0)\frac{3\pi}{2},\qquad
 m_0=g_0\frac{\pi}{2q_{\rm hi}}
                  \sinc(v\pi/2)w-b_0^{-(1-v)} .
\]
One has $b_L\geq b_0$,
$V_L\geq\pi/(2q_{\rm hi})$,
$\sinc(\nu V_L)\geq\sinc(v\pi/2)$, and $u^p\geq u\geq w$.
Thus the numerator in \eqref{eq:MLstable} is at least $m_0$.
The constant $m_0$ is positive; as $p\leq1$, $M_L\geq m_0$.
Consequently
\begin{equation}\label{eq:smallconstantL}
 G_L\geq m_0-
       \mathcal E_0\left(\frac{2N\pi}{N+1},2\pi\right)
                    >\frac13 .
\end{equation}
The verified lower endpoint is greater than $0.3739$.
The function \file{small_endpoint_analytic} in
\file{check_extension.py} evaluates
\eqref{eq:smallconstantR} and \eqref{eq:smallconstantL}
with enclosing ball arithmetic, and separately rechecks
the root-ratio signs in \eqref{eq:smallq}.

\end{document}